\documentclass[10pt,a4paper]{article}
\usepackage{graphics,graphicx}
\usepackage{amssymb,amsmath}
\usepackage{enumitem}

\topmargin -0.25in
\headheight 0in
\headsep 0in
\oddsidemargin 0in

\evensidemargin 0in

\textwidth 6.25in

\textheight 9.25in
\usepackage[capitalize]{cleveref}

\parskip=2pt

\usepackage{color}

\newtheorem{thm}{Theorem}

\newtheorem{problem}{Problem}

\usepackage{thmtools}
\newlist{lemmalist}{enumerate}{1}
\setlist[lemmalist]{label=(\roman{lemmalisti}),
	ref=\thelemma:$(\roman{lemmalisti})$,
	noitemsep}
\declaretheorem[
name=Lemma]{lemma}

\Crefname{lemmalist}{Lemma}{Lemmas}
\addtotheorempostheadhook[lemma]{\crefalias{lemmalisti}{lemmalist}}

\newtheorem{claim}{Claim}[thm]
\newtheorem{claim1}{}[lemma]
\Crefname{claim}{Claim}{Claims}
\addtotheorempostheadhook[claim]{\crefalias{claimi}{claim}}

\def \no {\noindent}
 
 \def \sm {\setminus}
 \def \es {\emptyset}

\newenvironment{proof}[1][]%
{\noindent {\setcounter{equation}{0}\it Proof.
}{#1}{}}{\hfill$\Box$\vspace{0ex}}

\newenvironment{proof2}[1][]%
{\noindent{\setcounter{equation}{0}\it Proof.
}{#1}{}}{$\diamond$\vspace{0ex}}


\newcommand{\res}{P_2+P_3}

\begin{document}
\title{Optimal chromatic bound for  ($P_2+P_3$,  $\overline{P_2+ P_3}$)-free graphs}

\author{ Arnab Char\thanks{Computer Science Unit, Indian Statistical
Institute, Chennai Centre, Chennai 600029, India. } \and T.~Karthick\thanks{Corresponding author, Computer Science Unit, Indian Statistical
Institute, Chennai Centre, Chennai 600029, India. Email: karthick@isichennai.res.in }}

\date{\today}

\maketitle

\begin{abstract}For a graph $G$, let $\chi(G)$ ($\omega(G)$)
denote its chromatic (clique) number.  A $P_2+P_3$ is the graph obtained by taking the disjoint union of a two-vertex path $P_2$ and a three-vertex path $P_3$. A $\overline{P_2+P_3}$ is the complement graph of  a $P_2+P_3$.
In this paper, we study the class of  ($P_2+P_3$, $\overline{P_2+P_3}$)-free graphs and show that every such graph $G$ with
$\omega(G)\geq 3$ satisfies  $\chi(G)\leq \max \{\omega(G)+3, \lfloor\frac{3}{2} \omega(G) \rfloor-1 \}$.  Moreover, the bound is tight. Indeed, for any $k\in {\mathbb N}$ and $k\geq 3$, there  is a ($P_2+P_3$, $\overline{P_2+P_3}$)-free graph $G$  such that $\omega(G)=k$ and $\chi(G)=\max\{k+3, \lfloor\frac{3}{2} k \rfloor-1 \}$.
\end{abstract}

\section{Introduction}
All our graphs are simple, finite and undirected. For general graph theoretic notation and terminology which are not defined here, we follow  West~\cite{West}.
 As usual, let $P_\ell$, $C_{\ell}$ and $K_\ell$ respectively denote the chordless path, chordless cycle and the complete graph on $\ell$ vertices. For a graph $G$, let $\overline{G}$ denote the complement graph of $G$. Given a class of graphs $\cal C$,  $\overline{\cal C}$ is the class of graphs $\{\overline{G}\mid G\in \cal C\}$.   A graph class ${\cal C}$ is \emph{hereditary} if $G\in {\cal C}$, and if $H$ is an induced subgraph of $G$, then $H\in {\cal C}$. Let $G$ be graph with vertex-set $V(G)$ and edge-set $E(G)$.  For $X\subseteq V(G)$, we write $G[X]$ and $G-X$ to denote the subgraph of $G$ induced on $X$ and $V(G)\sm X$ respectively. We say that $X$ induces a graph $H$ if $G[X]$ is isomorphic to $H$.
 For any two disjoint subsets $X$ and $Y$ of $V(G)$, we say that $X$ is
\emph{complete}   to $Y$  if every vertex in $X$ is adjacent   to every vertex in $Y$; and $X$ is \emph{anticomplete} to $Y$ if there is no edge in $G$ with one end in $X$ and the other in $Y.$
For two vertex disjoint  graphs $G$ and $H$,   $G+H$ is the disjoint union of $G$ and $H$, and $G\vee H$
is the graph obtained from $G+H$ and such that $V(G)$ is complete to $V(H)$.
 We say that a graph $G$ \emph{contains} a graph $H$ if $G$ has an induced subgraph which is isomorphic to $H$.
A graph is ($H_1, H_2,\ldots, H_k$)-free  if it does not contain any graph in $\{H_1, H_2, \ldots, H_k\}$.
Note that a graph  $G$ is ($H_1, H_2,\ldots, H_k$)-free if and only if   $\overline{G}$ is $(\overline{H_1}, \overline{H_2},\ldots, \overline{H_k})$-free.

In a graph $G$, a \emph{clique}  is a set of mutually adjacent  vertices in $G$, and a  \emph{stable set} is a set of mutually nonadjacent  vertices in $G$.
As usual, for a graph $G$, we write $\chi(G)$  to denote the  chromatic number  of $G$,  $\omega(G)$ to denote the clique number of $G$ (the size of a maximum clique in $G$),  $\theta(G)$ to denote the clique covering number of $G$ (the minimum number of (disjoint) cliques needed to cover the vertices of $G$), and $\alpha(G)$ to denote the stability number  of $G$ (the size of a maximum stable set in $G$).
 Clearly, for any graph $G$,  $\chi(G) =\theta(\overline{G})$ and $\omega(G)=\alpha(\overline{G})$. A graph $G$ is \emph{perfect} if every induced subgraph $H$ of $G$ satisfies $\chi(H) =\omega(H)$ or $\theta(\overline{H})=\alpha(\overline{H})$.

  In the following, by a function $f$, we mean a function $f:{\mathbb{N}}\rightarrow {\mathbb{N}}$ with $f(1)=1$ and $f(x)\geq x$ for all $x\in {\mathbb{N}}$, where $\mathbb{N}$ is the set of natural numbers. A hereditary class of graphs $\cal C$  is \emph{$\chi$-bounded}, if there is a function $f$  such that every $G\in {\cal C}$ satisfies $\chi(G)\leq f(\omega(G))$; here $f$ is  called a \emph{$\chi$-binding function} for $\cal C$. We say that a function $f$ is the \emph{optimal} $\chi$-binding function for a $\chi$-bounded class of graphs $\mathcal{C}$ if for each $\ell \in \mathbb{N}$, there is a graph $G\in \mathcal{C}$ such that $\omega(G)=\ell$ and $\chi(G)=f(\ell)$.   Likewise, a hereditary class of graphs $\cal C$  is \emph{$\theta$-bounded}, if there is a function $f$  (called a {\it $\theta$-binding function}) such that every $G\in {\cal C}$ satisfies  $\theta(G)\leq f(\alpha(G))$.  Clearly, a function $f$ is a $\chi$-binding function for $\cal C$ if and only if $f$ is a $\theta$-binding function
for $\overline{\cal C}$, and $\cal C$ is $\chi$-bounded if and only if $\overline{\cal C}$ is $\theta$-bounded.

In this paper, we are interested in some self-complementary class of $\chi$-bounded graphs.
A class of graphs $\cal C$ is said to be {\it self-complementary class} if $\cal C = \overline{\cal C}$.   A self-complementary hereditary class of graphs $\cal C$ is $\chi$-bounded  if and only if $\cal C$ is $\theta$-bounded. In particular, if $\cal C$ is $\chi$-bounded, then the optimal $\chi$-binding function of $\cal C$ is  same as the optimal $\theta$-binding function of $\cal C$. For instance, by a result of Lov\'asz \cite{LL}, the class of perfect graphs is a self-complementary class of $\chi$-bounded graphs with $f(x)=x$ as the optimal $\chi$-binding function.

 The notions of $\chi$-bounded and $\theta$-bounded   classes of graphs were introduced in a seminal work of Gy\'arf\'as~\cite{Gy87}, and they have received   a wide attention since then; see~\cite{RS-survey,ScottSey-Survey}.  Among other conjectures and problems, Gy\'arf\'as~\cite{Gy87}   proposed the following.

 \begin{problem}[\cite{Gy87}]\label{mainprob}
For a fixed forest $F$, assuming that the class of  ($F, \overline{F}$)-free graphs $\cal F$ is $\chi$-bounded, what is the optimal $\chi$-binding function for $\cal F$?
\end{problem}

 \begin{figure}[t]
\centering
 \includegraphics[height=1.5cm,width=7cm]{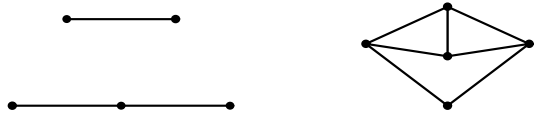}
\caption{A  $P_2+ P_3$  and a $\overline{P_2+ P_3}$ (left to right).}\label{fig:p2p3}
\end{figure}

\cref{mainprob} is open and seems to be hard even when $F$ is  a simple type of forest.
 So it is interesting to look at some special cases, in particular, when $F$ is a   forest on at most five vertices.
 While the optimal $\chi$-binding functions for  classes of ($F, \overline{F}$)-free graphs are known when $F$ is a forest on at most four vertices, except when $F = \overline{K_4}$ \cite{Gy87}, only three classes of graphs  were studied for \cref{mainprob} when $F$ is a five-vertex forest.  Fouquet et al. \cite{FGMT95} showed  that every ($P_5, \overline{P_5}$)-free graph $G$ satisfies $\chi(G)\leq \binom{\omega(G)+1}{2}$, and that there are ($P_5, \overline{P_5}$)-free graphs $G$ with $\chi(G)\geq  \omega(G)^k$, where $k= \log_25- 1$. The second author with Maffray \cite{KMa182} showed that every ($P_4+P_1, \overline{P_4+P_1}$)-free graph $G$ satisfies $\chi(G)\leq \lceil\frac{5\omega(G)}{4}\rceil$, and that the bound is tight.  Recently, Chudnovsky et al. \cite{CCS} showed that every  (fork, anti-fork)-free graph $G$ satisfies $\chi(G)\leq 2\omega(G)$, and that the bound is asymptotically tight. Thus, \cref{mainprob} is open for seven  pairwise nonisomorphic  forests on five vertices, as there are  ten such forests. In this paper, we study the class of  ($P_2+ P_3$, $\overline{P_2+ P_3}$)-free graphs; see Figure~\ref{fig:p2p3}. Randerath et al.~\cite{RST02} showed that every ($P_2+P_3$,  $\overline{P_2+ P_3}$)-free graph $G$ with $\omega(G)=2$ satisfies $\chi(G)\leq 4$, and the well-known Myceilski's $4$-chromatic graph shows that the bound is tight. However, no optimal $\chi$-binding function is known for the class of ($P_2+ P_3$, $\overline{P_2+ P_3}$)-free graphs in general.
  Here, we prove the following.

\begin{thm}\label{thm:p2p3-bnd}
 Every ($P_2+ P_3$, $\overline{P_2+ P_3}$)-free graph $G$ with $\omega(G)\geq 3$ satisfies  $\chi(G)\leq \max \{\omega(G)+3,$ $  \lfloor\frac{3\omega(G)}{2}\rfloor-1 \}.$
\end{thm}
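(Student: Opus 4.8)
\noindent The plan combines a clique-cutset reduction, the dichotomy ``perfect or contains an induced $C_5$'', and a structural analysis centred on a fixed induced $C_5$.

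\emph{Reductions and a dichotomy.} Since both $\chi$ and $\omega$ equal the maximum of their values over the blocks of a clique-cutset decomposition, and the function $\omega\mapsto\max\{\omega+3,\lfloor\frac{3\omega}{2}\rfloor-1\}$ is nondecreasing, it suffices to prove the bound for graphs with no clique cutset; one may also assume $G$ (and hence $\overline G$) is connected. Now for every integer $\ell\ge 7$ the cycle $C_\ell$ contains an induced $P_2+P_3$ (take the edge $v_1v_2$ together with the path $v_4v_5v_6$), so a $(P_2+P_3,\overline{P_2+P_3})$-free graph contains no hole and no antihole of length at least $7$. By the Strong Perfect Graph Theorem, such a graph is therefore either perfect -- in which case $\chi(G)=\omega(G)<\omega(G)+3$ and we are done -- or it contains an induced $C_5$ (an antihole $C_5$ of $\overline G$ is an induced $C_5$ of $G$ since $\overline{C_5}\cong C_5$). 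So assume henceforth that $G$ has no clique cutset and contains an induced $C_5$ on $v_1v_2v_3v_4v_5$, indices read modulo $5$.

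\emph{Structure around the $C_5$.} Put $C=\{v_1,\dots,v_5\}$ and partition $V(G)\setminus C$ according to each vertex's neighbourhood on $C$; up to the dihedral symmetry of $C$ the possible types are: anticomplete to $C$; a single $v_i$; two consecutive $v_i,v_{i+1}$; two nonconsecutive $v_i,v_{i+2}$; three consecutive; three with one gap; four; all of $C$. The hypotheses restrict these severely: the set anticomplete to $C$ is stable (an edge there plus a $P_3$ of $C$ is a $P_2+P_3$) and, dually, the set complete to $C$ is a clique; each ``private'' class $\{x:N(x)\cap C=\{v_i\}\}$ is stable and anticomplete to the classes of its neighbours $v_{i\pm1}$; and, more generally, whenever a vertex of one class has ``too few'' neighbours in a ``far'' class one finds an induced $P_2+P_3$, while by the complementary argument having ``too many'' neighbours there produces a $\overline{P_2+P_3}$. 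I would keep pushing these constraints, using also that $G$ has no clique cutset and that $\overline G$ is connected, until $V(G)$ is covered by a bounded number of well-understood parts -- roughly: a ``$C_5$-pattern'' part (the $C_5$ together with the classes attaching to consecutive pairs), at most one large clique, and a bounded sporadic remainder -- with the adjacency between parts tightly controlled; symmetry under complementation should roughly halve this work.

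\emph{Colouring, and the main obstacle.} Given such a decomposition, one colours the $C_5$-pattern part, the clique, and the remainder separately and then merges colour classes across mutually anticomplete parts. The five vertices of $C$ together with a bounded number of sporadic classes supply the additive $+3$, which is the larger term for $3\le\omega\le 9$; for large $\omega$ the worst case is one in which, after the structural reduction, the graph is built from overlapping near-cliques whose colour sets can be shared only partially, forcing about $\frac32\omega$ colours and no more, which yields the $\lfloor\frac{3\omega}{2}\rfloor-1$ term. Tracking the precise count in each branch gives $\chi(G)\le\max\{\omega(G)+3,\lfloor\frac{3\omega(G)}{2}\rfloor-1\}$, and matching $(P_2+P_3,\overline{P_2+P_3})$-free constructions (one family for each regime, built around an induced $C_5$) show that this is best possible. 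The difficulty is entirely in the structural step: both the forbidden graph and its complement have only five vertices, so a large number of vertex-types around the $C_5$ can occur, and controlling (i) the adjacencies between vertices of different types, (ii) the stable set anticomplete to $C$ and the clique complete to $C$, and (iii) homogeneous sets and a number of small exceptional configurations, requires a long and carefully organised case analysis. Once the structure is in hand the colouring is essentially bookkeeping, but one must verify in every branch that the count never exceeds $\max\{\omega+3,\lfloor\frac{3\omega}{2}\rfloor-1\}$, which is where the exact form of the bound is genuinely used.
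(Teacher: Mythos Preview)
Your proposal is a plan, not a proof: the decisive structural step is only promised (``I would keep pushing these constraints\ldots''), and the subsequent colouring is described as ``essentially bookkeeping'' without any of the bookkeeping actually carried out. In particular, nothing in the write-up establishes the claimed decomposition of $V(G)$ around the $C_5$, nor the bound $\chi(G)\le\max\{\omega(G)+3,\lfloor\tfrac{3}{2}\omega(G)\rfloor-1\}$ in any single branch. The dichotomy via the Strong Perfect Graph Theorem is correct (holes and antiholes of length $\ge 7$ contain $P_2+P_3$, and $\overline{C_5}\cong C_5$), but that is where the rigorous content stops.

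The paper's argument is organised differently, and the difference matters. Instead of pivoting on an induced $C_5$, the paper pivots on an induced $C_4$. If $G$ is $C_4$-free then, being $(P_2+P_3)$-free, it is in particular $(P_6,C_4)$-free, and the cited bound $\chi\le\lceil\tfrac{5}{4}\omega\rceil$ for that class already gives the desired inequality. If $G$ contains a $C_4$, the paper proves a structure theorem (\cref{thm:structure}): $G$ is a \emph{good} graph, meaning it has a universal vertex, a comparable pair, a vertex of small degree, three stable sets whose removal drops $\omega$ by at least $2$, or $\chi\le\omega+3$ outright. Each of these properties transparently yields the bound by a short induction (\cref{lem:pec,lem:col:nice-ver,lem:col-nice}). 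The structure theorem itself is proved by a chain of cases keyed not on the vertex-types around the $C_5$, but on whether $G$ contains, in turn, a $K_{2,3}$, a banner, an $H_2$, an $H_3$; several of these cases are handled in the complement. Your $C_5$-centred plan would have to redo all of this from scratch, with more vertex-types around the five-cycle than around the four-cycle, and without the ready-made $(P_6,C_4)$-free result to dispose of one side of the dichotomy.
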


\begin{thm}\label{thm:p2p3-bndopt}
 For every $k\in \mathbb{N}$ and $k\geq 3$, there is a graph $(P_2+P_3,\overline{P_2+P_3})$-free $G$ such that $\omega(G)=k$ and $\chi(G)=\max\{k+3,\lfloor \frac{3}{2}k\rfloor-1\}$.
\end{thm}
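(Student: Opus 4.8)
The plan is to exhibit, for each $k\ge 3$, a concrete graph $G_k$ and to verify three things: $\omega(G_k)=k$, $\chi(G_k)=\max\{k+3,\lfloor\tfrac32k\rfloor-1\}$, and $G_k$ contains no induced $P_2+P_3$ and no induced $\overline{P_2+P_3}$. Since the target value has two phases — it equals $k+3$ for $3\le k\le 9$ and $\lfloor\tfrac32k\rfloor-1$ for $k\ge 10$, the two expressions coinciding at $k\in\{8,9\}$ — one naturally builds two families of extremal graphs, one optimal in each phase, overlapping at the seam.

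A convenient first reduction is that if $H$ is $(P_2+P_3,\overline{P_2+P_3})$-free then so is $K_s\vee H$, with $\omega(K_s\vee H)=s+\omega(H)$ and $\chi(K_s\vee H)=s+\chi(H)$. (For $P_2+P_3$: an induced copy is disconnected, hence disjoint from the universal set $K_s$, hence contained in $H$. For $\overline{P_2+P_3}$: $\overline{K_s\vee H}=\overline{K_s}+\overline{H}$, and since both components of $P_2+P_3$ are connected while $\overline{K_s}$ is edgeless, an induced copy there has both its $P_3$-part and its $P_2$-part inside $\overline{H}$.) Using this, the first family reduces to producing one core graph $B$ in the class with $\omega(B)=3$ and $\chi(B)=6$ — then $G_k:=K_{k-3}\vee B$ settles $3\le k\le 9$ — and the second reduces to producing, for each $q\ge 5$, a core $B_q$ in the class with $\omega(B_q)=2q$ and $\chi(B_q)=3q-1$: then $G_{2q}:=B_q$ and $G_{2q+1}:=K_1\vee B_q$ settle $k\ge 10$, and one checks that $2q+\max\{3,q-1\}$ and $(2q+1)+\max\{3,q-1\}$ agree with $\max\{k+3,\lfloor\tfrac32k\rfloor-1\}$ for every such $k$.

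For each core the verifications go as follows. The clique number is read off as a maximum clique from the combinatorial description of the core. The bound $\chi\le$ target is an explicit optimal colouring. The delicate direction $\chi\ge$ target comes either from the density bound $\chi\ge |V|/\alpha$ once $\alpha$ is computed, or, when that is not tight, from inspecting the possible colour classes and showing fewer colours force a monochromatic conflict. Finally, to show a core is $(P_2+P_3,\overline{P_2+P_3})$-free, one uses that $P_2+P_3$ and $\overline{P_2+P_3}$ have only five vertices, so there are finitely many ``shapes'' a five-vertex induced subgraph can take relative to the core's structure; each is ruled out in turn.

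I expect two main obstacles. The first is finding the cores: because the chromatic number must land exactly on the \cref{thm:p2p3-bnd} bound, the cores are rigid, and the obvious candidates fail — an iterated Mycielskian of a clique leaves the class already at the second iteration (since $M(K_r)-N[u]$ already contains an induced $P_3$, which forces an induced $P_2+P_3$ inside $M(M(K_r))$), and a clique blow-up of $C_5$ attains chromatic surplus only about $\omega/4$, well short of the required $\max\{3,\lfloor\omega/2\rfloor-1\}$. The second obstacle is the combination of the sharp lower bound $\chi\ge$ target with the $(P_2+P_3,\overline{P_2+P_3})$-freeness check for the cores; this is where the real bookkeeping lies, and presumably it dovetails with the case analysis that proves \cref{thm:p2p3-bnd}.
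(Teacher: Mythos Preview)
Your strategic outline is correct and in fact coincides with the paper's approach: reduce via $K_s\vee(\cdot)$ to a single small-$k$ core with $\omega=3$, $\chi=6$, and to a family of large-$k$ cores hitting $\lfloor\tfrac32 k\rfloor-1$; then verify class membership and the chromatic/clique values. The closure of the class under $K_s\vee(\cdot)$ is also argued correctly.

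The gap is that you never actually produce the cores --- you explicitly flag ``finding the cores'' as an outstanding obstacle, so what you have is a plan, not a proof. The paper supplies both constructions explicitly. For the small-$k$ core it takes $B=\overline{Q}$, the complement of the $16$-regular Schl\"afli graph on $27$ vertices, which is $(P_2+P_3,\overline{P_2+P_3})$-free with $\omega(\overline{Q})=3$ and $\chi(\overline{Q})=6$; joins with $K_t$ then cover $3\le k\le 9$ (the paper also lists $Q$ itself and the complement of the Clebsch graph, but $\overline{Q}\vee K_{k-3}$ alone already realises your scheme). For the large-$k$ cores it defines, for each $\ell\ge 2$, a graph $G_\ell$ on $3\ell$ vertices consisting of three $\ell$-cliques $Q_1=\{a_i\}$, $Q_2=\{b_i\}$, $S=\{s_i\}$ with $a_ib_i\in E$, $a_i$ anticomplete to $Q_2\setminus\{b_i\}$, and $s_i$ complete to $(Q_1\cup Q_2)\setminus\{a_i,b_i\}$ and nonadjacent to $a_i,b_i$. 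One checks $\alpha(G_\ell)=2$ and $\omega(G_\ell)=\ell+1$; the lower bound $\chi(G_\ell)\ge\lceil 3\ell/2\rceil$ is then just $|V|/\alpha$, and the matching upper bound is an explicit colouring (cited from \cite{HK21}). Taking $\ell=k-1$ gives $\omega=k$ and $\chi=\lceil 3(k-1)/2\rceil=\lfloor 3k/2\rfloor-1$, which is exactly your $B_q$ when $k=2q$ and $K_1\vee B_q$ when $k=2q+1$.

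So your framework is right; to turn it into a proof you need to name these two constructions and carry out the (short) verifications. The $\alpha=2$ family is the non-obvious ingredient you were missing for large $k$, and the Schl\"afli graph is the non-obvious ingredient for small $k$.
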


Thus,  the function $g:{\mathbb{N}}\rightarrow {\mathbb{N}}$ defined by $g(1)=1$, $g(2)=4$, and $g(x)= \max\{x+3,\lfloor \frac{3}{2}x\rfloor-1\}$, for  $x\geq 3$, is the optimal $\chi$-binding (or $\theta$-binding) function for the class of ($P_2+P_3$,  $\overline{P_2+ P_3}$)-free graphs.
We give a proof of  \cref{thm:p2p3-bnd} in the final section.  To prove  \cref{thm:p2p3-bndopt},    consider the following ($P_2+ P_3$, $\overline{P_2+ P_3}$)-free graphs, where $t\in \mathbb{N}$:

\vspace{-0.25cm}
\begin{enumerate}[leftmargin=1.2cm, label=(\Roman*),series=edu*]\itemsep=0pt
\item Let $Q$ be the  16-regular {\it Schl\"afli graph} on  27 vertices. Then:

\vspace{-0.25cm}
\begin{itemize}\itemsep=0pt
\item $\chi(Q)=9$ and $\omega(Q)=6$.
\item $\chi(\overline{Q})=6$ and $\omega(\overline{Q})=3$.
\item $\chi(Q\vee K_t)=t+9$ and $\omega(Q\vee K_t)=t+6$.
\item $\chi(\overline{Q}\vee K_t)=t+6$ and $\omega(\overline{Q}\vee K_t)=t+3$.
\end{itemize}

\item Let $H$ be the complement of the \emph{Clebsch graph}   on 16 vertices.  Then:

\vspace{-0.25cm}
\begin{itemize}\itemsep=0pt
\item $\chi(H)=8$ and $\omega(H)=5$.
\item  If $G = H-v$, for any $v\in V(H)$, then   $\chi(G)=8$ and $\omega(G)=5$.
\item  $\chi(H\vee K_t)=t+8$ and $\omega(H\vee K_t)=t+5$.
\end{itemize}
\end{enumerate}
We refer to \cite{CS-schlafli} for a precise definition  of the   Schl\"afli graph and  its properties. It is interesting to note that the set of neighbours of any vertex in the 16-regular Schl\"afli graph on  27 vertices induces the complement of the Clebsch graph on 16 vertices.

\begin{enumerate}[leftmargin=1cm, label=(\Roman*), resume=edu*]
\item For any fixed integer $k\ge 2$, let $G_k$ be the graph  defined as follows:

\vspace{-0.25cm}
\begin{itemize}[label=$\circ$]\itemsep=0pt
\item $V(G_k)=Q_1\cup Q_2\cup S$, where  $Q_1:=\{a_1,a_2,\ldots, a_k\}$, $Q_2:=\{b_1,b_2,\ldots,b_k\}$,
and $S:=\{s_1,s_2,$ $\ldots,s_k\}$ are cliques.

\item For each $i\in \{1,2,\ldots, k\}$, $a_i$ is adjacent to $b_i$, and $\{a_i\}$ is anticomplete to $Q_2\sm \{b_i\}$.
\item  For each $i\in \{1,2,\ldots, k\}$, $\{s_i\}$ is anticomplete to $\{a_i,b_i\}$, and  complete to $(Q_1\cup Q_2) \sm \{a_{i}, b_{i}\}$.
\item No other edges in $G$.
\end{itemize}
Then,  for any fixed integer $k\ge 2$,  $G_k$ is  $(P_2+P_3,\overline{P_2+P_3})$-free, $|V(G_k)|=3k$, $\omega(G_k)=k+1$, and  $\alpha(G_k)=2$; see also \cite{HK21}. Moreover, by Lemma~4 of \cite{HK21}, we have $\chi(G_k)\leq \lceil \frac{3}{2}k\rceil$.

\end{enumerate}

\smallskip
\no{\bf Proof of  \cref{thm:p2p3-bndopt}}. Clearly from   examples given above in (I) and (II), we may assume that $k\geq 10$; so $\max\{k+3,\lfloor \frac{3}{2}k\rfloor-1\} = \lfloor \frac{3}{2}k\rfloor-1$. Now, we show that $G_{k-1}$ is our desired graph $G$. Let $k-1=\ell$. Then $\omega(G_{\ell})=k$. Moreover, since $\chi(G_{\ell})\geq \frac{|V(G_{\ell})|}{\alpha(G_{\ell})}$ and  $\chi(G_{\ell})\leq   \lceil\frac{3}{2}\ell\rceil$, we conclude that $\chi(G_{\ell})=\lfloor \frac{3}{2}k\rfloor-1$. This proves \cref{thm:p2p3-bndopt}. \hfill{$\Box$}

%
%

 \medskip

The proof   of \cref{thm:p2p3-bnd} follows from our result for the class of ($P_2+ P_3$, $\overline{P_2+ P_3}$)-free graphs that contain a $C_4$, given below. To state it, we require some definitions.  A \emph{neighbor} (or \emph{nonneighbor}) of a vertex $v$ in $G$ is a vertex that is adjacent (or nonadjacent) to $v$ in $G$. The set of neighbors of a vertex $v$ in $G$ is denoted by $N_G(v)$, and $|N_G(v)|:=d_G(v)$ is the \emph{degree} of $v$.  We write $\overline{N}_G(v)$ to denote the set $V(G)\sm (N(v)\cup \{v\})$.
 Two  vertices $u$ and $v$ in a graph $G$ are said to be \emph{comparable}, if $u$ and $v$ are nonadjacent, and either $N_G(u)\subseteq N_G(v)$ or $N_G(v)\subseteq N_G(u)$. (We drop the
subscript $G$ when there is no ambiguity.) A vertex in a graph $G$ is a \emph{universal vertex}  if it is adjacent to all other vertices in $G$.
A vertex  $v$ in a graph $G$ is  a \emph{nice vertex} if   $d_G(v)\leq \omega(G)+2$.  We say that a graph $G$ is a \emph{nice} if it has three pairwise disjoint stable sets, say $S_1,S_2$ and $S_3$, such that $\omega(G-(S_1\cup S_2\cup S_3))\leq \omega(G)-2$. We say that a graph $G$ is \emph{good}, if  one of the following holds: $(a)$~$G$ has a pair of comparable vertices. $(b)$~$G$ has a universal vertex.  $(c)$~$G$ has a nice vertex.   $(d)$~$G$ is a nice graph.  $(e)$~$\chi(G)\leq \omega(G)+3$.

\begin{thm}
\label{thm:structure}
If $G$ is a ($P_2+ P_3$, $\overline{P_2+ P_3}$)-free graph that contains a $C_4$, then  $G$ is a good graph.
\end{thm}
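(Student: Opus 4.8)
The plan is to analyze the structure of a $(P_2+P_3, \overline{P_2+P_3})$-free graph $G$ around a fixed induced $C_4$, say with vertices $a_1, b_1, a_2, b_2$ in cyclic order, so that $\{a_1, a_2\}$ and $\{b_1, b_2\}$ are the two non-edges. First I would partition $V(G) \setminus \{a_1,b_1,a_2,b_2\}$ according to adjacency patterns to these four vertices. The key leverage is that $\overline{P_2+P_3}$-freeness forbids many configurations: in particular, for any vertex $v$ outside the $C_4$, the set of its neighbours (and non-neighbours) in the $C_4$ is heavily constrained, because a $\overline{P_2+P_3}$ is just a $K_1 \vee (K_2 + \overline{K_2})$, i.e. a vertex complete to the remaining four, among which there is exactly one non-edge... more usefully, $\overline{P_2+P_3} = \overline{P_2} \vee \overline{P_3}$ where $\overline{P_2}$ has one non-edge and $\overline{P_3}$ is $K_1 + K_2$. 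So the standard approach is: enumerate the possible traces on the $C_4$, group the vertices into types, then use both forbidden induced subgraphs to prove that adjacency between different types is essentially ``complete or anticomplete'' (homogeneous), reducing $G$ to a blow-up-like structure with a bounded skeleton.

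The key steps, in order, are: (1) Set up the types $T_X$ for $X \subseteq \{a_1,b_1,a_2,b_2\}$ that actually occur; eliminate many types immediately using $P_2+P_3$-freeness (a vertex anticomplete to an edge of the $C_4$ and seeing part of the opposite $P_3$ creates a $P_2+P_3$) and $\overline{P_2+P_3}$-freeness dually. (2) Show that the ``dominating'' types — vertices complete to the $C_4$, or complete to a triangle contained in $N(\text{some }C_4\text{-vertex})$ — either give a universal vertex/comparable pair, or are controlled in number. (3) For the surviving low-degree types, show each such vertex has degree at most $\omega(G)+2$, yielding a nice vertex; this is where one pushes on the fact that a vertex $v$ of one of these restricted types has its neighbourhood confined to a union of few cliques whose sizes sum to at most $\omega(G)+2$, using $\overline{P_2+P_3}$-freeness to bound how the neighbourhood of $v$ spreads across the type classes. (4) If none of (a)–(c) holds, extract three stable sets $S_1, S_2, S_3$ — typically two of them coming from the $C_4$ itself (e.g. $\{a_1,a_2\}$ and $\{b_1,b_2\}$, each extended greedily within their type classes) and a third from a well-chosen independent set — such that removing them drops $\omega$ by $2$, giving that $G$ is nice (case (d)). (5) In the leftover case, bound $\chi(G)$ directly by $\omega(G)+3$ by exhibiting an explicit colouring, handing us case (e).

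The main obstacle will be step (3)/(4): controlling the interaction between the ``middle'' type classes — vertices whose trace on the $C_4$ is a single vertex or a single edge — because these can be numerous and their mutual adjacency need not be homogeneous a priori. The delicate part is proving that within such a class the graph is ``small'' (bounded clique number, or covered by two stable sets) once a universal vertex and comparable pairs are excluded; here one must combine $P_2+P_3$-freeness applied to $P_3$'s living inside one type class with $\overline{P_2+P_3}$-freeness applied across two classes, and carefully track that the resulting clique sizes add up to at most $\omega(G)+2$ (for the nice-vertex case) or that removal of three stable sets kills two units of clique number (for the nice-graph case). I expect the proof to split into a moderate number of cases according to which dominating types are present and whether the $C_4$ extends to a larger specific subgraph (e.g. a $C_4$ plus a vertex, or a $W_4$, or a $K_{2,3}$), with the hardest case being when $G$ is ``dense but structured'' — essentially close to a blow-up of a small template such as $C_5$ or the Schläfli/Clebsch-type graphs — where one must argue that the template forces $\chi \le \omega + 3$ by hand.
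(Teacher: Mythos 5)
Your submission is a plan, not a proof: none of the five steps is actually carried out, and the critical structural claims are asserted rather than verified. Beyond that, the central hope of the plan --- that the forbidden pair forces the adjacency between the type classes (the traces on the $C_4$) to be ``essentially complete or anticomplete,'' reducing $G$ to a blow-up of a bounded skeleton --- is false for this class. The paper's own tightness examples $G_k$ (example (III) in the introduction) contain an induced $C_4$ (e.g.\ $a_1,b_1,b_2,a_2$) and exhibit perfect-matching and anti-matching adjacencies between the cliques $Q_1$, $Q_2$ and $S$; relative to that $C_4$ the type classes interact in a genuinely non-homogeneous way, and $|V(G_k)|$ is unbounded. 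So no bounded-skeleton reduction is available, and steps (2)--(5) of your plan cannot be executed as described. What the actual argument does instead is prove a long list of \emph{local} constraints (each vertex of one class has at most one neighbour, or at most one non-neighbour, in another class; certain unions are cliques or stable sets), and then runs a case analysis keyed on whether $G$ contains $K_{2,3}$, a banner, or the graphs $H_2$, $H_3$ --- in several of these cases passing to the complement and bounding $\theta(G)$ by $\alpha(G)+3$ via explicit clique covers, rather than exhibiting a nice vertex or three stable sets directly.

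A second concrete gap: your step (3) claims that the ``surviving low-degree types'' have degree at most $\omega(G)+2$. This is not true without the additional hypotheses introduced by the case split; for instance, a vertex in $A_i$ can have arbitrarily many neighbours in $A_{i\pm1}\cup B_{i\pm1}\cup D$ in general, and the degree bound only emerges after one has established cardinality bounds like $|B_j|\le 1$ or $|X_j|\le 3$, which themselves depend on which of $K_{2,3}$, banner, $H_2$, $H_3$ is excluded. Without identifying those auxiliary forbidden configurations and proving the attendant cardinality lemmas, the nice-vertex and nice-graph cases cannot be closed.
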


The rest of the paper is organized as follows:
 In \cref{genprop}, we prove some  general properties of ($P_2+ P_3$, $\overline{P_2+P_3}$)-free graphs that contain a $C_4$, and use it in the later sections.  \cref{sec:thmstruc} is devoted to the proof of \cref{thm:structure}, and finally in \cref{sec:col}, we give a proof of \cref{thm:p2p3-bnd}.

\section{Properties of ($P_2+ P_3$, $\overline{P_2+P_3}$)-free graphs that contain a $C_4$}
\label{genprop}

Let $G$ be a  ($P_2+ P_3$, $\overline{P_2+P_3}$)-free graph that contains a $C_4$, say with vertex-set $C:=\{v_1,v_2,v_3,v_4\}$ and edge-set $\{v_1v_2,v_2v_3,v_3v_4,v_4v_1\}$.
 For $i\in \{1,2,3,4\}$, let $A_i$ denote the set $\{v\in V(G)\setminus C \mid N(v)\cap C=\{v_i\}\}$, and $B_i$ denote the set $\{v\in V(G)\setminus C \mid N(v)\cap C=\{v_{i},v_{i+1}\}\}$. Also, for $j\in \{1,2\}$, let $X_j$ denote the set $\{v\in V(G)\mid N(v)\cap C=\{v_j,v_{j+2}\}\}$. Moreover, let $D$  and $T$ respectively denote the set of vertices in $G$ which are complete to $C$,  and   anticomplete to $C$.  Let $A:=\cup_{i=1}^4A_i$, $B:=\cup_{i=1}^4B_i$, and $X:=X_1\cup X_2$.  Throughout the paper, our   indices are taken arithmetic modulo $4$ (unless stated otherwise). Since $G$ has no $\overline{P_2+P_3}$, no vertex in $V(G)\sm C$ is adjacent to three vertices in $C$, and hence $V(G)=A\cup B\cup C\cup D\cup X\cup T$. Further, we observe that the following hold:
\newcounter{ici}
\begin{enumerate}[leftmargin=1.25cm, label=(R\arabic*),series=edu*]

\item\label{ATstset}  {\it For  $i\in \{1,2,3,4\}$, $A_i\cup T$ is a stable set.}

\begin{proof2}
If there are adjacent vertices in $A_i\cup T$, say $p$ and $q$, then $\{p,q,v_{i+1},v_{i+2},v_{i+3}\}$ induces a $P_2+P_3$. \end{proof2}

\item\label{a1b1nonnbd} {\it For $i\in \{1,2,3,4\}$, any vertex in $A_i\cup B_i$ can have at most one nonneighbor in $A_{i+2}\cup B_{i+1}$. Likewise, any vertex in $A_{i+1}\cup B_i$ can have at most one nonneighbor in $B_{i-1}\cup A_{i-1}$.}
	
\begin{proof2}	Let  $p\in A_i\cup B_i$. If $p$ has two nonneighbors in $A_{i+2}\cup B_{i+1}$, say $q$ and $r$, then since $\{p,v_i,q,v_{i+2},r\}$ does not induce a $P_2+ P_3$, we may assume that $qr\in E(G)$, and then $\{q,r,p,v_i,v_{i+3}\}$ induces a $P_2+ P_3$.  \end{proof2}

	\item\label{abtnbBD} {\it For $i\in \{1,2,3,4\}$  and for any vertex $p\in A_i\cup B_i\cup T$, $N(p)\cap (D\cup B_{i+2})$ is a clique. Likewise, for any $p\in A_i$,  $N(p)\cap B_{i+1}$ is a clique.  Moreover, for any $p\in A_i$, $|N(p)\cap B_{i+2}|\leq 1$. Likewise, $|N(p)\cap B_{i+1}|\leq 1$.}
	
\begin{proof2}
If there are nonadjacent vertices, say $d_1, d_2\in N(p)\cap (D\cup B_{i+2})$, then $\{p, d_1,v_{i+2},d_2,v_{i+3}\}$ induces a $\overline{P_2+P_3}$, a contradiction. This proves the first assertion. Next, if there are vertices, say $b, b'\in N(p)\cap B_{i+2}$, then, by the first assertion, $\{p, v_i,v_{i+3}, b,b'\}$ induces a $\overline{P_2+P_3}$.   \end{proof2}

\item\label{bibi+2cmnnbd} {\it For $j\in \{1,2\}$, if there are adjacent vertices, say $b\in B_j$ and $b'\in B_{j+2}$, then $N(b)\cap (B_{j+1}\cup B_{j-1}\cup D)= N(b')\cap (B_{j+1}\cup B_{j-1}\cup D)$.}
		
\begin{proof2}		If there is a vertex, say $v\in N(b)\cap (B_{j+1}\cup B_{j-1}\cup D)$ such that $v\notin N(b')\cap (B_{j+1}\cup B_{j-1}\cup D)$, then, up to symmetry, we may assume that $v\in B_{j+1}\cup D$, and then $\{b,v_{j+1},v_{j+2},b',v\}$ induces a $\overline{P_2+P_3}$.  \end{proof2}
	
\item\label{Xstset} {\it For  $j\in\{1,2\}$, $X_j$ is a stable set.}	
	
\begin{proof2}If there are adjacent vertices in $X_j$, say $p$ and $q$,  then  $\{v_j,v_{j+1},v_{j+2},p,q\}$ induces a $\overline{P_2+P_3}$.  \end{proof2}
	
\item\label{BXantcomp}  {\it $B$ is anticomplete to $X$.}

\begin{proof2} By symmetry, it is enough to show that $B_1\cup B_2$ is anticomplete to $X_1$. If there are adjacent vertices, say $b\in B_1\cup B_2$ and $x\in X_1$, then  $\{v_1,v_2,v_3,x,b\}$ induces a $\overline{P_2+P_3}$.  \end{proof2}

 \item\label{Dper} {\it $G[D]$ is ($K_2+K_1$)-free, and hence perfect; and $\chi(G[D])= \omega(G[D])\leq \omega(G)-2$.}

 \begin{proof2} If  there are vertices, say $p,q,r\in D$ such that $\{p,q,r\}$ induces a $K_2+K_1$, then $\{p,q,r, v_2,v_4\}$ induces a $\overline{P_2+P_3}$; so $G[D]$ is ($K_2+K_1$)-free. Since $D$ is complete to $\{v_1,v_2\}$,   $\omega(G[D])\leq \omega(G)-2$, and hence $\chi(G[D]) =\omega(G[D])\leq \omega(G)-2$. \end{proof2}

	\item \label{XDcom} {\it $X$ is complete to $D$.}

\begin{proof2} If there are nonadjacent vertices, say $d\in D$ and $x\in X$, then, we may assume that $x\in X_1$, and then $\{v_1,v_{2},v_{3},x,d\}$ induces a $\overline{P_2+P_3}$. \end{proof2}

	\item\label{a1a3cmnnbda2} {\it For $j,k\in \{1,2\}$ and $j\neq k$, suppose there are adjacent vertices, say $p\in A_j$ and $q\in A_{j+2}$. Then:

\vspace{-0.2cm}
\begin{enumerate}[label=(\alph*)]\itemsep=0pt \item At most one vertex in $A_{j+1}$ is anticomplete to $\{p,q\}$. Likewise,  at most one vertex in $A_{j-1}$  is anticomplete to $\{p,q\}$.
\item  At most one vertex in $X_k$ is complete to $\{p,q\}$.
\item \label{X2DA1A3}  Each vertex of $D\cup X_k$ is adjacent to at least one of $p,q$. \end{enumerate}}

\begin{proof2}	We prove for $j=1$.
$(a)$~If there are vertices, say $r,s\in A_2$, such that $\{r,s\}$ is anticomplete to $\{p,q\}$, then, by \ref{ATstset}, $\{p,q,r,v_2,s\}$ induces a $P_2+P_3$. $(b)$~If there are vertices, say  $x,x'\in X_2$, such that $\{x,x'\}$ is complete to $\{p,q\}$, then, by \ref{Xstset}, $\{p,x,v_2,x',q\}$ induces a $\overline{P_2+P_3}$. $(c)$~If there is a vertex, say $r\in D\cup X_2$ such that  $pr,qr\notin E(G)$, then $\{p,q,v_2,r,v_4\}$ induces a $P_2+P_3$. This proves \ref{a1a3cmnnbda2}.
\end{proof2}
	
	\item\label{a1a2x} {\it For $i\in \{1,2,3,4\}$ and $j\in \{1,2\}$, if there are adjacent vertices, say $p\in A_i$ and $q\in A_{i+1}$, then each vertex of $X_j$ is adjacent to exactly one of $p$ and $q$. }

\begin{proof2} We prove for   $j=1$.		For any $x\in X_1$,  if $px,qx\in E(G)$, then $\{q,x,v_i,v_{i+1},p\}$ induces a $\overline{P_2+P_3}$, and if $px,qx\notin E(G)$, then $\{p,q,x,v_{i+2},v_{i+3}\}$ induces a $P_2+P_3$. \end{proof2}

	\item\label{AXonenbd} {\it For  $j\in\{1,2\}$,   any vertex in $A_j$ has at most one neighbor in $X_j$, and any vertex in $A_{j+2}$ has at most one neighbor in $X_j$.}

\begin{proof2}		If there is a vertex, say $a\in A_j$ which has two neighbors in $X_j$, say $x$ and $x'$, then, by \ref{Xstset}, $\{v_j,x,v_{j+2},x',a\}$ induces a $\overline{P_2+P_3}$. \end{proof2}

\item\label{prop-K23} {\it  For $i\in \{1,2,3,4\}$, the following hold:

\vspace{-0.2cm}
\begin{enumerate}[label=(\alph*)]\itemsep=0pt
\item\label{a1b1b3nbd} If $G$ is $K_{2,3}$-free,  then any vertex in $B_i$ has at most one neighbor in $A_{i-1}\cup B_{i+2}$. Likewise, any vertex in $B_i$ has at most one neighbor in $A_{i+2}\cup B_{i+2}$.
\item\label{A1A3ancom} If $G$ is ($K_2+K_3$)-free, then any vertex in $A_i\cup B_i$ has at most one nonneighbor in $B_{i+1}\cup B_{i+2}$. Likewise, any vertex in $A_{i+1}\cup B_i$ has at most one nonneighbor in $B_{i-1}\cup B_{i+2}$.
\end{enumerate}	}
\begin{proof2}
	$(a)$:~If there is a vertex, say $p\in B_i$ which has two neighbors in $A_{i-1}\cup B_{i+2}$, say $q$ and $r$, then $\{p,v_{i},v_{i-1},q,r\}$ induces   a $K_{2,3}$ or a $\overline{P_2+P_3}$.
$(b)$:~If there is a vertex, say $p\in A_i\cup B_i$ which has two nonneighbors in $B_{i+1}\cup B_{i+2}$, say $q$ and $r$, then $\{p,v_i,q,v_{i+2},r\}$ induces    a $K_2+K_3$ or a $P_2+P_3$. \end{proof2}

\item\label{A1A2onenbd} {\it For $i\in \{1,2,3,4\}$, if $G$ is ($K_2+K_3$)-free, then each vertex in $A_i\cup B_i\cup T$ can have at most one neighbor in $A_{i+1}\cup B_i$. Likewise, each vertex in $A_{i+1}\cup B_i$ can have at most one neighbor in $A_{i}\cup B_i\cup T$.}
	
	\begin{proof2}If   there is a vertex, say $p\in A_i\cup B_i\cup T$ which has two neighbors in $A_{i+1}\cup B_i$, say $q$ and $r$, then $\{v_{i+2},v_{i-1},p,q,r\}$ induces  a $P_2+P_3$ or a $K_2+K_3$.  \end{proof2}

	\item\label{ABst} {\it For $i\in \{1,2,3,4\}$, if $G$ is co-banner-free, then  $A_i\cup B_i$ and $A_{i+1}\cup B_i$ are stable sets.}
	
\begin{proof2}
	If there are adjacent vertices in $A_i\cup B_i$, say $p$ and $q$ , then $\{p,q,v_i,v_{i+3},v_{i+2}\}$ induces a co-banner.
\end{proof2}
	
	\item\label{AiB+2com} {\it For $i\in \{1,2,3,4\}$, if $G$ is co-banner-free, then $A_i\cup B_i\cup A_{i+1}$ is complete to $B_{i+2}$; and so if $A_i\cup B_i\cup A_{i+1}\neq \es$, then $|B_{i+2}|\leq 1$.}
	
\begin{proof2}
 If there are nonadjacent vertices, say $p\in A_i\cup B_i\cup A_{i+1}$ and $q\in B_{i+2}$, then $\{q,v_{i+2},v_{i+3}, $ $v_i, p\}$ or $\{q,v_{i+2},v_{i+3},v_{i+1}, p\}$ induces a co-banner; so $A_i\cup B_i\cup A_{i+1}$ is complete to $B_{i+2}$. Now, by \ref{abtnbBD}, $B_{i+2}$ is a clique, and  hence by \ref{ABst}, $|B_{i+2}|\leq 1$. \end{proof2}
\end{enumerate}

\section{($P_2+ P_3$, $\overline{P_2+ P_3}$)-free graphs that contain a $C_4$}\label{sec:thmstruc}
In this section, we give a proof of  \cref{thm:structure}. It  is based on a sequence of partial results which depend on some special graphs; see Figure~\ref{fig:partfig1}.  More precisely, given a   ($P_2+ P_3$, $\overline{P_2+ P_3}$)-free graph $G$, we will show that the following hold:

\vspace{-0.25cm}
\begin{enumerate}[label=(\roman*)]\itemsep=0pt
 \item If $G$ contains a $K_{2,3}$, then  $\chi(G)\leq \omega(G)+3$ (\cref{thm:k23});
  \item If $G$ is $K_{2,3}$-free, and contains a banner, then $G$ is a good graph (\cref{thm:banner});
\item If $G$ is ($K_{2,3}$, banner)-free, and contains an $H_2$, then   $G$ is a good graph (\cref{thm:5apple});
\item If $G$ is ($K_{2,3}$, banner, $H_2$)-free, and contains an $H_3$, then   $G$ is a good graph (\cref{thm:caseH3});
    \item  If $G$ is ($K_{2,3}$)-free, and contains a $C_4$, then   $G$ is a good graph (\cref{thm:c4}).
\end{enumerate}
 We remark that, to prove some of the above items, we will often consider the complement graph $\overline{G}$, and show that
$G$ is a good graph.

\begin{figure}[t]
\centering
 \includegraphics[height=2.5cm]{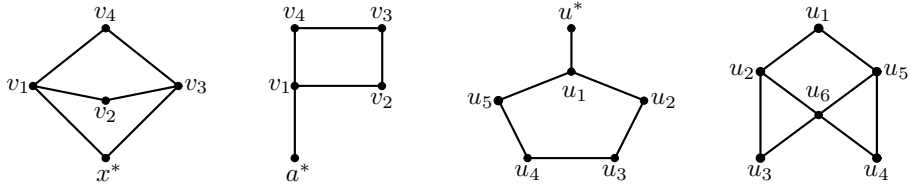}
\caption{A $K_{2,3}$, a banner, an $H_2$, and an $H_3$ (left to right).}\label{fig:partfig1}
\end{figure}

\subsection{($P_2+P_3$, $\overline{P_2+P_3}$)-free graphs that contain  a $K_{2,3}$}
\begin{thm}\label{thm:k23}
If $G$ is a ($\res$, $\overline{P_2+P_3}$)-free graph that contains a $K_{2,3}$, then $\chi(G)\leq \omega(G)+3$.
\end{thm}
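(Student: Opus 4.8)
The plan is to exploit the presence of a $K_{2,3}$ to locate a large, highly structured piece of $G$ on which we already understand the colouring, and then argue that what remains attaches to it in a very limited way. Concretely, suppose $\{u_1,u_2\}$ and $\{w_1,w_2,w_3\}$ are the two sides of an induced $K_{2,3}$. Note that $\{w_1,w_2,w_3\}$ is a stable set and each $w_i$ is adjacent to both $u_1$ and $u_2$, while $\{u_1,u_2,w_1,w_2\}$ is a $C_4$, so the whole apparatus of \cref{genprop} applies with this chosen $C_4$. First I would use the $(\res,\overline{\res})$-freeness to constrain, for each vertex $v$ outside the $K_{2,3}$, the trace $N(v)\cap\{u_1,u_2,w_1,w_2,w_3\}$: since $G$ has no $\overline{\res}$, $v$ cannot be adjacent to one $u_i$ and two $w_j$'s simultaneously (that would give a $\overline{\res}$ on $\{v,u_i,w_j,w_{j'},w_{j''}\}$ together with the missing edge), and since $G$ has no $\res$, $v$ cannot be nonadjacent to both $u_i$'s while being nonadjacent to some $w_j$ and adjacent to... — in short, I would enumerate the admissible traces and show there are only a bounded number of vertex classes.

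The key step will be to show that the set $W$ of vertices complete to $\{u_1,u_2\}$ (which contains $w_1,w_2,w_3$) is, after removing a few exceptional vertices, a \emph{perfect} graph, or at least $\chi$-bounded by $\omega+{}$(small constant); and that $V(G)\setminus W$ splits into a bounded number of cliques or stable sets. The rule \ref{Dper}-type arguments are the template: a vertex set that is complete to a fixed edge has clique number at most $\omega(G)-2$, so if we can cover $V(G)$ by $W$ together with $O(1)$ stable sets and use $\chi(G[W])\le \omega(G[W])+c\le \omega(G)-2+c$, we are done provided $c$ is small. I would try to prove that $G[W]$ is $(K_2+K_1)$-free (hence perfect) exactly as in \ref{Dper}: two nonadjacent vertices of $W$ together with an edge of $W$ would, with two of the $u_i$'s... — wait, here I need the analogue: two adjacent plus one nonadjacent vertex in $W$ with $u_1,u_2$ should produce a $\overline{\res}$ or $\res$. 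If $G[W]$ is not quite $(K_2+K_1)$-free I would instead bound it by hand, noting $W$ has stability number at most some constant (any large stable set in $W$ would extend the $\{w_1,w_2,w_3\}$ side and, combined with $u_1,u_2$ and an outside vertex, create a forbidden graph), so $G[W]$ is a graph with bounded $\alpha$, handled by Ramsey-type or direct clique-cover bounds.

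Then I would handle the vertices outside $W$. Using the trace analysis, each such vertex is either anticomplete to $\{u_1,u_2\}$, or adjacent to exactly one of them; I would show the anticomplete part is a stable set (cf.\ \ref{ATstset}) or a clique of bounded size, and the "adjacent to exactly one $u_i$" part decomposes into a bounded number of stable sets via \ref{a1b1nonnbd}-style nonneighbour bounds combined with the $w_j$'s. Finally I would assemble the colouring: colour $G[W]$ with $\omega(G)-2$ (or $\omega(G)-2+c$) colours, colour the remaining $O(1)$ stable sets with $O(1)$ fresh colours, and check the total is at most $\omega(G)+3$ — which forces the constants appearing above to be genuinely small, and this bookkeeping is the main obstacle: the whole proof lives or dies on whether the exceptional vertices and the extra stable sets can be squeezed into just $+3$ over $\omega(G)$. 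I expect the delicate part to be the interaction between the "adjacent to exactly one $u_i$" classes and $W$, where one must show their union still needs only a few colours beyond those used on $W$, presumably by arguing that most of them can reuse colours of $W$ because they have limited nonneighbourhoods there.
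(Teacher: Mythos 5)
Your overall template---pick the $C_4$ inside the $K_{2,3}$, run the partition of \cref{genprop}, colour a perfect ``core'' with roughly $\omega(G)$ colours and cover the rest by $O(1)$ stable sets---is the right one, and is essentially what the paper does. But the specific core you chose breaks the arithmetic. Your $W$ is the set of vertices complete to $\{u_1,u_2\}$, and $\{u_1,u_2\}$ is the two-element side of the $K_{2,3}$, i.e.\ a \emph{non-edge}. A clique of $G[W]$ extends by $u_1$ (not by both $u_i$'s), so all you get is $\omega(G[W])\le \omega(G)-1$, not the $\omega(G)-2$ you invoke from the \ref{Dper}-template, which really does require completeness to an \emph{edge}. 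In the paper's notation $W=D\cup X_1\cup\{v_2,v_4\}$, which by \ref{Xstset} and \ref{XDcom} is the join of $G[D]$ with a stable set, hence perfect with $\omega(G[W])=\omega(G[D])+1$, and $\omega(G[D])$ can equal $\omega(G)-2$; so $\chi(G[W])$ can be $\omega(G)-1$, leaving you only $4$ spare colours for $V(G)\setminus W=A\cup B\cup T\cup X_2\cup\{v_1,v_3\}$. Nothing in your sketch shows that this remainder is $4$-colourable, and the paper's own stable-set cover of the corresponding remainder uses $5$ sets. Your fallback---that $W$ has bounded stability number because a large stable set complete to $\{u_1,u_2\}$ would create a forbidden subgraph---is simply false: $K_{2,m}$ is $(P_2+P_3,\overline{P_2+P_3})$-free for every $m$, and indeed $X_1\cup\{v_2,v_4\}\subseteq W$ is a stable set of unbounded size.

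The paper closes exactly this gap by taking the core to be $D$ (the vertices complete to the whole $C_4$), for which \ref{Dper} gives $\chi(G[D])=\omega(G[D])\le\omega(G)-2$, and then splitting on $\omega(G[D])$. If $\omega(G[D])\le\omega(G)-3$ it suffices to cover $V(G)\setminus D$ by $6$ stable sets, which is easy. If $\omega(G[D])=\omega(G)-2$ one needs a $5$-set cover of $V(G)\setminus D$, and the two extra adjacency facts that make this possible ($B_1$ anticomplete to $B_2$, and the $X_1$-neighbours in $A_1$ anticomplete to $X_2$) are proved precisely by exhibiting a clique of size $\omega(G)+1$ built from a maximum clique of $D$---i.e.\ they \emph{use} the hypothesis $\omega(G[D])=\omega(G)-2$. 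That case split, and the fact that the core must be complete to genuine edges of the $C_4$, are the missing ingredients in your proposal; without them the ``$+3$'' bookkeeping you rightly identify as the crux does not close.
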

\begin{proof}
Let $G$ be a ($\res$, $\overline{P_2+P_3}$)-free graph that contains a $K_{2,3}$. We may consider a $K_{2,3}$ with vertices and edges as in Figure~\ref{fig:partfig1}. Let $C:=\{v_1,v_2,v_3,v_4\}$.  We partition $V(G)\sm C$ as in Section~\ref{genprop},  and we use the properties in Section~\ref{genprop}. Note that by the definition of $X_1$, $x^*\in X_1$; so $X_1\neq \emptyset$.
Suppose that $\omega(G)=2$. Then, since $G$ is triangle-free, we have $B\cup D=\es$,  $A_1$ is anticomplete to $X_1$, and $A_2$ is anticomplete to $X_2$. Now we let $S_1:=A_1\cup X_1\cup \{v_2,v_4\}$, $S_2:=A_2\cup X_2\cup \{v_1,v_3\}$, $S_3:=A_3\cup T$, and $S_4:=A_4$. Then $V(G)=\cup_{i=1}^4S_i$. Clearly,  by \ref{ATstset} and \ref{Xstset}, $S_1,S_2,S_3$ and $S_4$ are stable sets. Thus  $\chi(G)\leq 4\leq \omega(G)+3$. So we may assume that $\omega(G)\geq 3$. Next, we have the following:

\begin{claim}\label{Bstset} For   $i\in \{1,2,3,4\}$, $B_i$ is a stable set.\end{claim}

\vspace{-0.2cm}
\no{\it Proof of \cref{Bstset}}. If there are adjacent vertices in $B_i$, say $p$ and $q$, then,  by \ref{BXantcomp}, $\{p,q,x^*, v_{i+2},v_{i+3}\}$ induces a $P_2+P_3$.   $\Diamond$

\begin{claim} \label{AiBiBi-1nancom} $A_2$ is anticomplete to $B_1\cup B_2$, and $A_4$ is anticomplete to $B_3\cup B_4$.  \end{claim}

\vspace{-0.2cm}
\no{\it Proof of \cref{AiBiBi-1nancom}}.~We prove that $A_2$ is anticomplete to $B_1$ in the first assertion.
Suppose there are adjacent vertices, say $a\in A_2$ and $b\in B_1$. By \ref{BXantcomp}, $bx^*\notin E(G)$. Now since $\{a,b,x^*,v_3,v_4\}$ does not induce a $\res$, we have $ax^*\in E(G)$, and then $\{a,x^*,v_1,v_2,b\}$ induces a $\overline{P_2+P_3}$.  $\Diamond$

\medskip
  First suppose that $\omega(G[D])\leq\omega(G)-3$. Now we let $S_1:=A_1\cup T\cup\{v_2,v_4\}$, $S_2:=B_1\cup X_1$, $S_3:=A_2\cup B_2$, $S_4:=A_3\cup\{v_1\}$, $S_5:=B_3\cup X_2$, and $S_6:=A_4\cup B_4\cup\{v_3\}$. Then $V(G)\setminus D=\cup_{i=1}^6S_i$. Also, by \ref{ATstset}, \ref{Xstset}, \ref{BXantcomp}, and by \cref{Bstset,AiBiBi-1nancom}, $S_1,S_2,\ldots, S_6$ are stable sets.    Hence $\chi(G)\leq \chi(G[D])+6\leq (\omega(G)-3)+6 = \omega(G)+3$, and we are done.
So, by \ref{Dper}, we may assume  that $\omega(G[D])=\omega(G)-2$. Then since $\omega(G)\geq 3$, $D\neq \emptyset$.
Let $A_1'$ denote the set $\{a\in A_1\mid  a \mbox{ has a neighbor in } X_1\}$. Then we have the following.
\begin{claim}\label{B1acB2} $B_1$ is anticomplete to $B_2$. Likewise $B_3$ is anticomplete to $B_4$.
\end{claim}

\vspace{-0.2cm}
\no{\it Proof of \cref{B1acB2}}.~Suppose to the contrary that there are adjacent vertices, say $b_1\in B_1$ and $b_2\in B_2$. Note that for any vertex $d\in D$, by \ref{XDcom}, since $\{b_1,b_2,x^*,d,v_4\}$ does not induce a $\res$, $d$ is adjacent to one of $b_1$, $b_2$. We let $D_1:=\{d\in D\mid db_1\in E(G),db_2\notin E(G)\}$, $D_2:=\{d\in D\mid db_2\in E(G),db_1\notin E(G)\}$ and $D_3:=\{d\in D\mid db_1,db_2\in E(G)\}$. Then $D=D_1\cup D_2\cup D_3$. Now if there are adjacent vertices, say $d_1\in D_1$ and $d_2\in D_2$, then $\{b_1,d_1,d_2,b_2,v_3\}$ induces a $\overline{P_2+P_3}$; so $D_1$ is anticomplete to $D_2$. Moreover, by \ref{abtnbBD}, it follows that  $ D_1\cup D_3 (=N(b_1)\cap D)$  and  $D_2\cup D_3 (=N(b_2)\cap D)$ are cliques. Thus we conclude that any maximum clique in $G[D]$ is  either $D_1\cup D_3$ or $D_2\cup D_3$; so $\max\{|D_1\cup D_3|, |D_2\cup D_3|\}=\omega(G)-2$.  Then since $D_1\cup D_3\cup\{b_1,v_1,v_{2}\}$ and $D_2\cup D_3\cup\{b_2,v_2,v_{3}\}$ are cliques,  and so $\max\{|D_1\cup D_3\cup\{b_1,v_1,v_{2}\}|, |D_2\cup D_3\cup\{b_2,v_2,v_{3}\}|\} =(\omega(G)-2)+3=\omega(G)+1$, a contradiction. This proves \cref{B1acB2}. $\Diamond$

 \begin{claim}\label{A1'-ac-X2}  $A_1'$ is anticomplete to $X_2$.\end{claim}

\vspace{-0.2cm}
\no{\it Proof of \cref{A1'-ac-X2}}.~Suppose to the contrary that there are adjacent vertices, say $a\in A_1'$ and $x\in X_2$. By the definition of $A_1'$, there is a vertex $x'\in X_1$  such that $ax'\in E(G)$. Recall that, by \ref{XDcom}, $D$ is complete to $X$. Also, for any $d\in D$, since  $\{a,v_1,v_2,x,d\}$ does not induce a $\overline{P_2+P_3}$; so $D$ is complete to $\{a\}$. Thus, by \ref{abtnbBD}, $D$ is a clique.  Now  $D\cup\{a,v_1,x'\}$ is a clique of size $\omega(G)+1$, a contradiction. This proves \cref{A1'-ac-X2}. $\Diamond$

\smallskip
Now we let $S_1:=A_2\cup B_1\cup B_2\cup \{v_4\}$, $S_2:=A_4\cup B_3\cup B_4\cup \{v_2\}$, $S_3:=A_1'\cup X_2\cup\{v_3\}$, $S_4:=(A_1\sm A_1')\cup X_1$, and $S_5:=A_3\cup T\cup \{v_1\}$. Then $V(G)\setminus D=\cup_{i=1}^5S_i$. Also, by \cref{Bstset,AiBiBi-1nancom,B1acB2,A1'-ac-X2}, by \ref{ATstset}, and  by \ref{Xstset}, we see that $S_1,S_2,\ldots, S_5$ are stable sets. So $\chi(G)\leq \chi(G[D])+5$.  Then, by \ref{Dper},   we have $\chi(G)\leq \omega(G)+3$.  This completes the proof.
\end{proof}


%
%
%
%
%
%
%
%
%
%

 \begin{figure}[t]
\centering
 \includegraphics[height=2.5cm]{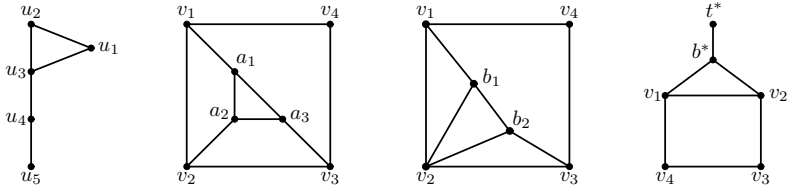}
\caption{A co-banner, an $H_1$, an $\overline{H_2}$, and an $\overline{H_3}$ (left to right).}\label{fig:partfig2}
\end{figure}

\subsection{($P_2+P_3$, $\overline{P_2+P_3}$, $K_{2,3}$)-free graphs that contain a banner}

We consider two cases  depending on whether ($P_2+P_3$, $\overline{P_2+P_3}$, $K_{2,3}$)-free graphs contain an $\overline{H_1}$ or not, and are given below in two subsections.

\subsubsection{($P_2+P_3$, $\overline{P_2+P_3}$, $K_2+K_3$)-free graphs that contain an $H_1$}
Let $G$ be a $(P_2+P_3$, $\overline{P_2+P_3}$, $K_2+K_3)$-free graph that contains an $H_1$. We may consider an $H_1$  with vertices and edges as in Figure~\ref{fig:partfig2}. Let $C:=\{v_1,v_2,v_3,v_4\}$.  We partition $V(G)\sm C$ as in Section~\ref{genprop},  and we use the properties in Section~\ref{genprop}. Clearly, $a_1\in A_1$, $a_2\in A_2$ and $a_3\in A_3$. To proceed further, let $L_1:=\{a\in A_4\mid aa_2\in E(G)  \mbox{ and } N(a)\cap \{a_1,a_3\}\neq\es\}$, $L_2:=\{a\in A_4\mid aa_2\in E(G)  \mbox{ and } N(a)\cap \{a_1,a_3\}=\es\}$,  and let $L_3:=\{a\in A_4\mid aa_2\notin E(G)\}$. Then clearly $A_4=\cup_{j=1}^3L_j$.  By   \ref{a1a3cmnnbda2}:(a), $|L_2|\leq 1$, and, by \ref{a1b1nonnbd}, $|L_3|\leq 1$; so if $L_1=\es$, then $|A_4|\leq 2$.  Moreover, the graph $G$ has some more properties which we give in \cref{G1-A123,G1-BD-theta,H1-theta,H1-L1} below.
\begin{lemma}\label{G1-A123} The following hold:

\vspace{-0.4cm}
\begin{lemmalist}
 \item\label{G1-A123i} $A_1\sm \{a_1\}$ is complete to $\{a_3\}$. Likewise, $A_3\sm \{a_3\}$ is complete to $\{a_1\}$.
\item\label{G1-A123ii}
 $(A_1\sm \{a_1\})\cup (A_3\sm \{a_3\})$ is anticomplete to $\{a_2\}$, and is complete to $(A_2\sm\{a_2\})\cup L_2$.
  \item\label{G1-A123iii} If $(A_2\sm \{a_2\})\cup L_2\neq \es$, then  $|A_1\sm \{a_1\}|\leq 1$ and $|A_3\sm \{a_3\}|\leq 1$.
\item\label{H1-X2emp} $X_2$ is empty.
\end{lemmalist}
\end{lemma}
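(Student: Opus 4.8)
My plan is to run the whole lemma as a chain of forbidden-subgraph arguments inside the partition $V(G)\setminus C=A\cup B\cup D\cup X\cup T$ of \cref{genprop}, where $C=\{v_1,\dots,v_4\}$ is the $4$-cycle of the fixed copy of $H_1$ and $a_i\in A_i$ for $i\in\{1,2,3\}$. Reading the edges of $H_1$ off Figure~\ref{fig:partfig2}, each $a_i$ meets $C$ only in $v_i$ and $\{a_1,a_2,a_3\}$ induces a triangle; in particular $a_1a_3\in E(G)$ with $a_1\in A_1$, $a_3\in A_3$, so the hypotheses of \ref{a1a3cmnnbda2} and \ref{a1a2x} are available for this pair and for $(a_1,a_2)$ and $(a_2,a_3)$. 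I would establish \cref{G1-A123i,G1-A123ii,G1-A123iii,H1-X2emp} in that order, using \ref{ATstset} (stability of the sets $A_i$) throughout.

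For \cref{G1-A123i}: given $a\in A_1\setminus\{a_1\}$ (so $a\not\sim a_1$ by \ref{ATstset}), suppose $a\not\sim a_3$. If $a\sim a_2$, then $\{v_3,v_4,a,a_2,a_1\}$ induces a $P_2+P_3$, with $v_3v_4$ the isolated edge and $a$--$a_2$--$a_1$ the path (induced because $a\not\sim a_1$, anticomplete to $\{v_3,v_4\}$ because $a,a_1\in A_1$ and $a_2\in A_2$). If $a\not\sim a_2$, then $\{a,v_1,v_3,a_3,a_2\}$ induces a $P_2+P_3$, with $av_1$ the isolated edge and $v_3$--$a_3$--$a_2$ the path (anticomplete to $\{a,v_1\}$ since $a\not\sim a_2,a_3$ and $v_1\not\sim a_2,a_3$). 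Both cases contradict the hypothesis, so $a\sim a_3$; the statement for $A_3$ follows from the automorphism $v_1\leftrightarrow v_3$ of $H_1$.

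For \cref{G1-A123ii}: if some $a\in A_1\setminus\{a_1\}$ had $a\sim a_2$, then $a\sim a_3$ by \cref{G1-A123i} and $\{a,a_1,a_2,a_3,v_1\}$ induces a $\overline{P_2+P_3}$ -- the nonadjacent pair $\{a,a_1\}$ is complete to $\{a_2,a_3,v_1\}$, which induces $K_2+K_1$ (the edge $a_2a_3$ and the isolated $v_1$) -- a contradiction; hence $A_1\setminus\{a_1\}$, and by symmetry $A_3\setminus\{a_3\}$, is anticomplete to $\{a_2\}$. For completeness to $(A_2\setminus\{a_2\})\cup L_2$, take $a\in A_1\setminus\{a_1\}$ and $w$ in that set, and suppose $a\not\sim w$. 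If $w\in A_2\setminus\{a_2\}$: one checks first that $w\not\sim a_1$ (else $\{v_3,v_4,a_1,a_2,w\}$ induces a $P_2+P_3$ with isolated edge $v_3v_4$ and path $a_2$--$a_1$--$w$, which is induced since $w\not\sim a_2$ by \ref{ATstset}), hence symmetrically $w\not\sim a_3$, and then $\{v_2,w,a,a_3,a_1\}$ induces a $P_2+P_3$ with isolated edge $v_2w$ and path $a$--$a_3$--$a_1$. If $w\in L_2\subseteq A_4$, then $w\not\sim a_1,a_3$ by definition of $L_2$, and $\{v_4,w,a,a_3,a_1\}$ induces a $P_2+P_3$ the same way. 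In either case we reach a contradiction, so $a\sim w$. Then \cref{G1-A123iii} is immediate: such a $w$ lies in $A_2$ or in $A_4$, so by \ref{A1A2onenbd} (applicable since $G$ is $(K_2+K_3)$-free) it has at most one neighbour in $A_1$; being complete to $A_1\setminus\{a_1\}$ this forces $|A_1\setminus\{a_1\}|\le 1$, and likewise $|A_3\setminus\{a_3\}|\le 1$.

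Finally \cref{H1-X2emp}: suppose $x\in X_2$. By \ref{a1a3cmnnbda2}:(c) applied to the edge $a_1a_3$, $x$ is adjacent to $a_1$ or to $a_3$; in either case, applying \ref{a1a2x} to the edges $a_1a_2$ and $a_2a_3$ (each a triangle edge of $H_1$) forces $x\sim a_1$, $x\sim a_3$, and $x\not\sim a_2$. But then $\{x,a_2,a_1,a_3,v_2\}$ induces a $\overline{P_2+P_3}$: the nonadjacent pair $\{x,a_2\}$ is complete to $\{a_1,a_3,v_2\}$ (recall $x\sim v_2$ as $x\in X_2$), which induces $K_2+K_1$. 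This contradiction gives $X_2=\emptyset$. The delicate parts are the completeness statement in \cref{G1-A123ii} and all of \cref{H1-X2emp}: the recurring idea is that the triangle $a_1a_2a_3$ of $H_1$ supplies both $K_2+K_1$-patterns and short induced paths lying ``far'' from $C$, which is exactly what is needed to pair with an edge of $C$ into a $P_2+P_3$ or to exhibit a nonadjacent pair with three common neighbours making a $\overline{P_2+P_3}$; the remaining effort is the routine checking that each five-vertex witness is genuinely induced with its two parts anticomplete, which every time reduces to the fact that a vertex of $A_i$ meets $C$ only in $v_i$.
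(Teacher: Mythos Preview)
Your proof is correct and follows essentially the same route as the paper: both rely on the same five-vertex witnesses built from the triangle $\{a_1,a_2,a_3\}$ and an edge of $C$, and your configuration for \cref{H1-X2emp} is identical to the paper's. The only cosmetic difference is that in \cref{G1-A123i} and the anticomplete part of \cref{G1-A123ii} you re-derive by hand the consequences of \ref{A1A2onenbd} (that $a_2$ has no second neighbour in $A_1$, etc.) via direct $P_2+P_3$ and $\overline{P_2+P_3}$ arguments, whereas the paper simply cites \ref{A1A2onenbd}; this makes your argument slightly longer but more self-contained.
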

\begin{proof} $(i)$:~For any $a_1'\in A_1\sm \{a_1\}$,  by \ref{A1A2onenbd}, $a_2a_1'\notin E(G)$, and then since $\{a_2,a_3, a_1',v_1,v_4\}$ does not induce a $P_2+P_3$, we have $a_1'a_3\in E(G)$; so $A_1\sm \{a_1\}$ is complete to $\{a_3\}$. $\diamond$

\no{$(ii)$:}~Clearly, the first assertion of $(ii)$ follows from  \ref{A1A2onenbd}.  If there are nonadjacent vertices, say $a_1'\in A_1\sm \{a_1\}$     and $a_2'\in A_2\sm \{a_2\}$, then, by \ref{A1A2onenbd} and \cref{G1-A123i},   $\{a_2',v_2,a_1,a_3,a_1'\}$  induces a $P_2+P_3$; so $A_1\sm \{a_1\}$ is complete to $A_2\sm\{a_2\}$. Likewise, $A_3\sm \{a_3\}$ is complete to $A_2\sm\{a_2\}$. If there are nonadjacent vertices, say $a'\in A_1\sm \{a_1\}$ and $a\in L_2$, then, by \cref{G1-A123i},  $\{a,v_4,a_1,a_3,a_1'\}$ induces a $P_2+P_3$; so $A_1\sm \{a_1\}$ is complete to $L_2$. Likewise, $A_3\sm \{a_3\}$ is complete to $L_2$. $\diamond$

\no{$(iii)$:}~This  follows from \cref{G1-A123ii} and \ref{A1A2onenbd}. $\diamond$

\no{$(iv)$:} If there is a vertex, say $x\in X_2$, then, by \ref{a1a3cmnnbda2}:\ref{X2DA1A3}, we may assume that $a_1x\in E(G)$, and then, by \ref{a1a2x}, $a_2x\notin E(G)$ and $a_3x\in E(G)$, and hence $\{a_1,a_2,v_2,x,a_3\}$ induces a $\overline{P_2+P_3}$; so $X_2 =\es$.
\end{proof}

\begin{lemma}\label{G1-BD-theta}The following hold:

\vspace{-0.4cm}
\begin{lemmalist}
\item   \label{B1B2card}$B_1$ is  anticomplete to $\{a_1,a_2\}$, and $B_2$ is anticomplete to $\{a_2,a_3\}$. Moreover,
$B_1$ is complete to $\{a_3\}$, and $B_2$ is complete to $\{a_1\}$; so, $|B_1| \leq 1$ and $|B_2|\leq 1$.
\item\label{H1-B3B4card}  $|B_3|\leq 2$, and  $|B_4|\leq 2$. Further, if $|B_3| = 2$, then $|B_4|\leq 1$ and vice versa.
\item\label{D-a123}   $D$ is complete to $\{a_1,a_2,a_3\}$, and $D$ is a clique.
\end{lemmalist}
\end{lemma}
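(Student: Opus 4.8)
The plan is to prove the three assertions of \cref{G1-BD-theta} by exploiting the $H_1$-structure together with the $(K_2+K_3)$-freeness and the general rules of \cref{genprop}, working one bullet at a time. For \cref{B1B2card}, I would first pin down the non-neighbours: take $b\in B_1$; since $b$ is adjacent to $v_1,v_2$ but to no other vertex of $C$, and since $a_1,a_2\in A_1\cup A_2$, I expect \ref{A1A2onenbd} (each vertex of $A_1\cup B_1\cup T$ has at most one neighbour in $A_2\cup B_1$, and the symmetric statement) combined with the explicit edges $a_1a_2\in E(G)$, $a_1v_1,a_2v_2\in E(G)$ to force $ba_1,ba_2\notin E(G)$: an adjacency $ba_1$ or $ba_2$ would create a $P_2+P_3$ or $K_2+K_3$ on $\{b,a_i,\dots\}\cup\{a_j,v_k,v_\ell\}$. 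Once $b$ is anticomplete to $\{a_1,a_2\}$, I would show $ba_3\in E(G)$ by noting that otherwise $\{a_1,a_2\}$ (or $\{a_1,a_2,a_3\}$ after adding a suitable pair of $C$-vertices) together with $b$ and $v_3,v_4$ yields a $P_2+P_3$; since $a_3\in A_3$ and $b\in B_1$, and $B_1$ is complete to $\{a_3\}$, the bound $|B_1|\le 1$ follows because $B_1\cup\{a_3\}$ would otherwise contain two vertices of $B_1$ with a common neighbour, contradicting \ref{abtnbBD} (which says $N(a_3)\cap B_1$ is a clique) together with $B_1$ being a stable set — actually I would just invoke that $N(a_3)\cap B_1$ is a clique and, because $B_1$ is stable (co-banner/\ref{ABst}-type argument, or directly from $H_1$-freeness reasoning), $|B_1|\le 1$. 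The $B_2$ statement is symmetric under the reflection swapping $v_1\leftrightarrow v_3$, $a_1\leftrightarrow a_3$.

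For \cref{H1-B3B4card}, the bounds $|B_3|\le 2$ and $|B_4|\le 2$ should come from analysing neighbours of $a_2$ in $B_3$ and $B_4$: by \ref{abtnbBD}, $|N(a_2)\cap B_3|\le 1$ (with $B_3=B_{2+1}$, using $a_2\in A_2$, $B_3=B_{i+1}$ for $i=2$) and similarly $|N(a_2)\cap B_4|\le 1$ (with $B_4=B_{i+2}$), so each of $B_3,B_4$ splits into at most one neighbour of $a_2$ plus non-neighbours, and then \ref{a1b1nonnbd} caps the non-neighbours of $a_2$ in each set at one; combining gives $|B_3|\le 2$, $|B_4|\le 2$. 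For the ``further'' clause, I would suppose $|B_3|=|B_4|=2$ and derive a contradiction: writing $B_3=\{b,b'\}$, $B_4=\{c,c'\}$, I would use the $H_1$-edges at $a_2$ and the co-banner/$(K_2+K_3)$ rules \ref{A1A2onenbd}, \ref{A1A3ancom} applied with $i=3$ (so $A_3\cup B_3$, $A_4\cup B_3$, $B_2\cup B_1$, $B_4$...) to force enough adjacencies between $B_3$ and $B_4$ to produce a $K_2+K_3$ or a $\overline{P_2+P_3}$; the key auxiliary fact will likely be \ref{bibi+2cmnnbd} forcing $B_3$ and $B_4$ vertices to have matching neighbourhoods once some $B_3$--$B_4$ edge exists, which over two pairs quickly overflows $\omega$ or creates a forbidden induced subgraph. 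This ``further'' clause is where I expect the only real case analysis.

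For \cref{D-a123}, I would first show $D$ is complete to $\{a_1,a_2,a_3\}$: for $d\in D$ and $i\in\{1,2,3\}$, if $da_i\notin E(G)$ then, since $d$ is complete to $C$ and $a_i\in A_i$, the set $\{d,v_{i+1},v_{i+2}\}\cup\{a_i,v_{i+2}\text{-ish}\}$ — more precisely a $\overline{P_2+P_3}$ on $\{d, a_i\}$ plus three $C$-vertices, or a $K_2+K_3$ using two more of $\{a_1,a_2,a_3\}$ — is induced; the $H_1$ gives us the three pairwise-placed vertices $a_1,a_2,a_3$ in $A_1,A_2,A_3$ to build the forbidden graph. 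Then $D$ being a clique follows from \ref{abtnbBD} (for $p=a_1\in A_1$, $N(a_1)\cap(D\cup B_3)$ is a clique, and $D\subseteq N(a_1)$ by what we just proved), or directly from \ref{Dper} together with the extra structure. The main obstacle overall is the $|B_3|=|B_4|=2$ sub-case of \cref{H1-B3B4card}: getting the right induced subgraph there requires carefully tracking which of the at-most-four possible $B_3$--$B_4$ edges are present and feeding the resulting configuration, plus one or two of the $a_i$'s and $C$-vertices, into either the $P_2+P_3$ or the $\overline{P_2+P_3}$ obstruction; everything else is a short application of the rules of \cref{genprop} plus the explicit $H_1$ edges.
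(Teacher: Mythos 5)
Your plans for \cref{B1B2card} and \cref{D-a123} track the paper's proof in outline, and the first half of \cref{H1-B3B4card} ($|B_3|\le 2$, $|B_4|\le 2$ via ``at most one neighbour of $a_2$'' from \ref{abtnbBD} plus ``at most one nonneighbour of $a_2$'' from \ref{a1b1nonnbd}) is exactly the paper's argument. The genuine gap is the ``further'' clause of \cref{H1-B3B4card}, which you yourself flag as the main obstacle and leave unexecuted. The paper does no case analysis on $B_3$--$B_4$ edges at all: the point is that the two nonneighbour bounds are not independent, i.e.\ $a_2$ has at most one nonneighbour in $B_3\cup B_4$ \emph{jointly} (two such nonneighbours $q\in B_3$, $r\in B_4$ are attacked together with $\{a_2,v_2,v_4\}$ as in \ref{a1b1nonnbd}). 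Hence if $|B_3|=2$ the unique nonneighbour of $a_2$ already sits in $B_3$, so $B_4\subseteq N(a_2)$ and $|B_4|\le 1$ by \ref{abtnbBD}. Your substitute plan is unlikely to close: the tool you name, \ref{bibi+2cmnnbd}, governs edges between $B_j$ and $B_{j+2}$, whereas $B_3$ and $B_4$ are \emph{consecutive} classes, so it says nothing about $B_3$--$B_4$ adjacencies; and in the subcase where the two nonneighbours of $a_2$ are adjacent to each other, the natural five-vertex sets through $v_1$ or $v_3$ all fail (the $B_4$-vertex sees $v_1$, the $B_3$-vertex sees $v_3$), so ``tracking the $B_3$--$B_4$ edges'' does not by itself produce the forbidden subgraph. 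Without the joint bound, ``$|B_3|=2\Rightarrow|B_4|\le 1$'' simply does not follow from the two separate bounds you establish.

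Two smaller slips in \cref{B1B2card}. First, the five-vertex set you propose for showing $B_1$ complete to $\{a_3\}$, namely $\{a_1,a_2\}\cup\{b,v_3,v_4\}$, induces nothing forbidden: $b\in B_1$ is anticomplete to $\{v_3,v_4\}$, so $\{b,v_3,v_4\}$ is a $P_2+P_1$, not a $P_3$. The $P_3$ must pass through a $C$-neighbour of $b$, and the only working choice is $\{a_2,a_3\}\cup\{b,v_1,v_4\}$ (one must avoid $a_1$, which sees $v_1$); your hedge ``a suitable pair of $C$-vertices'' would eventually land there, but the sets you actually name do not work. Second, your derivation of $|B_1|\le 1$ leans on ``$B_1$ is a stable set'', justified by an \ref{ABst}-type argument; co-banner-freeness is not among the hypotheses of this subsection (only $K_2+K_3$-freeness is assumed), and ``$H_1$-freeness reasoning'' is unavailable since $G$ contains $H_1$ by hypothesis. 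The correct and shorter route is the second clause of \ref{abtnbBD}: $|N(a_3)\cap B_1|=|N(a_3)\cap B_{3+2}|\le 1$, and $B_1\subseteq N(a_3)$ by what you just proved. These two points are repairable, but as written they appeal to facts you do not have.
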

\begin{proof} $(i)$:~The first assertion follows from \ref{A1A2onenbd}.
Now for any $b\in B_1$, since $\{a_2,a_3,b,v_1,$ $v_4\}$ does not  induce a $P_2+P_3$, $B_1$ is complete to $\{a_3\}$; so, by \ref{abtnbBD}, $|B_1|\leq 1$. Likewise, $B_2$ is complete to $\{a_1\}$, and $|B_2|\leq 1$. $\diamond$

\no{$(ii)$}:~By \ref{a1b1nonnbd}, $a_2$ can have at most one nonneighbor in $B_3\cup B_4$. By \ref{abtnbBD}, $a_2$ can have at most one neighbor in $B_3$. Likewise,  $a_2$ can have at most one neighbor in $B_4$. So $|B_3|\leq 2$ and  $|B_4|\leq 2$,  and if $|B_3| = 2$ then $|B_4|\leq 1$ and vice versa. $\diamond$

\no{$(iii)$:}~Let $d\in D$. Then since $\{v_1,a_1,a_2,v_2,d\}$ does not induce a $\overline{P_2+P_3}$,  we have $da_1,da_2\notin E(G)$ or $da_1,da_2\in E(G)$. If $da_1,da_2\notin E(G)$, then $\{a_1,a_2,d,v_3,v_4\}$ induces a $K_2+K_3$. So, we have $da_1,da_2\in E(G)$. Likewise, $da_3\in E(G)$. Hence $D$ is complete to $\{a_1,a_2,a_3\}$. Then, by \ref{abtnbBD},   $D$ is a clique.
\end{proof}

\begin{lemma}\label{H1-theta} The following hold:

\vspace{-0.4cm}
\begin{lemmalist}
\item\label{theta-BC}
 $\theta(G[B\cup C])\leq 3$.
 \item\label{a1a2bcdclq}	 $\theta(G[L_3\cup B\cup C\cup D \cup \{a_1,a_2,a_3\}])\leq 4$.
 \end{lemmalist}
 \end{lemma}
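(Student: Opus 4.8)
The plan is to prove both parts of \cref{H1-theta} by exhibiting explicit clique covers, leaning heavily on the structural restrictions accumulated in \ref{ATstset}--\ref{AiB+2com} together with \cref{G1-A123,G1-BD-theta}. The key observation driving part \ref{theta-BC} is that the sets $B_1, B_2, B_3, B_4$ are all small and well-behaved: by \ref{abtnbBD} each $N(p)\cap(D\cup B_{i+2})$ is a clique, by \cref{B1B2card} we have $|B_1|\le 1$ and $|B_2|\le 1$, and by \cref{H1-B3B4card} we have $|B_3|+|B_4|\le 3$ with $\max\{|B_3|,|B_4|\}\le 2$. Meanwhile each $B_i\cup\{v_i,v_{i+1}\}$ fails to be a clique only through $B_i$, and the cross-edges between opposite $B_i$'s are controlled by \ref{bibi+2cmnnbd}. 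I would therefore partition $B\cup C$ into at most three cliques by a short case analysis on the sizes $|B_3|, |B_4|$: for instance, take $K_1:=B_1\cup B_3\cup\{v_1\}$-type groupings. Concretely, since $B_1$ is complete to $\{a_3\}$ and similarly for $B_2$, and $B_1\cup\{v_1\}$, $B_1\cup\{v_2\}$ are cliques of size $\le 2$, the three cliques can be chosen as (roughly) $\{v_1,v_2\}\cup(\text{one vertex of }B_1)$, $\{v_3,v_4\}\cup(\text{one vertex of }B_2)$ together with the $B_3,B_4$ vertices absorbed using \ref{BXantcomp} and \ref{bibi+2cmnnbd}; I expect a clean split into exactly $3$ cliques after checking the handful of size configurations.

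For part \ref{a1a2bcdclq} the plan is to build on the clique cover from \ref{theta-BC}. We already know from \cref{D-a123} that $D\cup\{a_1,a_2,a_3\}$ is "almost" a clique — $D$ is a clique complete to $\{a_1,a_2,a_3\}$, and $a_1a_2\notin E$ because $a_1\in A_1, a_2\in A_2$ are only joined to $C$ at distinct vertices... actually here I must be careful: $a_1a_2, a_1a_3, a_2a_3$ may or may not be edges, but the $H_1$ configuration pins down which of them are present (from Figure~\ref{fig:partfig2}). Using that, $\{a_1,a_2,a_3\}$ together with $D$ contributes at most two cliques (e.g. $D\cup\{a_1\}$-ish groupings), and $L_3$ is a single vertex by \ref{a1b1nonnbd} ($|L_3|\le 1$) which is anticomplete to $a_2$ and can be folded into one of the $B\cup C$ cliques or paired with an $A_i$-vertex via the adjacencies forced by the definition of $L_3$ and \ref{ATstset}. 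The arithmetic target is $3$ (from part (i)) $+\,1$ $=4$, so the delicate point is to show $L_3\cup D\cup\{a_1,a_2,a_3\}$ plus the leftover pieces of $B\cup C$ can be covered by exactly one extra clique beyond the three of part (i) — equivalently, that $D\cup\{a_1,a_2,a_3,\text{the }L_3\text{ vertex}\}$ needs only one clique and the three $B\cup C$-cliques can be enlarged to swallow whatever remains.

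The main obstacle I anticipate is the bookkeeping in part \ref{a1a2bcdclq}: one must simultaneously re-use (not merely append to) the three cliques covering $B\cup C$, which means verifying that each $a_i$ or $L_3$-vertex is complete to one of those three cliques. This requires knowing the adjacency between $\{a_1,a_2,a_3\}$ and each $B_j$ and each $v_i$, which comes partly from the $H_1$ structure (giving $a_i\in A_i$, so $a_i$ is adjacent to exactly $v_i$ among $C$) and partly from \cref{B1B2card} ($B_1$ complete to $a_3$, $B_2$ complete to $a_1$) and \ref{A1A2onenbd} (which bounds neighbors of $a_i$ in adjacent $A,B$ sets). I would organize the proof of \ref{a1a2bcdclq} as: first fix the three cliques $Q_1,Q_2,Q_3$ from \ref{theta-BC} in a specific form, then show $a_1$ extends $Q$ containing $v_2$ (or $B_2$), $a_3$ extends $Q$ containing $B_1$ or $v_4$, $a_2$ together with $D$ and the $L_3$-vertex forms the fourth clique $Q_4$ (using \cref{D-a123} for $D\cup\{a_2\}$ and the definition of $L_3$ plus \ref{abtnbBD} for $L_3$), and finally double-check every vertex of $L_3\cup B\cup C\cup D\cup\{a_1,a_2,a_3\}$ lies in $Q_1\cup\cdots\cup Q_4$. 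No step needs a new idea beyond the recorded properties, so the risk is purely in matching the clique choices to the adjacencies without an off-by-one in the case where $|B_3|=2$ or $|B_4|=2$.
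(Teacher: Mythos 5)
Your overall strategy (explicit clique covers) matches the paper's, but both parts have concrete problems. In part (i), the cliques you write down are not cliques: $B_2$ is complete to $\{v_2,v_3\}$, not to $\{v_3,v_4\}$, and $B_3$ is anticomplete to $v_1$, so neither ``$B_1\cup B_3\cup\{v_1\}$'' nor ``$\{v_3,v_4\}\cup(\text{a vertex of }B_2)$'' is usable. More importantly, the real content of part (i) is missing: since one of $|B_3|,|B_4|$ may equal $2$ and $B_3$ need not be a clique, its vertices must be split between cliques anchored at $v_3$ and at $v_4$ (or absorbed into $B_2\cup B_4$), and the only way to guarantee the required cross-edges is the dichotomy the paper runs -- either $B_3$ is complete to $B_2\cup B_4$, or some $b_2\in B_2$ misses some $b_3\in B_3$, in which case \ref{prop-K23}:\ref{A1A3ancom} forces $b_2$ to be complete to $(B_3\sm\{b_3\})\cup B_4$ and \ref{bibi+2cmnnbd} transfers that adjacency to $B_4$. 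Without this step, ``I expect a clean split into exactly $3$ cliques'' is not justified.

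In part (ii) the plan breaks outright. Your proposed fourth clique $\{a_2\}\cup D\cup L_3$ is not a clique: by definition $L_3=\{a\in A_4\mid aa_2\notin E(G)\}$, so its vertex is nonadjacent to $a_2$ -- a fact you yourself state earlier in the same paragraph. The ``extend the three cliques of part (i)'' idea also fails, because $a_1$ and $a_3$ each see only one vertex of $C$ and need not be complete to $B_3$ or $B_4$, so they cannot in general be appended to any of the part-(i) cliques. The paper instead splits on $L_3$: if $L_3=\es$ it simply adds the single clique $D\cup\{a_1,a_2,a_3\}$ to the part-(i) cover, and if $L_3=\{a^*\}\neq\es$ it abandons the part-(i) cover entirely and builds the fresh cover $\{a^*\}\cup B_1\cup\{a_3\}$, $B_3\cup D\cup\{a_1,a_2\}$, $B_2\cup\{v_2,v_3\}$, $B_4\cup\{v_1,v_4\}$, which requires several new claims ($a_1a^*,a_3a^*\in E(G)$; $|B_3|,|B_4|\le 1$ in this case; $\{a^*\}$ complete to $B_1$; $\{a_1,a_2\}$ and $D$ complete to $B_3$) that your outline neither states nor proves. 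As written, the proposal does not close either part.
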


 \begin{proof}
 \no{$(i)$:}~Consider the graph $G[B\cup C]$. By \cref{H1-B3B4card}, either $|B_3|\leq 1$ or $|B_4|\leq 1$. We may assume that $|B_4|\leq 1$.
 First suppose that there are nonadjacent vertices, say $b_3\in B_3$ and $b_2\in B_2$. Then by \ref{prop-K23}:\ref{A1A3ancom}, $\{b_2\}$ is complete to $(B_3\sm \{b_3\})\cup B_4$. Then, by \ref{bibi+2cmnnbd}, $B_4$ is complete to $(B_3\sm \{b_3\})$.
 So, $\{b_2\}\cup (B_3\sm \{b_3\})\cup B_4$, $ \{b_3, v_3,v_4\}$, $B_1\cup \{v_1,v_2\}$ are cliques, and thus $\theta(G[B\cup C])\leq 3$. Hence we may assume that $B_3$ is complete to $B_2\cup B_4$. Then, clearly  $\theta(G[B_2\cup B_3\cup B_4\cup \{v_3,v_4\}])\leq 2$, and since $B_1\cup \{v_1,v_2\}$ is a clique, $\theta(G[B\cup C])\leq 3$.   $\diamond$

 \no{$(ii)$}:~If $L_3= \es$, then since $D\cup \{a_1,a_2,a_3\}$ is a clique, clearly, by \cref{theta-BC}, $\theta(G[L_3\cup B\cup C\cup D \cup \{a_1,a_2,a_3\}])\leq \theta(G[B\cup C])+1\leq 4.$  So we assume that $L_3\neq \es$, and let $L_3:=\{a^*\}$. Then since $\{a_1,a_2,a^*,v_4,v_3\}$ does not induce a  $P_2+P_3$, $a_1a^*\in E(G)$. Likewise, $a_3a^*\in E(G)$. Then, as in the proof of \cref{B1B2card}, we have  $|B_3|\leq 1$ and $|B_4|\leq 1$; $\{a^*\}$ is complete to $B_1$, and $\{a_1,a_2\}$ is complete to $B_3$.
Also, if there are nonadjacent vertices, say $d\in D$ and $b\in B_3$,  then, by \cref{D-a123}, $\{b, a_1, d,v_4,v_1\}$ induces a $\overline{P_2+P_3}$; so $D$ is complete to $B_3$. Thus, $\{a^*\}\cup B_1\cup\{a_3\}$, $B_3\cup D\cup \{a_1,a_2\}$, $B_2\cup \{v_2,v_3\}$ and $B_4\cup \{v_1,v_4\}$ are cliques, and hence $\theta(G[L_3\cup B\cup C\cup D \cup \{a_1,a_2,a_3\}])\leq 4$. \end{proof}

\begin{lemma}\label{H1-L1}
If $L_1\neq \es$, then $\theta(G)\leq \alpha(G)+3$.
\end{lemma}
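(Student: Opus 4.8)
The plan is to fix a vertex $a_4\in L_1$ and reduce, by the symmetry of the hypothesis under interchanging $a_1$ and $a_3$ (all of \cref{G1-A123,G1-BD-theta,H1-theta} are stated symmetrically in $a_1\leftrightarrow a_3$), to the case $a_1a_4\in E(G)$; thus $a_4\in A_4$ with $a_1a_4,a_2a_4\in E(G)$. The goal is a clique partition of $V(G)$ into $\alpha(G)+3$ cliques, obtained in the form $\theta(G)\le \theta(G[B\cup C])+\theta(G[V(G)\sm(B\cup C)])$, where the first term is at most $3$ by \cref{theta-BC}; it then suffices to bound the second term by $\alpha(G)$.

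First I would record the structure forced by $a_4$. Applying \ref{a1a2x} and \ref{AXonenbd} to the edge $a_1a_4$ between the consecutive classes $A_1,A_4$ shows that every vertex of $X_1$ is adjacent to exactly one of $a_1,a_4$, and since $a_1$ has at most one neighbour in $X_1$, $a_4$ is complete to $X_1$ with at most one exception. Applying \ref{a1a3cmnnbda2} to the edge $a_2a_4$ between the opposite classes $A_2,A_4$ gives that every vertex of $D\cup X_1$ has a neighbour in $\{a_2,a_4\}$, that at most one vertex of $X_1$ is complete to $\{a_2,a_4\}$, and bounds the number of vertices of $A_1,A_3$ missing $\{a_2,a_4\}$. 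Together with the observation that $D$ is complete to $a_4$ (otherwise, for $d\in D$ with $da_4\notin E(G)$, the set $\{v_1,a_1,a_4,v_4,d\}$ induces a $\overline{P_2+P_3}$, using \cref{D-a123}), we get that $D\cup\{a_1,a_2,a_3,a_4\}$ induces a clique. I would also invoke \cref{G1-A123,G1-BD-theta}, which bound $|A_1\sm\{a_1\}|$, $|A_3\sm\{a_3\}|$, $|B_1|,\dots,|B_4|$, $|L_2|,|L_3|$, and give $X_2=\es$ by \cref{H1-X2emp}; so $V(G)\sm(B\cup C)=A\cup D\cup X_1\cup T$.

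The remaining work is to show that $U:=G[A\cup D\cup X_1\cup T]$ is perfect: then $\theta(U)=\alpha(U)\le\alpha(G)$, and with $\theta(G[B\cup C])\le 3$ we conclude $\theta(G)\le\alpha(G)+3$. To prove $U$ is perfect I would use the Strong Perfect Graph Theorem; since $G$ is already $(P_2+P_3,\overline{P_2+P_3})$-free, the only induced odd holes or antiholes that could occur in $U$ have length $5$ or $7$. I would show any such subgraph $F$ of $U$ can, using that $T$ is anticomplete to $A$ (by \ref{ATstset}), $D$ is a clique complete to $\{a_1,a_2,a_3,a_4\}$ and to all but at most one vertex of $X_1$, each $A_i$, $X_1$ and $T$ is stable, and the cross-adjacencies are as constrained above, be extended by a suitable pair among $\{v_1,v_2,v_3,v_4,a_1,a_2,a_3,a_4\}$ to an induced $P_2+P_3$ or $\overline{P_2+P_3}$ in $G$, a contradiction.

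The main obstacle is precisely this last verification, i.e.\ controlling $U=G[A\cup D\cup X_1\cup T]$. The delicate feature is $X_1$: it is a (possibly large) stable set, so $\theta(G[X_1])=|X_1|$, and $U$ is \emph{not} expected to be perfect without the hypothesis $L_1\neq\es$ — the point of having $a_4$ is exactly that its near-completeness to $X_1$ and to $D$, combined with \cref{AXonenbd}, makes the neighbourhood structure around $X_1$ tame enough to exclude odd holes and antiholes, so that every large stable set of $U$ (which must use almost all of $X_1$) is matched by an equally large clique cover. Should $U$ fail to be perfect in full generality, the fallback is to peel off one or two further small cliques absorbing the at-most-one exceptional vertex of $X_1$ together with the bounded sets $A_1\sm\{a_1\}$, $A_3\sm\{a_3\}$, $L_2\cup L_3$, and to show the residue is perfect; the count still closes at $\alpha(G)+3$, but I expect the clean statement ``$G-(B\cup C)$ is perfect'' to hold once $L_1\neq\es$.
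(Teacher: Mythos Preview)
Your setup is correct up to and including $D$ being a clique complete to $\{a_1,a_2,a_3,a_4\}$, but you miss the single observation that makes the lemma short: once $a_4\in L_1$ with $a_1a_4\in E(G)$, the triple $a_4,a_1,a_2$ sits in $A_4,A_1,A_2$ and forms a triangle (since $a_1a_2,\,a_2a_4,\,a_1a_4\in E(G)$). This is a cyclic shift of the configuration $a_1,a_2,a_3$ used in the proof of \cref{H1-X2emp}, and running that same argument verbatim yields $X_1=\es$. Together with \cref{H1-X2emp} itself you get $X=\es$, so the set you call $U$ is just $G[A\cup D\cup T]$ --- there is no $X_1$ to control at all.

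Because you do not see $X_1=\es$, you are led to propose proving $G[A\cup D\cup X_1\cup T]$ perfect via the Strong Perfect Graph Theorem. This is a genuine gap: you do not carry out the odd-hole/antihole check, you hedge with an unverified ``fallback'', and in fact perfection of the full $U$ is neither needed nor established in the paper. The paper instead gives a direct clique cover of $A\cup D\cup T$ by a short case split: if $(A_2\sm\{a_2\})\cup(A_4\sm\{a_4\})=\es$ then $A_1\cup A_3\cup T$ is bipartite and anticomplete to $\{v_2,v_4\}$, so its clique cover number is at most $\alpha(G)-2$, and $D\cup\{a_2,a_4\}$ is one more clique; otherwise one shows $|A_1\sm\{a_1\}|\le 1$, $|A_3|\le 2$, and $D$ complete to $(A_1\sm\{a_1\})\cup(A_3\sm\{a_3\})$, so $\theta(G[A_1\cup A_3\cup D])\le 2$, while $A_2\cup A_4\cup T$ is bipartite and anticomplete to $\{v_1,v_3\}$, giving $\theta\le\alpha(G)-2$ there. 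Either way $\theta(G[A\cup D\cup T])\le\alpha(G)$, and \cref{theta-BC} finishes. No appeal to SPGT is needed.
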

\begin{proof}
Let $a_4\in L_1$. We may assume that  $a_1a_4\in E(G)$. Then as in the proof of \cref{H1-X2emp}, we have $X_1=\es$. Since $X_2=\es$ (by \cref{H1-X2emp}), by \cref{theta-BC}, it is enough to show that $\theta(G[A\cup D\cup T])\leq \alpha(G)$.
 Recall that $D$ is complete to $\{a_1,a_2,a_3\}$, and $D$ is a clique. Also, as in the proof of \cref{D-a123}, $D$ is complete to $\{a_4\}$.
 Now if $(A_2\sm \{a_2\})\cup (A_4\sm \{a_4\})=\es$, then by \ref{ATstset}, $A_1\cup A_3\cup T$ induces a bipartite graph, and is anticomplete to $\{v_2,v_4\}$; so  $\theta(G[A_1\cup A_3\cup T])\leq \alpha(G)-2$, and then, since $D\cup \{a_2,a_4\}$ is a clique, we conclude that $\theta(G[A\cup D\cup T])\leq \alpha(G)$. So we assume that $(A_2\sm \{a_2\})\cup (A_4\sm \{a_4\})\neq \es$. Note that, by \ref{A1A2onenbd}, $A_1\sm \{a_1\}$ is anticomplete to $\{a_2,a_4\}$; so by \ref{a1a3cmnnbda2}:(a), $|A_1\sm \{a_1\}|\leq 1$.  Also, if $|A_3|\geq 3$, then by \cref{G1-A123iii}, it follows that $A_4\sm\{a_4\}\neq \es$, and so for any $a\in A_4\sm\{a_4\}$,   by \ref{A1A2onenbd},  there are vertices $p,q\in A_3$ such that $ap,aq\notin E(G)$, and hence, again by \ref{A1A2onenbd} and \cref{G1-A123i}, $\{a,v_4,p,a_1,q\}$ induces a $P_2+P_3$; so $|A_3|\leq 2$. Next, if there are nonadjacent vertices, say $a_1'\in A_1\sm \{a_1\}$ and $d\in D$, then, by \cref{G1-A123i}, $\{v_1,a_1,a_3,a_1',d\}$ induces a $\overline{P_2+P_3}$; so  $D$ is complete to $A_1\sm \{a_1\}$. Likewise,  $D$ is complete to $A_3\sm \{a_3\}$.  Hence, by \cref{G1-A123i} and \cref{D-a123}, $\theta(G[A_1\cup A_3\cup D])\leq 2$. Also, by \ref{ATstset}, $A_2\cup A_4\cup T$ induces a bipartite graph, and is anticomplete to $\{v_1,v_3\}$; so  $\theta(G[A_2\cup A_4\cup T])\leq \alpha(G)-2$. Thus, $\theta(G[A\cup D\cup T])\leq \alpha(G)$. This proves \cref{H1-L1}.
\end{proof}

\begin{thm} \label{thm:H1c}
If $G$ is a $(P_2+P_3$, $\overline{P_2+P_3}$, $K_2+K_3)$-free graph that contains an  $H_1$, then $\theta(G)\leq \alpha(G)+3$.
\end{thm}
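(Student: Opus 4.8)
The plan is to break into the two cases already set up before the statement: either $L_1 \neq \es$ or $L_1 = \es$. The first case is immediate from \cref{H1-L1}, which gives $\theta(G) \le \alpha(G) + 3$ directly. So I would begin the proof with one line disposing of that case, and spend the rest of the argument assuming $L_1 = \es$. Under this assumption, recall from the discussion preceding \cref{G1-A123} that $|L_2| \le 1$ and $|L_3| \le 1$, so $|A_4| \le 2$; this is the structural simplification that makes the remaining case tractable.

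With $L_1 = \es$, the strategy is to produce a clique cover of $V(G)$ using at most $\alpha(G)+3$ cliques by partitioning $V(G)$ into three blobs and bounding $\theta$ on each. The natural partition is: the "core" $L_3 \cup B \cup C \cup D \cup \{a_1,a_2,a_3\}$, for which \cref{a1a2bcdclq} already gives $\theta \le 4$; the set $A_1 \cup A_3 \cup T$ together with whatever of $A_2, A_4, X_1$ remains; and I would need to show the leftover vertices can be absorbed so that the total is $\alpha(G)+3$. Concretely, since $A_i \cup T$ is a stable set for each $i$ (by \ref{ATstset}), $G[A_1 \cup A_3 \cup T]$ and $G[A_2 \cup A_4 \cup T]$ are each bipartite; the point is that $A_1 \cup A_3 \cup T$ is anticomplete to $\{v_2,v_4\}$ and $A_2 \cup A_4 \cup T$ is anticomplete to $\{v_1,v_3\}$, so each such bipartite piece has clique cover number at most $\alpha(G)-2$ after throwing in the two $C$-vertices it misses — this is the same trick used inside the proof of \cref{H1-L1}. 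One then has to handle $X_1$ and $X_2$: we know $X_2 = \es$ by \cref{H1-X2emp}, and $X_1$ is complete to $D$ by \ref{XDcom} and has limited interaction with $A_1, A_3$ by \ref{AXonenbd}, so $X_1$ should be foldable into one of the existing cliques or stable sets at no extra cost, or at worst cost one clique that is compensated by a saving elsewhere.

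I expect the main obstacle to be the bookkeeping in the subcase where $(A_2 \sm \{a_2\}) \cup L_2 \neq \es$: here \cref{G1-A123iii} forces $|A_1 \sm \{a_1\}| \le 1$ and $|A_3 \sm \{a_3\}| \le 1$, and \cref{G1-A123i,G1-A123ii} say these stray vertices are complete to $\{a_1\}$ or $\{a_3\}$ and to $(A_2\sm\{a_2\}) \cup L_2$, and (as in \cref{H1-L1}) complete to $D$; so $A_1 \cup A_3 \cup D$ can be covered by just $2$ cliques rather than contributing to the $\alpha(G)-2$ count. The delicate part is verifying that in every subcase the three blobs' bounds add to exactly $\alpha(G)+3$ and never overshoot — in particular, checking that when $D = \es$ or when various $A_i, B_i$ are small, the stable-set counting $\alpha(G)-2$ is still valid (it relies on there being a stable set of size $\alpha(G)$ disjoint from the bipartite piece, or on the piece itself being small). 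I would organize this as: first the subcase $(A_2\sm\{a_2\})\cup L_2 = \es$ (where $A_1 \cup A_3 \cup T$ is a single bipartite graph anticomplete to $\{v_2,v_4\}$, giving $\theta \le \alpha(G)-2$, plus $4$ for the core, plus absorbing $A_2 \cup A_4 \cup X_1$ — but $A_2=\{a_2\}$ here and $a_2,a_4$-type vertices join $D$'s clique), then the subcase where it is nonempty, using the completeness relations to merge $A_1\sm\{a_1\}$, $A_3\sm\{a_3\}$ into the core or into a $D$-clique. Throughout I would lean on \ref{A1A2onenbd} and \ref{a1a3cmnnbda2}:(a) to keep the stray sets of size at most one, which is what ultimately prevents the count from exceeding $\alpha(G)+3$.
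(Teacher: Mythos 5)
Your overall plan coincides with the paper's: dispose of $L_1\neq\es$ via \cref{H1-L1}, use \cref{a1a2bcdclq} for the core $L_3\cup B\cup C\cup D\cup\{a_1,a_2,a_3\}$, cover $A_i\cup A_{i+2}\cup T$-type pieces as bipartite graphs anticomplete to two vertices of $C$, and split on whether $(A_2\sm\{a_2\})\cup L_2$ is empty, invoking \cref{G1-A123iii} to keep the strays in $A_1$ and $A_3$ down to one vertex each. This is exactly the paper's decomposition.

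However, there is a genuine gap where you wrote that $X_1$ ``should be foldable into one of the existing cliques or stable sets at no extra cost, or at worst cost one clique that is compensated by a saving elsewhere.'' That is precisely where the real work of this proof lies, and it cannot be waved away. In the subcase of Case~1 where $(A_1\sm\{a_1\})\cup(A_3\sm\{a_3\})\neq\es$, the cliques needed for the strays push the core cost up to $\theta(G[A\cup B\cup C\cup D])\leq 6$, so the remaining piece $T\cup X_1$ must be covered by $\alpha(G)-3$ cliques, not $\alpha(G)-2$; your count of ``$\alpha(G)-2$ after throwing in the two $C$-vertices'' overshoots to $\alpha(G)+4$ here. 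To get $\alpha(G)-3$ one must show that $T\cup X_1$ is anticomplete to the three-vertex stable set $\{a_1,v_2,v_4\}$, i.e.\ that $X_1$ is anticomplete to $a_1$ -- this is the paper's Claim~\ref{thm:H1c}.1, whose proof is a nontrivial forbidden-subgraph argument using \cref{G1-A123i,G1-A123ii}. Similarly, in the complementary subcase one needs the device $X_1':=\{x\in X_1\mid a_1x\in E(G)\}$ with $|X_1'|\leq 1$ and $X_1'$ absorbed into the clique $A_1\cup A_3\cup D\cup X_1'$; and in Case~2 with adjacent strays $a_1'a_3'$ one needs that $X_1$ is complete to $\{a_2\}\cup L_3$ and $|X_1|\leq 2$ (so that $A_2\cup L_3\cup D\cup X_1$ is covered by two cliques). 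Each subcase requires a different, separately proved structural fact about the adjacencies of $X_1$, and \ref{AXonenbd} alone does not supply them. Until those claims are established, the bookkeeping you flag as ``the delicate part'' does not close.
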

\begin{proof}
Let $G$ be a $(P_2+P_3$, $\overline{P_2+P_3}$, $K_2+K_3)$-free graph than contains an    $H_1$. We may consider an $H_1$ with vertices and edges as in Figure~\ref{fig:partfig2}. Let $C:=\{v_1,v_2,v_3,v_4\}$.  We partition $V(G)\sm C$ as in Section~\ref{genprop},  and we use the properties in Section~\ref{genprop}.  We partition $A_4$ as above, and we use \cref{G1-A123,G1-BD-theta,H1-theta,H1-L1}. Recall that, by \cref{H1-X2emp}, $X_2=\es$. By   \ref{a1a3cmnnbda2} and \ref{A1A2onenbd}, we have $|A_2|\leq 2$.
 By \cref{H1-L1},  we may assume that $L_1=\es$; so $|A_4|\leq 2$.  Now we prove the theorem in two cases as follows:

\smallskip
\no{\bf Case~1}~{\it $L_2\cup (A_2\sm \{a_2\})\neq \es$.}

First suppose that $(A_1\sm \{a_1\})\cup (A_3\sm \{a_3\})= \es$. Define $X_1':=\{x\in X_1\mid a_1x\in E(G)\}$. Then it follows from \ref{a1a2x} that $X_1'$ is complete to $\{a_3\}$; so by \ref{AXonenbd}, $|X_1'|\leq 1$. Note that $T\cup (X\sm X_1')$ induces a bipartite graph (by \ref{ATstset} and \ref{Xstset}), and is anticomplete to $\{a_1,v_2,v_4\}$; so $\theta(G[T\cup (X\sm X_1')])\leq \alpha(G)-3$. Since $|A_2|\leq 2$ and $|A_4|\leq 2$, by \ref{a1b1nonnbd}, $\theta(G[A_2\cup A_4])\leq 2$. By \ref{XDcom} and \cref{D-a123}, $\theta(G[A_1\cup A_3\cup D\cup X_1'])\leq 1$. Hence, by \cref{theta-BC}, $\theta(G)\leq \alpha(G)+3$.

Next suppose that $(A_1\sm \{a_1\})\cup (A_3\sm \{a_3\})\neq \es$. By \cref{G1-A123iii}, $|A_1\sm \{a_1\}|\leq 1$ and $|A_3\sm \{a_3\}|\leq 1$. Thus, by \cref{G1-A123ii}, $(A_2\sm \{a_2\})\cup (A_3\sm \{a_3\})$  and $(A_1\sm \{a_1\})\cup L_2$ are cliques. So, by \cref{a1a2bcdclq}, $\theta(G[A\cup B\cup C\cup D])\leq 6$. Next:

 \begin{claim}\label{H1-X1a} $X_1$ is anticomplete to $\{a_1\}$. \end{claim}

\vspace{-0.2cm}
\no{\it Proof of \cref{H1-X1a}}.~Suppose that there is a vertex $x\in X_1$ such that $xa_1\in E(G)$.  By \ref{a1a2x}, $a_2x\notin E(G)$ and $a_3x\in E(G)$.
Let $a'\in (A_1\sm \{a_1\})\cup (A_3\sm \{a_3\})$. Then, by \cref{G1-A123i},  since $\{a_1,v_1,a',a_3, x\}$  or $\{a_1,a_3,v_3,a',x\}$ does not induce a $\overline{P_2+P_3}$, we have $a'x\in E(G)$.  Then,  by \cref{G1-A123ii} and by \ref{a1a2x},  $\{x\}$ is anticomplete to $(A_2\sm \{a_2\})\cup L_2$. But, then for any $a^*\in (A_2\sm \{a_2\})\cup L_2$, one of $\{a^*,v_2, a_1,a_3,x\}$, $\{a^*,v_4, a_1,a_3,x\}$ induces a $K_2+K_3$. So  $X_1$ is anticomplete to $\{a_1\}$. $\Diamond$

   By \cref{H1-X1a}, since $T\cup X_1$ induces a bipartite graph (by \ref{ATstset} and \ref{Xstset}), and is anticomplete to $\{a_1,v_2,v_4\}$,  $\theta(G[T\cup X_1])\leq \alpha(G)-3$. Hence $\theta(G)\leq \theta(G[A\cup B\cup C\cup D])+\theta(G[T\cup X_1])\leq 6+(\alpha(G)-3)=\alpha(G)+3$.

 \smallskip
\no{\bf Case~2}~{\it $L_2\cup (A_2\sm \{a_2\})=\es$.}

If $(A_1\sm \{a_1\})\cup (A_3\sm \{a_3\})$ is a stable set, then since  $A_1\sm \{a_1\}\cup (A_3\sm \{a_3\})\cup T\cup X_1$ induces a bipartite graph (by \ref{ATstset} and \ref{Xstset}), and is anticomplete to $\{v_2,v_4\}$, we see that $\theta(G)\leq \theta(G[A_1\sm \{a_1\}\cup (A_3\sm \{a_3\})\cup X_1\cup T]) \leq \alpha(G)-2$, and we conclude the proof by using \cref{a1a2bcdclq}. So  we may assume that,  by \ref{ATstset}, there are adjacent vertices, say $a_1'\in A_1\sm \{a_1\}$ and $a_3'\in A_3\sm \{a_3\}$.

\vspace{-0.25cm}
\begin{claim}\label{X1le2} $X_1$ is complete to $ \{a_2\}\cup L_3$. Moreover, $|X_1|\leq 2$.
\end{claim}

\vspace{-0.2cm}
\no{\it Proof of \cref{X1le2}}.~  Suppose there is a vertex $x\in X_1$ such that $xa_2\notin E(G)$.  Then, by \ref{a1a2x}, $a_1x,a_3x\in E(G)$. Then, by \cref{G1-A123i}, since $\{v_1,a_1,a_3,a_1',x\}$ does not induce a $\overline{P_2+P_3}$, $a_1'x\in E(G)$. But then, by \cref{G1-A123ii},  $\{a_2,v_2,a_1',x,a_3'\}$ induces   a  $K_2+K_3$ or  a $P_2+P_3$. So we  conclude that $X_1$ is complete to $\{a_2\}$. Hence, if there are nonadjacent vertices, say $x\in X_1$ and $a\in L_3$, then $\{a,v_4,x,a_2,v_2\}$ induces a $P_2+P_3$; so $X_1$ is complete to $L_3$. Next, if $|X_1|\geq 3$, by \ref{AXonenbd}, there is a vertex $x\in X_1$ such that $xa_1',xa_3'\notin E(G)$, then, by \cref{G1-A123ii}, $\{a_1',a_3',x,a_2,v_2\}$ induces a $P_2+P_3$; so  $|X_1|\leq 2$. $\Diamond$

Now, by \cref{X1le2}, by \cref{D-a123} and \ref{XDcom}, $\theta(G[A_2\cup L_3\cup D\cup X_1])\leq 2$. Since $A_1\cup A_3\cup T$ induces a bipartite graph (by \ref{ATstset}), and is anticomplete to $\{v_2,v_4\}$, $\theta(G[A_1\cup A_3\cup T])\leq \alpha(G)-2$, so by \cref{theta-BC}, we are done. This completes the proof of \cref{thm:H1c}.
\end{proof}

\subsubsection{($P_2+P_3$, $\overline{P_2+P_3}$, $K_{2,3}$, $\overline{H_1}$)-free graphs that contain a banner}

\begin{thm}\label{thm:banner}
If $G$ is a ($P_2+P_3$, $\overline{P_2+P_3}$, $K_{2,3})$-free graph  that contains a  banner, then $G$ is a good graph.
\end{thm}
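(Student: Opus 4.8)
The plan is to split on whether $G$ contains an induced $\overline{H_1}$: the affirmative case is disposed of by complementation together with \cref{thm:H1c}, and the $\overline{H_1}$-free case is treated by a structural analysis around the $C_4$ that sits inside the banner.

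Suppose first that $G$ contains an $\overline{H_1}$. Then $\overline{G}$ contains an $H_1$; moreover $\overline{G}$ is $(P_2+P_3,\overline{P_2+P_3})$-free, and since $G$ is $K_{2,3}$-free and $\overline{K_{2,3}}=K_2+K_3$, the graph $\overline{G}$ is $(K_2+K_3)$-free. Hence \cref{thm:H1c} applies to $\overline{G}$ and gives $\theta(\overline{G})\leq\alpha(\overline{G})+3$, that is, $\chi(G)\leq\omega(G)+3$, so $G$ is good by condition $(e)$; note that the banner is not needed for this case. So from now on I may assume $G$ is $(P_2+P_3,\overline{P_2+P_3},K_{2,3},\overline{H_1})$-free and contains a banner.

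Since a banner is a $C_4$ together with a pendant vertex, $G$ contains a $C_4$, say with vertex set $C=\{v_1,v_2,v_3,v_4\}$ and with the pendant vertex of the banner lying in $A_1$; in particular $A_1\neq\emptyset$. I partition $V(G)=A\cup B\cup C\cup D\cup X\cup T$ as in \cref{genprop} and use the properties proved there, noting that \ref{prop-K23}:\ref{a1b1b3nbd} is available because $G$ is $K_{2,3}$-free. After handling $\omega(G)\leq 2$ by a direct coloring into four stable sets (as at the start of the proof of \cref{thm:k23}), the core of the argument is to combine the pendant vertex of $A_1$ with $\overline{H_1}$-freeness so as to forbid the bad adjacency patterns on triples having one vertex in each of $A_1,A_2,A_3$, and thereby to determine the edges between the ``shallow'' sets $A_1,A_2,A_3$ and the ``deep'' sets $B,D,X$. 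I expect one of two situations to result: either the forced completeness relations exhibit a universal vertex or a pair of comparable vertices (conditions $(b)$, $(a)$ — typically a vertex of $D\cup X$ complete to nearly all of $G$, or two vertices of some $A_i$ with the same neighborhood); or the graph is confined, up to the perfect, clique-bounded set $D$ (recall \ref{Dper}: $\chi(G[D])=\omega(G[D])\leq\omega(G)-2$), to $A\cup B\cup C$, where each $A_i\cup T$ and each $X_j$ is stable (by \ref{ATstset} and \ref{Xstset}) and the sets $B_i$ are tightly controlled, both in size and in their neighborhoods (from \ref{a1b1nonnbd}, \ref{abtnbBD}, \ref{bibi+2cmnnbd}, \ref{BXantcomp} and \ref{prop-K23}:\ref{a1b1b3nbd}).

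In the second situation I would finish as in the proof of \cref{thm:k23}: cover $V(G)\setminus D$ by a bounded number of stable sets — merging the $A_i\cup T$'s, the $X_j$'s, the near-singleton $B_i$'s and the $v_i$'s so that every color class stays independent — and add $\chi(G[D])\leq\omega(G)-2$. If $V(G)\setminus D$ admits such a cover by at most five stable sets this already yields $\chi(G)\leq\omega(G)+3$, i.e.\ $(e)$; when five classes do not quite suffice, the obstruction (an oversized $B_i$, $D$ or $X_j$, or an unavoidable edge between two prospective classes) should instead produce a comparable pair, a vertex of degree at most $\omega(G)+2$, or three stable sets whose deletion drops the clique number by two — that is, one of $(a)$, $(c)$, $(d)$. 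The step I expect to be the main obstacle is precisely this structural case analysis: $K_{2,3}$-freeness and $\overline{H_1}$-freeness constrain the edges between the $A_i$ and the deep sets $B_i,D,X_j$ only locally (``at most one neighbor'' or ``at most one nonneighbor'' of each kind), so turning these local constraints into a global partition into few stable sets, or into a global drop of $\omega$, demands a long and careful bookkeeping of subcases; and, in the spirit of the remark preceding \cref{thm:k23}, one should be ready to pass to $\overline{G}$ whenever its antineighborhood structure is more transparent than the neighborhood structure of $G$.
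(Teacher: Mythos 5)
Your reduction of the case where $G$ contains an $\overline{H_1}$ is correct and is exactly what the paper does: $\overline{G}$ is $(P_2+P_3,\overline{P_2+P_3},K_2+K_3)$-free (since $\overline{K_{2,3}}=K_2+K_3$) and contains an $H_1$, so \cref{thm:H1c} gives $\theta(\overline{G})\le\alpha(\overline{G})+3$, i.e.\ $\chi(G)\le\omega(G)+3$. You also correctly identify the overall framework for the remaining case: partition around the $C_4$ of the banner, exploit the pendant vertex $a^*\in A_1$, and aim either at a cover of $V(G)\setminus D$ by few stable sets plus $\chi(G[D])\le\omega(G)-2$, or at one of the other goodness conditions. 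One simplification you missed: $K_{2,3}$-freeness immediately forces $X=\emptyset$ (a vertex of $X_j$ together with $C$ induces a $K_{2,3}$), so the ``deep'' sets reduce to $B$ and $D$ only.

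However, the $\overline{H_1}$-free case — which is the entire substance of the theorem — is not proved; it is only forecast. Phrases such as ``I expect one of two situations to result,'' ``the obstruction \ldots should instead produce a comparable pair,'' and ``demands a long and careful bookkeeping of subcases'' defer precisely the step where the argument could fail. The paper's proof at this point requires several nontrivial moves that your sketch does not supply: (i) a case split on which of the sets $B_i$, $B_{i+2}$ are simultaneously nonempty, with a proof (using $\overline{H_1}$-freeness and the pendant vertex $a^*$) that $B_1$ is complete to $B_3$ and hence $|B_1|=|B_3|=1$; (ii) a bipartition claim for $A_1\cup A_4\cup B_1\cup\{v_3\}$ when $A_4\neq\emptyset$; (iii) in the case $B_3\cup B_4=\emptyset$, the use of the no-comparable-vertices assumption on $v_2,v_4$ to force $A_4\neq\emptyset$, followed by a delicate split of $D$ into $N(a_4)\cap D$ (a clique complete to three vertices, hence colorable with $\omega(G)-3$ colors) and $D\setminus N(a_4)$ (shown to have size at most one). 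None of these is a routine consequence of the ``at most one neighbor/nonneighbor'' facts you cite; each needs a specific induced-subgraph argument. As it stands the proposal is a plausible plan with the right first step, but it has a genuine gap where the main work of the proof should be.
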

\begin{proof}Let $G$ be a ($P_2+P_3$, $\overline{P_2+P_3}$, $K_{2,3})$-free graph  that contains a banner. We may consider a banner  with vertices and edges as in Figure~\ref{fig:partfig1}. Let $C:=\{v_1,v_2,v_3,v_4\}$.  We partition $V(G)\sm C$ as in Section~\ref{genprop},  and we use the properties in Section~\ref{genprop}. Since $G$ is $K_{2,3}$-free, clearly $X=\es$. Recall that, by the definition of $A_1$, we have $a^*\in A_1$, and so $A\neq \es$. We may assume that $G$ has no pair of comparable vertices.  Moreover, by \cref{thm:H1c}, we may assume that $G$ is $\overline{H_1}$-free. Also:

\begin{claim}\label{AB1B2bipart}
 Suppose that $A_4\neq \es$. Then the set $A_1\cup A_4\cup B_1\cup \{v_3\}$ can be partitioned into two stable sets. Likewise, $A_3\cup A_4\cup B_2\cup \{v_1\}$ can be partitioned into two stable sets.
\end{claim}

\vspace{-0.2cm}
\no{\it Proof of \cref{AB1B2bipart}}.~ If $A_4$ is anticomplete to $B_1$, then by \ref{a1b1nonnbd}, $|A_4|=1$ and $|B_1|\leq 1$, and so $A_1\cup \{v_3\}$ and $A_4\cup B_1$ are stable sets, and we are done. So, we may assume that there are adjacent vertices, say  $a_4\in A_4$ and $b_1\in B_1$.
Then, by \ref{abtnbBD} and \ref{prop-K23}:\ref{a1b1b3nbd}, $\{a_4\}$ is anticomplete to $B_1\sm \{b_1\}$, and $A_4\sm \{a_4\}$ is anticomplete to $\{b_1\}$. So $(A_4\sm \{a_4\})\cup \{b_1\}$ is a stable set, and by \ref{a1b1nonnbd}, $|B_1\sm \{b_1\}|\le 1$. Now we show that $A_1\cup (B_1\sm \{b_1\})\cup \{a_4,v_3\}$ is a stable set. First, if there is a vertex $a_1\in A_1$ such that $a_1a_4\in E(G)$, then $\{a_1,a_4,v_4,v_1,b_1\}$ induces   a $\overline{P_2+P_3}$ or  a $K_{2,3}$; so $A_1\cup \{a_4\}$ is a stable set. Next, if there are adjacent vertices, say $a\in A_1$ and $b\in B_1\sm\{b_1\}$, then $\{a_4,v_4,a,b,v_2\}$ induces a $P_2+P_3$. Thus, $A_1\cup (B_1\sm \{b_1\})\cup \{a_4,v_3\}$ is a stable set. This proves Claim~\ref{AB1B2bipart}. $\Diamond$

Now we split the proof into two cases based on the subsets of $B$.

\smallskip
\no{\bf Case~1}  {\it 	Suppose that $B_i$ and $B_{i+2}$ are nonempty, for some $i\in \{1,2,3,4\}$.}

We let $i=1$, and we claim the following:

\vspace{-0.2cm}
\begin{claim}\label{a1b1g1forbid} $B_1$ is complete to $B_3$, and $B_2$ is complete to $B_4$.
\end{claim}

\vspace{-0.2cm}
 \no{\it Proof of \cref{a1b1g1forbid}}.~Suppose there are nonadjacent vertices, say $b\in B_1$ and $b'\in B_3$. Then since  $\{b',v_3,a^*,v_1,b\}$ does not induce a $P_2+P_3$,   either $ba^*\in E(G)$ or $b'a^*\in E(G)$.
 If $ba^*\in E(G)$, then since $\{b',v_4,a^*,b,v_2\}$ does not induce a $P_2+P_3$, we have $b'a^*\in E(G)$, and then  $\{a^*,b,b'\}\cup C$ induces an $\overline{H_1}$. So we may assume that $ba^*\notin E(G)$ and $b'a^*\in E(G)$. But then $\{b,v_2,a^*,b',v_4\}$ induces a $P_2+P_3$. So $B_1$ is complete to $B_3$. Likewise,  $B_2$ is complete to $B_4$. This proves Claim~\ref{a1b1g1forbid}. $\Diamond$

By \ref{prop-K23}:\ref{a1b1b3nbd} and Claim~\ref{a1b1g1forbid}, we have $|B_1|= 1= |B_3|$, $A_1$ is anticomplete to $B_3$, and $A_3$ is anticomplete to $B_1$. Thus $S_1:=A_1\cup B_3\cup \{v_2\}$ and $S_2:=A_3\cup B_1\cup \{v_4\}$ are stable sets.  Also, by \ref{ATstset}, $S_3:=A_2\cup T\cup \{v_1,v_3\}$ is a stable set. Now, if $B_2,B_4\neq \es$ or if  $B_2\cup B_4 =\es$, then as in the previous argument, $B_4$  and $A_4\cup B_2$ are stable sets, and hence    $\chi(G[V(G)\sm D])\leq 5$. Then, by \ref{Dper}, we conclude that $\chi(G)\leq \omega(G)+3$. So we may assume that   $B_2\neq \es$  and $B_4=\es$. If $A_4\neq \es$, by \cref{AB1B2bipart}, $A_4\cup B_2$ can be partitioned into two stable sets, and so  $\chi(G[V(G)\sm D])\leq 5$. Then again by \ref{Dper},  we conclude that $\chi(G)\leq \omega(G)+3$. So we may assume that $A_4=\es$. Now note that, since $A_4\cup B_4=\es$, $V(G)\sm (S_1\cup S_2\cup S_3)=B_2\cup D$. Then, since $\{v_2,v_3\}$ is complete to $B_2\cup D$, we have $\omega(G[B_2\cup D]) \leq \omega(G)-2$, and hence $G$ is nice. This proves the theorem in Case~1.

\medskip
\no{\bf Case~2}  {\it Suppose that, for each $i\in \{1,2,3,4\}$, at least one of $B_i$, $B_{i+2}$ is empty.}

We may assume that $B_3\cup B_4=\es$. Since $N(v_2)=A_2\cup B_1\cup B_2\cup D\cup \{v_1,v_3\}$ and $N(v_4)=A_4\cup D\cup \{v_1,v_3\}$, and since $G$ has no pair of comparable vertices, we have $A_4\neq \es$; let $a_4\in A_4$.
Now if $A_2=\es$, then since  $T\cup \{v_2,v_4\}$ is a stable set (by \ref{ATstset}), by \cref{AB1B2bipart}, $\chi(G[V(G)\sm D])\leq 5$, and so, by \ref{Dper}, we conclude that $\chi(G)\leq \omega(G)+3$. So we may assume that $A_2\neq \es$. First suppose that either $\{a_4\}$ is anticomplete to $A_2$ or $\{a_4\}$ is anticomplete to $B_1$.
Now if $\{a_4\}$ is anticomplete to $A_2$, then by \ref{a1b1nonnbd}, $\{a_4\}$ is complete to $B_1$; so by \ref{prop-K23}:\ref{a1b1b3nbd}, $|B_1|\leq 1$, and if $\{a_4\}$ is anticomplete to $B_1$, then by \ref{a1b1nonnbd}, again $|B_1|\leq 1$. In any case, by \ref{ATstset}, $A_2\cup B_1\cup \{v_3\}$ induces a bipartite graph. Since $A_1\cup T\cup \{v_2,v_4\}$ is a stable set (by \ref{ATstset}), by \cref{AB1B2bipart}, $\chi(G[V(G)\sm D])\leq 5$, and so by \ref{Dper}, we have $\chi(G)\leq \omega(G)+3$.
So we may assume that there are vertices $a_2\in A_2$ and $b_1\in B_1$ such that $a_2a_4,a_4b_1\in E(G)$. Then, by \ref{a1b1nonnbd}, $B_1\sm \{b_1\}$ is a stable set. Now, for any $d\in D$, since $\{a_4,b_1,v_1,v_4,d\}$ does not induce a $\overline{P_2+P_3}$, $\{b_1\}$ is anticomplete to $D\sm N(a_4)$, and $\{b_1\}$ is complete to $N(a_4)\cap D$.   By \ref{a1a3cmnnbda2}:\ref{X2DA1A3}, $D\sm N(a_4)$ is complete to $\{a_2\}$, and hence by \ref{abtnbBD}, $D\sm N(a_4)$ and $N(a_4)\cap D$ are cliques. So, since $N(a_4)\cap D$ is complete to $\{b_1,v_1,v_2\}$, $\chi(G[N(a_4)\cap D])\leq\omega(G)-3$. Also, if there are vertices, say $d,d'\in D\sm N(a_4)$, then $\{a_2,a_4,v_4,d,d'\}$ induces a $\overline{P_2+P_3}$; so $|D\sm N(a_4)|\leq 1$. Thus, $(D\sm N(a_4))\cup  \{b_1\}$ is a stable set. Then, by \ref{ATstset}, since $A_1\cup T\cup \{v_2,v_4\}$, $B_1\sm \{b_1\}$ and $A_2\cup \{v_3\}$ are stable sets, by \cref{AB1B2bipart}, $\chi(G[V(G)\sm (N(a_4)\cap D)])\leq 6$, and we conclude that $\chi(G)\leq 6+(\omega(G)-3) = \omega(G)+3$. This completes the proof of the theorem.
\end{proof}

\subsection{($P_2+P_3$, $\overline{P_2+P_3}$, $K_{2,3}$, banner)-free graphs that contain an $H_2$}

\begin{thm}\label{thm:5apple}
If  $G$ is a ($P_2+P_3$, $\overline{P_2+P_3}$, $K_{2}+ K_{3}$, co-banner)-free graph that contains an $\overline{H_2}$, then $\overline{G}$ is a good graph.
\end{thm}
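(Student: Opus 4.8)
The statement is the complement form of the assertion that every $(P_2+P_3,\overline{P_2+P_3},K_{2,3},\text{banner})$-free graph that contains an $H_2$ is good: since $\overline{K_2+K_3}=K_{2,3}$, $\overline{\text{co-banner}}=\text{banner}$ and $\overline{\overline{H_2}}=H_2$, the graph $H:=\overline G$ is $(P_2+P_3,\overline{P_2+P_3},K_{2,3},\text{banner})$-free and contains an $H_2$, and it suffices to prove that $H$ is good. So I would run the argument in parallel with the proof of \cref{thm:banner}. First dispose of the easy reductions: if $\omega(H)\le 2$ then $H$ is triangle-free, so $\chi(H)\le 4\le\omega(H)+3$ by the result of Randerath et al.~\cite{RST02}; if $H$ has a pair of comparable vertices, a universal vertex, a nice vertex, or is a nice graph, then $H$ is good; and if $H$ contains an $\overline{H_1}$, then, complementing and applying \cref{thm:H1c} exactly as in \cref{thm:banner}, $\chi(H)\le\omega(H)+3$. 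So from now on assume $\omega(H)\ge 3$, that $H$ has none of the first four features of a good graph, and that $H$ is $\overline{H_1}$-free; the goal becomes $\chi(H)\le\omega(H)+3$.

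Next I would fix a copy of $H_2$ in $H$, take its induced $C_4$ as $C=\{v_1,v_2,v_3,v_4\}$, and set up the partition $V(H)=A\cup B\cup C\cup D\cup X\cup T$ of Section~\ref{genprop}. Since $H$ is $K_{2,3}$-free, $X=\es$, and all the properties \ref{ATstset}--\ref{AXonenbd} together with \ref{prop-K23}:\ref{a1b1b3nbd} are available. The extra input is the location, among the cells $A_i,B_i$, of the vertices of the fixed $H_2$ that lie outside $C$: this forces a prescribed pattern of nonempty cells. Using it together with banner-freeness and $\overline{H_1}$-freeness, I would build the structural backbone, mirroring the claims of \cref{thm:banner}: sets of the kind appearing in Claim~\ref{AB1B2bipart} split into two stable sets each; whenever $B_i$ and $B_{i+2}$ are both nonempty they are complete to each other (the analogue of Claim~\ref{a1b1g1forbid}, obtained from the forbidden $\overline{H_1}$), hence singletons by \ref{prop-K23}:\ref{a1b1b3nbd}, with the relevant $A$'s anticomplete to them; and the no-comparable-vertices hypothesis, applied to $v_1,v_3$ against $v_2,v_4$ and to the distinguished vertices of the $H_2$, forces still more cells to be nonempty.

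From here I would split into cases according to which of $B_1,\dots,B_4$ are empty, as in \cref{thm:banner}; the copy of $H_2$ limits which cases occur and supplies an extra nonempty cell to exploit. In each case the target is to cover $V(H)\sm D$ by at most five stable sets — assembled from the $A_i\cup T$ and the $C$-vertices (by \ref{ATstset}), the singleton $B_i$'s, and the two-stable-set splittings above — so that, by \ref{Dper}, $\chi(H)\le\chi(H[D])+5\le(\omega(H)-2)+5=\omega(H)+3$; in a stubborn case one instead removes only a subset of $D$ of clique number at most $\omega(H)-3$, such as $N(a)\cap D$ for a distinguished vertex $a$ (controlled by \ref{abtnbBD} and \ref{a1a3cmnnbda2}), absorbs the rest into stable sets as in the endgame of Case~2 in the proof of \cref{thm:banner}, or exhibits three pairwise disjoint stable sets whose deletion drops $\omega(H)$ by $2$ and concludes that $H$ is a nice graph. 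I expect the main obstacle to be the case in which $D$ is large and meets several of the $A_i,B_i$ at once: there one must bound the clique number of the leftover part of $D$ sharply while also controlling the edges between $D$ and the distinguished vertices of the $H_2$, which may need a further split on whether such a vertex is complete, anticomplete, or mixed to $D$ (the analogue of claims like \cref{A1'-ac-X2} and \cref{B1acB2} in the proof of \cref{thm:k23}). Making the colour count land on exactly $\omega(H)+3$ — that is, showing that the few ``stray'' vertices contributed by the $H_2$ can always be folded into stable sets already in hand — is the crux; the rest is a routine application of Section~\ref{genprop}.
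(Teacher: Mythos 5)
Your reduction to the complement is sound as far as it goes, but the plan collapses at its first concrete step: you propose to ``fix a copy of $H_2$ in $H:=\overline{G}$ and take its induced $C_4$,'' and $H_2$ contains no $C_4$ at all. The graph $\overline{H_2}$ of Figure~\ref{fig:partfig2} is the $C_4$ on $\{v_1,v_2,v_3,v_4\}$ together with two \emph{adjacent} vertices $b_1\in B_1$ and $b_2\in B_2$ (this is exactly how it is used in the proofs of \cref{thm:5apple} and of \cref{BiBi+1}), so its complement $H_2$ is the $5$-apple: an induced $C_5$ on $v_1,v_3,b_1,v_4,b_2$ with $v_2$ pendant at $v_4$. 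Its only cycle is that $C_5$, so there is no $C_4$, induced or otherwise, inside the copy of $H_2$; and nothing in the hypotheses forces $H$ to contain a $C_4$ elsewhere (the $5$-apple itself is a ($P_2+P_3$, $\overline{P_2+P_3}$, $K_{2,3}$, banner)-free graph containing an $H_2$ and no $C_4$). Hence the partition of \cref{genprop} cannot be anchored where you want it, and everything downstream in your outline --- locating the $H_2$-vertices among the cells $A_i,B_i$, the analogues of \cref{AB1B2bipart} and \cref{a1b1g1forbid}, the five-stable-set cover of $V(H)\setminus D$ --- is built on this nonexistent $C_4$.

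The paper sidesteps this by never leaving $G$: it anchors the \cref{genprop} partition on the $C_4$ inside the $\overline{H_2}$ (so that $b_1\in B_1$ and $b_2\in B_2$), uses co-banner-freeness via \ref{ABst} and \ref{AiB+2com} to show $A_1=A_3=\emptyset$ and to bound $B_3,B_4,X_2$, and then proves a clique-cover statement --- $\theta(G[C\cup A\cup X_1\cup T])\le\alpha(G)$ together with $\theta(G[B\cup D\cup X_2])\le 3$, or else exhibits $v_2$ as a nice vertex of $\overline{G}$, or a universal/comparable pair in $\overline{G}$ --- which is precisely what ``$\overline{G}$ is good'' unpacks to. To salvage your approach you would have to either carry the argument out in $G$ as the paper does, or develop a whole new partition theory relative to a $C_5$ (or a $5$-apple) in $H$, which is a substantially different and unproved piece of structure. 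Beyond this central flaw, the quantitative core of your sketch (that $V(H)\setminus D$ always splits into five stable sets, or that the leftover part of $D$ has clique number at most $\omega(H)-3$) is only stated as an expectation rather than established.
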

\begin{proof} Let $G$ be a ($P_2+P_3$, $\overline{P_2+P_3}$, $K_{2}+ K_{3}$, co-banner)-free graph that contains an $\overline{H_2}$. We may consider an $\overline{H_2}$   with vertices and edges as in Figure~\ref{fig:partfig2}. Let $C:=\{v_1,v_2,v_3,v_4\}$.  We partition $V(G)\sm C$ as in Section~\ref{genprop},  and we use the properties in Section~\ref{genprop}.
Then, clearly, $b_1\in B_1$ and $b_2\in B_2$.  So, by \ref{AiB+2com}, we have $|B_3| \leq 1$ and $|B_4|\leq 1$. Moreover,  if there is a vertex, say $a_1\in A_1$, then, by \ref{ABst}, $a_1b_1\notin E(G)$, and by \ref{AiB+2com}, $a_1b_2\in E(G)$, and then $\{a_1,v_1,v_2,b_2,b_1\}$ induces a $\overline{P_2+P_3}$; so $A_1=\es$.  Likewise, $A_3=\es$. Hence $A=A_2\cup A_4$. Next if there are   vertices, say $x,x'\in X_2$, then, by \ref{Xstset}, $xx'\notin E(G)$, and then, by \ref{BXantcomp},  $\{b_1,b_2,x,v_4,x'\}$ induces a  $P_2+P_3$; so $|X_2|\leq 1$. Moreover, we have the following:
 \begin{claim}\label{5app-theta} \mbox{ $\theta(G[C\cup A\cup X_1\cup T])\leq \alpha(G)$.}
\end{claim}

\vspace{-0.2cm}
\no{\it Proof of \cref{5app-theta}}.~If there are adjacent vertices, say $a\in A_2$ and $a'\in A_4$, then, by \ref{ABst} and \ref{AiB+2com}, $\{a,a',b_1,v_2,b_2\}$ induces a $\overline{P_2+P_3}$; so $A_2$ is anticomplete to $A_4$, and hence, by \ref{ATstset}, $A_2\cup A_4\cup T\cup \{v_1,v_3\}$ is a stable set.  Moreover, by \ref{Xstset},   $X_1\cup \{v_2,v_4\}$ is also a stable set. So, $G[C\cup A\cup X_1\cup T]$ is a bipartite graph, and hence $\theta(G[C\cup A\cup X_1\cup T])\leq \alpha(G)$. This proves  \cref{5app-theta}. $\Diamond$

 We may assume that $\overline{G}$ has no universal or a pair of comparable vertices. So, to prove the theorem, by \cref{5app-theta},  we show that  either $v_2$ is a nice vertex in $\overline{G}$ or $\theta(G[B\cup D\cup X_2])\leq 3$.    Note that $|\overline{N}(v_2)|=|\{v_4\}\cup A_4\cup T \cup X_1|+|B_3|+|B_4|\leq |\{v_4\}\cup A_4\cup T \cup X_1|+2$. We consider two cases based on the set $A_4$.

\medskip
 \no{\bf Case~1}~{\it Suppose that $A_4\neq \es$.}

  \smallskip
 \no Let $a_4\in A_4$. Then, by \ref{AiB+2com},  $|B_1|= 1$ and $|B_2|= 1$; and so $B_1=\{b_1\}$ and $B_2=\{b_2\}$.  If $|X_1|\leq 1$, then since $\{v_4\}\cup A_4\cup T$ is a stable set (by \ref{ATstset}) which is anticomplete to $\{v_2\}$, we see that $|\overline{N}(v_2)|\leq (\alpha(G)-1)+3= \alpha(G)+2$, and hence $v_2$ is a nice vertex in $\overline{G}$. So, we may assume that $|X_1|\geq 2$; and let $x_1,x_1'\in X_1$.  Next:
 \begin{claim}\label{5app-Dclq}
 \mbox{$D$ is a clique. }
 \end{claim}

 \vspace{-0.2cm}
\no{\it Proof of \cref{5app-Dclq}}.~Suppose there are nonadjacent vertices in $D$, say $d_1$ and $d_2$. Then since $\{d_1,v_2,d_2,v_4,$ $a_4\}$ does not induce a $\overline{P_2+P_3}$, we have either $a_4d_1\notin E(G)$ or $a_4d_2\notin E(G)$. We may assume that $a_4d_1\notin E(G)$. If there is a vertex $x\in X_1$ such that $a_4x\in E(G)$, then, by \ref{XDcom}, $\{a_4,v_4,v_1,x,d_1\}$ induces a $\overline{P_2+P_3}$; so $\{a_4\}$ is anticomplete to $X_1$. Then, by \ref{AiB+2com} and \ref{BXantcomp}, $\{a_4,b_1,x_1,v_3,x_1'\}$ induces a $P_2+P_3$. This proves \cref{5app-Dclq}. $\Diamond$

Now, since $|X_2|\leq 1$, by \ref{XDcom} and \cref{5app-Dclq},  $D\cup X_2$ is a clique. Also, since $|B_i|\leq 1$, for each $i$, by \ref{AiB+2com}, $B_3\cup \{b_1\}$ and $B_4\cup \{b_2\}$ are cliques. So, $\theta(G[B\cup D\cup X_2])\leq 3$.

\medskip
\no{\bf Case~2}~{\it Suppose that $A_4=\es$.}

 \smallskip
 \no If one of $X_1$ and $T$ is empty, then, by \ref{ATstset} and \ref{Xstset}, $|\overline{N}(v_2)|\leq \alpha(G)+2$, and hence    $v_2$ is a nice vertex in $\overline{G}$. So, we may assume that  both $X_1$ and $T$ are nonempty; and let $x\in X_1$ and $t\in T$. Then:
\begin{claim}\label{5app-BT}
\mbox{$\{t\}$ is complete to $B$.}
\end{claim}

\vspace{-0.2cm}
\no{\it Proof of \cref{5app-BT}}. Suppose that there is a vertex, say $b\in B$ such that $bt\notin E(G)$. We may assume that $b\in B_1$. Recall that, by \ref{BXantcomp}, $B$ is anticomplete to $X$. Now, for $x\in X_1$, since $\{b,v_1,v_2,t,x\}$ does not induce a co-banner, $\{t\}$ is anticomplete to $X_1$. Likewise, $\{t\}$ is anticomplete to $X_2$. Next, if $t$ has a neighbor in some $B_i$, say $b'$, then $\{b',t,v_{i+2},v_{i+3},x\}$ induces a $P_2+P_3$; so $\{t\}$ is anticomplete to $B$. Since $\{t\}$ is anticomplete to $C\cup A\cup B\cup X$, and since $\overline{G}$ has no universal vertices, $t$ must have a neighbor in $D$, say $d$. Then, by \ref{abtnbBD}, $N_G(t)$ is a clique. Thus, $\overline{N}_G(d)\subseteq V(G)\sm (N(t)\cup \{t\})=\overline{N}_G(t)$, and so $N_{\overline{G}}(d)\subseteq N_{\overline{G}}(t)$. Hence, $d$ and $t$ are comparable vertices in $\overline{G}$, a contradiction.  This proves \cref{5app-BT}. $\Diamond$

Next:
\begin{claim}\label{5app-BD}
\mbox{$B_1=\{b_1\}$, $B_2=\{b_2\}$, and $B$ is complete to $D$.}
\end{claim}

\vspace{-0.2cm}
\no{\it Proof of \cref{5app-BD}}. By \cref{5app-BT} and by  \ref{A1A2onenbd}, we have $B_1=\{b_1\}$. Likewise, $B_2=\{b_2\}$. Next, if there are nonadjacent vertices, say $b\in B_i$ and $d\in D$, then, by \ref{abtnbBD}, $dt\notin E(G)$, and then, by \cref{5app-BT}, $\{b,t,d,v_{i+2},v_{i+3}\}$ induces a $K_2+K_3$; so $B$ is complete to $D$.  This proves \cref{5app-BD}. $\Diamond$

Now since $|B_3|\leq 1$, $|B_4|\leq 1$ and $|X_2|\leq 1$, by \cref{5app-BD},  by \ref{abtnbBD} and by \ref{AiB+2com}, clearly $B_1\cup B_3\cup D$, $B_2\cup B_4$ and $X_2$ are cliques. So, $\theta(G[B\cup D\cup X_2])\leq 3$.  This completes the proof of the theorem.
\end{proof}

\subsection{($P_2+P_3$, $\overline{P_2+P_3}$, $K_{2}+K_{3}$, co-banner, $\overline{H_2}$)-free graphs that contain an $\overline{H_3}$}

Let $G$ be a ($P_2+P_3$, $\overline{P_2+P_3}$, $K_{2}+K_{3}$, co-banner, $\overline{H_2}$)-free graph such  that $\overline{G}$ has no universal or a pair of comparable vertices. We may consider an $G$ contains an $\overline{H_3}$ with vertices and edges as in Figure~\ref{fig:partfig2}. Let $C:=\{v_1,v_2,v_3,v_4\}$.  We partition $V(G)\sm C$ as in Section~\ref{genprop},  and we use the properties in Section~\ref{genprop}.
 Clearly, $b^*\in B$ and $t^*\in T$. Also, since $b^*t^*\in E(G)$,  $B$ is not anticomplete to $T$.  Moreover, the graph $G$ has some more properties which we give in \cref{H3-Prop1,H3-Prop2,AiAi+2oneofemp,H3-AiAi+2ancomforali,H2-AiAi+2K22case,H2-AiAi+2K13case,H3-AiAi+2notanticom} below.

 \begin{lemma}\label{H3-Prop1}The following hold:

\vspace{-0.3cm}
\begin{lemmalist}
\item\label{BiBi+1} For   $i\in \{1,2,3,4\}$, $B_i\cup B_{i+1}$ is a stable set.
\item\label{BTcom} $B$ is complete to $T$.
\item\label{BTcard} For   $i\in \{1,2,3,4\}$, $|B_i|\leq 1$, and $|T|=1$.
\item\label{BDcom} $B$ is complete to $D$. Moreover, $D$ is a clique.
\item\label{theta-BCDT} $\theta(G[B\cup D\cup T])\leq 2$ and $\theta(G[B\cup C\cup D\cup T])\leq 3$.
\end{lemmalist}
\end{lemma}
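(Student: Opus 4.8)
The plan is to establish the five items in the stated order, since each feeds the next, using throughout the standard method: work with the fixed $C_4$ $\{v_1,v_2,v_3,v_4\}$ and the induced partition of $V(G)$, invoke the properties of Section~\ref{genprop}, and whenever a forbidden configuration appears, exhibit an induced $P_2+P_3$, $\overline{P_2+P_3}$, $K_2+K_3$, co-banner or $\overline{H_2}$. The two features special to this subsection that I expect to be essential are the \emph{anchor edge} $b^*t^*$ (with $b^*\in B$ and $t^*\in T$), and the assumption that $\overline{G}$ has neither a universal vertex nor a pair of comparable vertices.

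For \cref{BiBi+1}: co-banner-freeness already gives, via \ref{ABst}, that each $B_i$ is stable, so I only need to forbid an edge between $B_i$ and $B_{i+1}$; but if $b\in B_i$ is adjacent to $b'\in B_{i+1}$ then, rotating the indices so $i=1$, the set $\{v_1,v_2,v_3,v_4,b,b'\}$ induces an $\overline{H_2}$ (here $b$ sees exactly $v_1,v_2$, $b'$ sees exactly $v_2,v_3$, and $bb'\in E(G)$) --- this is exactly the reason $\overline{H_2}$ was excluded. For \cref{BTcom} I would adapt the proof of \cref{5app-BT}: given $b\in B$ and $t\in T$ with $bt\notin E(G)$, note first that $t$ is anticomplete to $A$ by \ref{ATstset}; co-banner-freeness then forces $t$ anticomplete to $X$, and $(P_2+P_3)$-freeness --- now using a vertex of $C$ together with the anchor edge $b^*t^*$ in the role played by an $X_1$-vertex in \cref{5app-BT}, so that the argument survives the case $X_1=\es$ which could not occur there --- forces $t$ anticomplete to $B$ as well. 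Hence $N_G(t)\subseteq D$; if $N_G(t)=\es$ then $t$ is isolated in $G$ and $\overline{G}$ has a universal vertex, a contradiction, while otherwise any $d\in N_G(t)$ lies in the clique $N_G(t)$ (by \ref{abtnbBD}), so $\overline{N}_G(d)\subseteq \overline{N}_G(t)$ and $d,t$ are comparable in $\overline{G}$, again a contradiction. So $B$ is complete to $T$.

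Given \cref{BTcom}, \cref{BTcard} is immediate: $t^*$ is adjacent to all of $B_i$ while, by \ref{A1A2onenbd} (which uses $(K_2+K_3)$-freeness), $t^*$ has at most one neighbour in $A_{i+1}\cup B_i\supseteq B_i$, so $|B_i|\le 1$; symmetrically every $b\in B$ has at most one neighbour in $A_i\cup B_i\cup T\supseteq T$ by \ref{A1A2onenbd}, while $T\subseteq N_G(b)$ by \cref{BTcom}, so $|T|\le 1$, hence $|T|=1$ since $t^*\in T$. For \cref{BDcom} the substantive claim is ``$B$ complete to $D$''; assuming $b\in B_i$ and $d\in D$ are non-adjacent, I would run a short case analysis on the adjacency of $d$ to $b^*$ and to $t^*$ (and on where $b^*$ sits relative to $b$), using \cref{BTcom}, the anchor edge, and the fact from \ref{abtnbBD} that $N_G(b^*)\cap D$ and $N_G(t^*)\cap D$ are cliques, to produce one of the four small forbidden graphs; once $B$ is complete to $D$ we get $D=N_G(b^*)\cap D$, a clique by \ref{abtnbBD}. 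Finally \cref{theta-BCDT} is pure bookkeeping: by \cref{BTcard} each $B_i$ has at most one vertex, by \ref{AiB+2com} (co-banner-freeness) $B_1\cup B_3$ and $B_2\cup B_4$ are cliques, and by \cref{BDcom} and \cref{BTcom} $D$ is a clique complete to $B$ and $T=\{t^*\}$ is complete to $B$; so $B_1\cup B_3\cup D$ and $B_2\cup B_4\cup T$ are two cliques covering $B\cup D\cup T$, while $\{v_1,v_2\}\cup B_1\cup D$, $\{v_3,v_4\}\cup B_3$ and $B_2\cup B_4\cup T$ are three cliques covering $B\cup C\cup D\cup T$.

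The step I expect to be the main obstacle is \cref{BTcom}, together with the ``$B$ complete to $D$'' half of \cref{BDcom}: these are the only places where the properties of Section~\ref{genprop} do not already determine the local structure, so one is forced into a genuine multi-case argument pivoting on the anchor edge $b^*t^*$ and on the no-comparable-pair hypothesis --- in particular, handling the sub-cases where $b^*$ and $t^*$ sit ``symmetrically'' to the vertices under consideration and no five-vertex forbidden subgraph is immediately visible.
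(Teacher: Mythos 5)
Your overall route coincides with the paper's: (i) via an $\overline{H_2}$ on $C\cup\{b,b'\}$, (iii) from \ref{A1A2onenbd} combined with (ii), (iv) via the clique property of \ref{abtnbBD} applied to $N(t^*)$ together with \cref{BTcom}, and (v) by exhibiting explicit clique covers. Your covers in (v) differ slightly from the paper's (you pair $D$ with $B_1\cup B_3$ rather than with $B_2\cup B_4$), and your derivation of $|T|=1$ from \ref{A1A2onenbd} applied to $b^*$ replaces the paper's direct $P_2+P_3$ on $\{v_3,v_4,t,b^*,t'\}$; both variants are valid.

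The one step that does not go through as stated is the intermediate claim in (ii) that a vertex $t\in T$ nonadjacent to some $b\in B$ can be shown anticomplete to \emph{all} of $B$, so that $N_G(t)\subseteq D$. Taking $b\in B_1$, the forbidden-subgraph arguments exclude neighbours of $t$ in $A$, $X$ and $B_2\cup B_4$ (for $b'\in B_2\cup B_4\cap N(t)$ one gets the $P_2+P_3$ $\{b',t,b,v_1,v_4\}$, using $bb'\notin E(G)$ from (i)), and a neighbour in $B_3$ can be killed by a co-banner since $B_1$ is complete to $B_3$ by \ref{AiB+2com}; but a neighbour $b''$ of $t$ inside $B_1$ itself (with $b''\neq b$ and $bb''\notin E(G)$, as $B_1$ is stable) yields no five-vertex forbidden configuration, and the anchor edge does not obviously rescue this subcase. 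The repair is exactly the paper's move, and is already implicit in your own closing step: one only needs $N_G(t)$ to be a nonempty clique, and indeed $N_G(t)\subseteq B_1\cup B_3\cup D$ is a clique by \ref{abtnbBD} (applied with $i=1$ and $i=3$) together with \ref{AiB+2com}; the comparability contradiction in $\overline{G}$ then applies to an arbitrary $p\in N_G(t)$, not only to $p\in D$. With that adjustment your argument for (ii), and hence the whole lemma, is sound.
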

\begin{proof} $(i)$:~By \ref{ABst}, it is enough to show that $B_i$ is anticomplete to $B_{i+1}$. Now if  there are  adjacent vertices, say $b\in B_i$ and $b'\in B_{i+1}$, then $C\cup \{b,b'\}$ induces an $\overline{H_2}$. $\diamond$

\no{$(ii)$:}~Suppose  there are nonadjacent vertices, say $b\in B$ and $t\in T$. We may assume that $b\in B_1$. Now, if $t$ has a neighbor in $X$, say $x$, then, by \ref{BXantcomp}, $\{t,x,v_1,v_2,b\}$ induces a co-banner, and if  $t$ has a neighbor in $B_2\cup B_4$, say $b'\in B_2$, then  by \cref{BiBi+1}, $bb'\notin E(G)$, and then $\{b',t,b,v_1,v_4\}$ induces a $P_2+P_3$. These contradictions together with \ref{ATstset} show that $\{t\}$ is anticomplete to $C\cup A\cup B_2\cup B_4\cup X$.  Since $\overline{G}$ has no universal vertices,   $t$ must have a neighbor in $B_1\cup B_3\cup D$, say $p$. By \ref{abtnbBD} and \ref{AiB+2com}, $N_G(t)$ is a clique. Thus, $\overline{N}_G(p)\subseteq V(G)\sm (N(t)\cup \{t\})=\overline{N}_G(t)$, and so $N_{\overline{G}}(p)\subseteq N_{\overline{G}}(t)$. Hence $p$ and $t$ are comparable vertices in $\overline{G}$, a contradiction. $\diamond$

\no{$(iii)$:}~By \ref{A1A2onenbd} and \cref{BTcom}, clearly $|B_i|\leq 1$. Next, if there are vertices, say $t,t'\in T$, then, by \cref{BTcom} and \ref{ATstset}, $\{v_3,v_4,t,b^*,t'\}$ induces a $P_2+P_3$; so $|T|\leq 1$. Since $T\neq \es$, we have $|T|=1$.
 $\diamond$

\no{$(iv)$:}~The proof of the first assertion follows from our assumption that $B, T\neq \es$, and is similar to the proof of \cref{5app-BD} in \cref{thm:5apple}, and we omit the details. Since $B\neq \es$, the second assertion follows from the first assertion and from \ref{abtnbBD}. $\diamond$

\no{$(v)$:}~We use \ref{AiB+2com}, \cref{BTcard}, and \cref{BDcom}. Since $B_1\cup B_3\cup T$ and $B_2\cup B_4\cup D$ are cliques,  clearly $\theta(G[B\cup D\cup T])\leq 2$. Also, since   $B_1\cup B_3\cup T$, $B_2\cup D\cup \{v_2,v_3\}$ and $B_4\cup \{v_1,v_4\}$ are cliques, we have $\theta(G[B\cup C\cup D\cup T])\leq 3$.
\end{proof}

\begin{lemma}\label{H3-Prop2}The following hold:

\vspace{-0.3cm}
\begin{lemmalist}
\item\label{XTcom} $X$ is complete to $T$.
\item\label{AiXjmatchcom} Let $j,k\in \{1,2\}$ and $j\neq k$. Then for any $a\in A_j\cup A_{j+2}$, we have $|X_k\sm N(a)|\leq 1$, and   	for any $x\in X_k$, we have $|A_j\sm N(x)|\leq 1$ and $|A_{j+2}\sm N(x)|\leq 1$.
\end{lemmalist}
\end{lemma}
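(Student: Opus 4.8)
The plan is to establish the two assertions in order, since \ref{XTcom} carries the real content and \ref{AiXjmatchcom} then falls out of it with almost no extra work.

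For \ref{XTcom} I would argue by contradiction. Suppose some $x\in X$ and $t\in T$ satisfy $xt\notin E(G)$. Recall that $B\neq\es$ (we have $b^*\in B$); by \cref{BTcom} the vertex $b^*$ is adjacent to $t$, while by \ref{BXantcomp} it is nonadjacent to $x$, so $\{b^*,t\}$ is an edge that misses $x$. Since $b^*$ lies in some $B_p$, its two nonneighbours in $C$ form a consecutive pair, say $v_q,v_r$ with $v_qv_r\in E(G)$. The two neighbours of $x$ in $C$ are a diagonal pair of the $4$-cycle, and a diagonal pair and a consecutive pair of $C$ meet in exactly one vertex, so exactly one of $v_q,v_r$, say $v_q$, is adjacent to $x$. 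Then $\{x,v_q,v_r\}$ induces a $P_3$ (with middle vertex $v_q$), and it is anticomplete to the edge $\{b^*,t\}$: the vertex $b^*$ misses $v_q,v_r$ (nonneighbours in $C$) and misses $x$ by \ref{BXantcomp}, while $t$ misses all of $C$ and misses $x$ by assumption. Hence $\{b^*,t,x,v_q,v_r\}$ induces a $P_2+P_3$, a contradiction, which proves \ref{XTcom}.

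For \ref{AiXjmatchcom} it suffices to treat $j=1$, $k=2$; the case $j=2$, $k=1$ follows by relabelling $C$. The key point, given \ref{XTcom}, is that every vertex of $X_2$ is adjacent to every vertex of $T$, while $X_2$ is stable by \ref{Xstset}, so any two vertices of $X_2$ joined through a $T$-vertex span an induced $P_3$. Concretely, suppose $a\in A_1\cup A_3$ has two nonneighbours $x,x'\in X_2$; let $v_i$ be the unique neighbour of $a$ in $C$ (so $i\in\{1,3\}$) and fix $t\in T$. Then $\{x,t,x'\}$ induces a $P_3$ (by \ref{XTcom} and \ref{Xstset}) that is anticomplete to the edge $\{a,v_i\}$: the vertex $a$ misses $x,x',t$ (the first two by hypothesis, $t$ by \ref{ATstset}), and $v_i$ misses $t$ (as $t\in T$) and misses $x,x'$ (whose only $C$-neighbours are $v_2,v_4$). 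So $\{a,v_i,x,t,x'\}$ induces a $P_2+P_3$, a contradiction, giving $|X_2\sm N(a)|\le 1$. Symmetrically, if some $x\in X_2$ has two nonneighbours $a,a'$ in $A_i$ with $i\in\{1,3\}$, then $\{a,v_i,a'\}$ induces a $P_3$ (the outer edges exist because $a,a'\in A_i$, and $aa'\notin E(G)$ by \ref{ATstset}), and it is anticomplete to the edge $\{t,x\}$ furnished by \ref{XTcom} (here $t$ misses $a,a',v_i$, and $x$ misses $a,a'$ by hypothesis and $v_i\notin N(x)$); thus $\{t,x,a,v_i,a'\}$ induces a $P_2+P_3$, a contradiction, and $|A_i\sm N(x)|\le 1$ for $i\in\{1,3\}$.

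The only step that needs a spark of insight is \ref{XTcom}: one has to realise that the lone edge $\{b^*,t\}$, together with a $P_3$ chosen to run through $x$ and the two consecutive vertices of $C$ that $b^*$ happens to avoid, is already a forbidden $P_2+P_3$; the choice of that $P_3$ works uniformly in the position of $b^*$ because a diagonal pair and a consecutive pair of $C_4$ always share exactly one vertex. Once \ref{XTcom} is available, \ref{AiXjmatchcom} is pure bookkeeping: each hypothetical pair of nonneighbours manufactures an induced $P_3$ (either $\{x,t,x'\}$ or $\{a,v_i,a'\}$) anticomplete to a convenient edge ($\{a,v_i\}$ or $\{t,x\}$), hence a $P_2+P_3$.
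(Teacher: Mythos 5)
Your proof is correct and follows essentially the same route as the paper: for (i) the paper also takes $b\in B_i$, uses \cref{BTcom} and \ref{BXantcomp}, and exhibits the $P_2+P_3$ on $\{b,t,x,v_{i+2},v_{i+3}\}$ (your $v_q,v_r$ are exactly the two consecutive non-neighbours of $b$ in $C$); for (ii) the paper produces the same two configurations $\{a,v_1,x_2,t^*,x_2'\}$ and $\{t^*,x,a_1,v_1,a_1'\}$.
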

 \begin{proof}
$(i)$:~If there are nonadjacent vertices, say  $x\in X$ and $t\in T$, then, for any $b\in B$, say $b\in B_i$, by \cref{BTcom} and \ref{BXantcomp},  $\{b,t,x,v_{i+2},v_{i+3}\}$ induces a $P_2+P_3$. $\diamond$

\no{$(ii)$:}~We prove for $j=1$. By symmetry, we may assume that $a\in A_1$. If there are vertices, say $x_2,x_2'\in X_2\sm N(a)$, then, by \ref{ATstset} and \cref{XTcom}, $\{a,v_1,x_2,t^*,x_2'\}$ induces a $P_2+P_3$. Next, if  there are  vertices, say $a_1,a_1'\in A_1\sm N(x)$, then,   by \ref{ATstset} and \cref{XTcom}, $\{t^*,x,a_1,v_1,a_1'\}$ induces a $P_2+P_3$.

\end{proof}

\begin{lemma}\label{AiAi+2oneofemp}
	If  one of $A_i$, $A_{i+2}$ is empty,  for each $i\in \{1,2,3,4\}$,  then  $\overline{G}$ is a good graph.
\end{lemma}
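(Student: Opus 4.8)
The plan is to exploit the dihedral symmetry of the labelled $C_4$ to normalise which of the sets $A_i$ are empty, and then to combine the clique-cover estimate $\theta(G[B\cup C\cup D\cup T])\le 3$ of \cref{theta-BCDT} with either a bound $\theta(G[A\cup X])\le\alpha(G)$ (which yields $\theta(G)\le\alpha(G)+3$, i.e.\ condition~$(e)$ applied to $\overline G$), or with the observation that $v_3$ or $v_4$ is a nice vertex of $\overline G$, or that $\overline G$ is a nice graph.

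\emph{Step 1 (normalisation).} The hypothesis says precisely that one of $A_1,A_3$ is empty and one of $A_2,A_4$ is empty, so the two empty sets are consecutive in the cyclic order $A_1,A_2,A_3,A_4$. After a rotation of $C$ (and, where convenient later, the reflection $v_1\leftrightarrow v_2$, $v_3\leftrightarrow v_4$, which preserves $A_3=A_4=\emptyset$), I may assume $A_3=A_4=\emptyset$, so $V(G)=A_1\cup A_2\cup B\cup C\cup D\cup X\cup T$. By \cref{theta-BCDT} it then suffices to show $\theta(G[A_1\cup A_2\cup X_1\cup X_2])\le\alpha(G)$, or to exhibit a nice vertex of $\overline G$, or to show that $\overline G$ is a nice graph.

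\emph{Step 2 (the case $A=\emptyset$).} If $A_1=A_2=\emptyset$, then $\overline N_G(v_4)=B_1\cup B_2\cup X_1\cup T\cup\{v_2\}$; since $X_1\cup\{v_2,v_4\}$ is stable (by \ref{Xstset}) while $|B_1|,|B_2|\le 1$ and $|T|=1$ by \cref{BTcard}, we get $d_{\overline G}(v_4)=|\overline N_G(v_4)|\le|X_1|+4\le(\alpha(G)-2)+4=\omega(\overline G)+2$, so $v_4$ is a nice vertex of $\overline G$.

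\emph{Step 3 (the case $A\ne\emptyset$).} I first record the structure of $G[A\cup X]$: $A_1$ and $A_2$ are stable (\ref{ATstset}); the edges between $A_1$ and $A_2$ form a partial matching (\ref{A1A2onenbd}); the non-edges between $A_1$ and $X_2$ and between $A_2$ and $X_1$ form partial matchings (\cref{AiXjmatchcom}), so each of $G[A_1\cup X_2]$ and $G[A_2\cup X_1]$ is a complete bipartite graph minus a partial matching and, by K\"onig's theorem, is covered by $\max$ of its two part-sizes cliques, except for the degenerate case of one non-adjacent pair, which costs one extra clique; each vertex of $A_j$ has at most one neighbour in $X_j$ (\ref{AXonenbd}); and the unique vertex of $T$ (\cref{BTcard}) is complete to $X$ (\cref{XTcom}) but anticomplete to $A$ (\ref{ATstset}). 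I would then split according to whether each of these matchings is ``full'' and according to the order of $|A_1|,|A_2|,|X_1|,|X_2|$, aiming in each subcase at one of: a clique cover of $A_1\cup A_2\cup X_1\cup X_2$ of size at most $\alpha(G)$, with the required lower bound on $\alpha(G)$ witnessed by a stable set such as $A_1\cup A_2\cup T\cup\{v_4\}$ (valid when $A_1$ is anticomplete to $A_2$), $A_2\cup X_2\cup\{v_1,v_3\}$ (valid when $A_2$ is anticomplete to $X_2$), $X_1\cup\{v_2,v_4\}$, $X_2\cup\{v_1,v_3\}$, or $A_1\cup T\cup\{v_2,v_4\}$; or a nice vertex of $\overline G$ among $\{v_3,v_4\}$; or a triple of disjoint cliques $\{v_1,v_2\}$, $\{v_3,v_4\}$, $K$ with $K\subseteq B\cup D\cup T$ whose deletion lowers $\alpha$ by $2$, showing that $\overline G$ is a nice graph.

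\emph{Main obstacle.} The crux is the case analysis of Step~3. The generic K\"onig estimate is clean, but the degenerate ``complete bipartite minus perfect matching'' pieces, and above all a non-empty $X_2$ that cannot be absorbed into any clique meeting the sparse side $A_1\cup A_2\cup X_1$, will each need a small ad hoc argument---typically reabsorbing such a troublesome pair into one of the stable sets above, or passing to a nice vertex of $\overline G$. By contrast, the symmetry reduction, the application of \cref{theta-BCDT}, and the verification that the listed sets are stable are all routine given the facts already established.
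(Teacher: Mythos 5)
Your Steps 1 and 2 are correct and coincide with the paper's opening moves: the hypothesis forces two consecutive sets $A_i$ to be empty, one normalises to $A_3=A_4=\emptyset$, and whenever $A_1\cup X_1$ (or $A_2\cup X_2$) is stable, or one of $A_1,A_2,X_1,X_2$ has at most one element, a vertex of $C$ is already nice in $\overline G$ by exactly the kind of count you give. The genuine gap is Step 3, which you yourself call the crux but do not carry out, and the method you propose for it does not close as stated. K\"onig's theorem applied to the two complete-bipartite-minus-matching pieces $G[A_1\cup X_2]$ and $G[A_2\cup X_1]$ gives roughly $\theta(G[A\cup X])\le\max(|A_1|,|X_2|)+\max(|A_2|,|X_1|)$, whereas the stable sets you list only certify $\alpha(G)\ge\max(|A_1|,|A_2|)+3$ and $\alpha(G)\ge\max(|X_1|,|X_2|)+2$; when all four sets are large these bounds are incompatible (e.g.\ all four of size $10$ gives a $\theta$-estimate of $20$ against an $\alpha$-estimate of $13$). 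The missing ingredient is the paper's Claims \ref{H2A3A4es-A1A2card3} and \ref{H2A3A4es-X1X2card3}: in the non-degenerate situation ($|A_1|,|A_2|,|X_1|,|X_2|\ge 2$ and an edge $a_1x_1$ with $a_1\in A_1$, $x_1\in X_1$), one can prove $|A_1|,|A_2|,|X_1|,|X_2|\le 3$. That is not a routine consequence of the facts you record; it is a dedicated forbidden-subgraph hunt (a neighbour $x_2\in X_2$ of $a_1$ forces $x_1x_2\notin E(G)$, hence two further vertices of $A_1$ nonadjacent to $x_1$, hence a $P_2+P_3$ on $\{v_3,x_1,a_1',x_2,a_1''\}$), and without these absolute size bounds your case analysis has no way to terminate.

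Even granting those bounds, the resulting global estimate is only $\theta(G)\le 3+3+3=9$, which settles the lemma only when $\alpha(G)\ge 6$. The extremal case $\alpha(G)=5$ (which forces $|A_1|=|A_2|=2$ and, in the worst subcase, $|X_1|=|X_2|=3$) still requires a separate saving of one clique: the paper uses the stable set $X_1\cup B_1\cup B_2\cup\{v_4\}$ to empty out part of $B$ and then exhibits an explicit cover by eight cliques, including $D\cup\{v_3,x_1\}$, $B_1\cup B_3\cup T$ and $B_4\cup\{v_1,v_4\}$. So the ``small ad hoc arguments'' you defer are in fact the bulk of the proof, and as written the proposal does not yet constitute a proof of the lemma.
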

\begin{proof}
We may assume that $A_3\cup A_4=\es$. Also:

\vspace{-0.2cm}
 \begin{claim1}\label{H2A3A4es-AX}
 We may assume that $A_1\cup X_1$ and $A_2\cup X_2$ are not stable sets.
\end{claim1}

\vspace{-0.25cm}
\no{\it Proof of \ref{H2A3A4es-AX}}.~If  $A_1\cup X_1$ is a stable set, then, by \ref{ABst}, \ref{BXantcomp} and \cref{BTcard}, $|\overline{N}(v_2)|=|A_1\cup B_4\cup X_1|+|B_3|+|T|+|\{v_4\}|\leq (\alpha(G)-1)+3=\alpha(G)+2$; so $v_2$ is a nice vertex in $\overline{G}$, and we are done. So we may assume that $A_1\cup X_1$ is not a stable set. Likewise,  $A_2\cup X_2$ is not a stable set.  $\Diamond$

\vspace{-0.2cm}
 \begin{claim1}\label{H2A3A4es-A1A2card2}
 We may assume that  $|A_1|\geq 2$ and   $|A_2|\geq 2$.
\end{claim1}

\vspace{-0.25cm}
 \no{\it Proof of \ref{H2A3A4es-A1A2card2}}.~If $|A_1|\leq 1$, then since $B_3\cup B_4\cup X_1$ is a stable set (by \ref{BXantcomp} and \cref{BiBi+1}),  by \cref{BTcard}, $|\overline{N}(v_2)|=|B_3\cup B_4\cup X_1|+|A_1|+|T|+|\{v_{4}\}|\leq (\alpha(G)-1)+3=\alpha(G)+2$; so $v_2$ is a nice vertex in $\overline{G}$, and we are done.  Hence we may assume that $|A_1|\geq 2$. Likewise, $|A_2|\geq 2$.
 $\Diamond$

\vspace{-0.2cm}
 \begin{claim1} \label{H2A3A4es-X1X2card2}
 We may assume that $|X_1|\geq 2$ and  $|X_2|\geq 2$.
\end{claim1}

\vspace{-0.25cm}
 \no{\it Proof of \ref{H2A3A4es-X1X2card2}}.~If $|X_1|\leq 1$, then since $A_1\cup T\cup \{v_4\}$ is a stable set (by \ref{ATstset}),  by \cref{BTcard}, $|\overline{N}(v_2)|=|A_1\cup T \cup \{v_4\}|+ |B_3\cup B_4|+|X_1|\leq (\alpha(G)-1)+3=\alpha(G)+2$; so again $v_2$ is a nice vertex in $\overline{G}$, and we are done.  Hence we may assume that $|X_1|\geq 2$. Likewise, $|X_2|\geq 2$.
 $\Diamond$

  By \ref{H2A3A4es-AX}, there are adjacent vertices, say $a_1\in A_1$ and $x_1\in X_1$. Moreover, we claim the following:

\vspace{-0.2cm}
 \begin{claim1}\label{H2A3A4es-A1A2card3}
  $|A_1|\leq 3$, and $|A_2|\leq 3$.
\end{claim1}

\vspace{-0.25cm}
 \no{\it Proof of \ref{H2A3A4es-A1A2card3}}.~Suppose that $|A_1|\geq 4$. Then since $|X_2|\geq 2$, by \cref{AiXjmatchcom}, there is a vertex in $X_2$, say $x_2$, such that $a_1x_2\in E(G)$. Again, by \cref{AiXjmatchcom} and by the pigeonhole principle, there are vertices, say $a_1',a_1''\in A_1\sm \{a_1\}$ such that $a_1'x_2,a_1''x_2\in E(G)$. Now, since $\{x_1,x_2,v_1,v_2,a_1\}$ does not induce a $\overline{P_2+P_3}$, we have $x_1x_2\notin E(G)$.  Then, since $\{a_1',x_1,a_1,x_2,v_1\}$ does not induce a $\overline{P_2+P_3}$, $a_1'x_1\notin E(G)$. Likewise $a_1''x_1\notin E(G)$. But now $\{v_3,x_1,a_1',x_2,a_1''\}$ induces a $P_2+P_3$. So, we have $|A_1|\leq 3$. Likewise, $|A_2|\leq 3$.
$\Diamond$

\vspace{-0.2cm}
 \begin{claim1}\label{H2A3A4es-X1X2card3}
 $|X_1|\leq 3$, and $|X_2|\leq 3$.
\end{claim1}

\vspace{-0.25cm}
\no{\it Proof of \ref{H2A3A4es-X1X2card3}}.~The proof is similar to the proof of \ref{H2A3A4es-A1A2card3}, and we omit the details. 
$\Diamond$

\vspace{-0.2cm}
 \begin{claim1} \label{H2A3A4es-theta}
  $\theta(G[A_1\cup X_2])\leq 3$ and $\theta(G[A_2\cup X_1])\leq 3$.
\end{claim1}

\vspace{-0.25cm}
\no{\it Proof of \ref{H2A3A4es-theta}}.~This follows from \ref{ATstset}, \ref{Xstset}, \cref{AiXjmatchcom}, \ref{H2A3A4es-A1A2card3} and by \ref{H2A3A4es-X1X2card3}.
$\Diamond$

 \medskip
Now, by \ref{H2A3A4es-theta} and \cref{theta-BCDT}, we conclude that $\theta(G)\leq 9$. Also, by \ref{H2A3A4es-A1A2card2} and by \ref{ATstset}, since $A_1\cup T\cup \{v_2,v_4\}$ is a stable set, we have $\alpha(G)\geq 5$. If $\alpha(G)\geq 6$, then $\theta(G)\leq \alpha(G)+3$, and we are done. So, we may assume that $\alpha(G)=5$. Since  $A_1\cup T\cup \{v_2,v_4\}$ is a stable set (by \ref{ATstset}), by \ref{H2A3A4es-A1A2card2}, $|A_1|=2$. Likewise, $|A_2|=2$.
Now, if $|X_1|=2$, then, by \cref{AiXjmatchcom}, $\theta(G[A_2\cup X_1])\leq 2$, and so by \ref{H2A3A4es-theta}  and \cref{theta-BCDT}, $\theta(G)\leq 8=\alpha(G)+3$. So, we may assume that $|X_1|=3$ and $|X_2|=3$. Since $X_1\cup B_1\cup B_2\cup \{v_4\}$ is a stable set, one of $B_2, B_4$ is empty. We may assume that $B_2=\es$. Then as in \ref{H2A3A4es-theta}, we have  $\theta(G[A_2\cup (X_1\sm \{x_1\})])\leq 2$, and $\theta(G[A_1\cup X_2\cup \{v_2\}])\leq 3$. Also,  $D\cup \{v_3,x_1\}$ is a clique (by \cref{BDcom} and \ref{XDcom}),  $B_4\cup\{v_1,v_4\}$ is a clique (by  \cref{BTcard}), and $B_1\cup B_3\cup T$ is a clique (by \ref{AiB+2com}, \cref{BTcom} and \cref{BTcard}). So, we conclude that $\theta(G)\leq 8 =\alpha(G)+3$. This completes the proof.
 \end{proof}

 \begin{lemma}\label{H3-AiAi+2ancomforali}
If  $A_i$ is anticomplete to $A_{i+2}$, for each $i\in \{1,2,3,4\}$, then   $\overline{G}$ is a good graph.
\end{lemma}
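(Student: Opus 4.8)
The plan is to invoke \cref{AiAi+2oneofemp}, cut down via one short structural observation, and then run a bounded case analysis. The hypothesis of \cref{AiAi+2oneofemp} is precisely that one of $A_i,A_{i+2}$ is empty for each $i$, so if it holds we are done; hence, using the rotational symmetry of $C$, we may assume $A_1\ne\es$ and $A_3\ne\es$. The key point is then that $|A_1|=|A_3|=1$: by \ref{ATstset} each $A_i$ is stable, and by the lemma's hypothesis $A_1$ is anticomplete to $A_3$, so if $a_1,a_1'$ were distinct vertices of $A_1$ and $a_3\in A_3$, the set $\{a_1,v_1,a_1',a_3,v_3\}$ would induce a $P_2+P_3$ (the path $a_1\text{-}v_1\text{-}a_1'$ together with the edge $a_3v_3$), a contradiction. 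The same argument shows $|A_2|=|A_4|=1$ whenever $A_2$ and $A_4$ are both nonempty. So, writing $A_1=\{a_1\}$ and $A_3=\{a_3\}$, exactly three cases remain: (i)~$A_2=A_4=\es$; (ii)~by symmetry $A_4=\es\ne A_2$, with $|A_2|$ possibly large; (iii)~$A_2\ne\es\ne A_4$, so $|A_2|=|A_4|=1$.

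Since $\overline{G}$ has neither a universal vertex nor a pair of comparable vertices, in each case it suffices to show that either some $v_i$ is a nice vertex of $\overline{G}$ --- for which it is enough that $|\overline{N}_G(v_i)|\le\alpha(G)+2$ --- or that $\theta(G)\le\alpha(G)+3$, which is exactly $\chi(\overline G)\le\omega(\overline G)+3$. The ingredients are: the stable sets $A_1\cup A_3\cup T\cup\{v_2,v_4\}$, $A_2\cup A_4\cup T\cup\{v_1,v_3\}$, $X_1\cup\{v_2,v_4\}$ and $X_2\cup\{v_1,v_3\}$, which give $\alpha(G)\ge\max\{5,\ |A_2|+3,\ |X_1|+2,\ |X_2|+2\}$, together with the refinement that $a_1$ and $a_3$ each have at most one neighbour in $X_1$ (\ref{AXonenbd}), so $X_1\cup\{v_2,v_4\}$ can be enlarged by $a_1,a_3$ after deleting their at most two neighbours in $X_1$; the bound $\theta(G[B\cup C\cup D\cup T])\le 3$ of \cref{theta-BCDT}, where $|B_i|\le 1$, $|T|=1$, and $D$ is a clique; and \cref{AiXjmatchcom}, by which for $j\ne k$ the bipartite graph between $A_j$ and $X_k$ is the complement of a matching, so $G[A_j\cup X_k]$ has a clique cover of size at most $\max\{|A_j|,|X_k|\}+1$, into which $D$ may be folded for free since $D$ is complete to $X$ (\ref{XDcom}). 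In most of the sub-cases a direct estimate of $|\overline{N}_G(v_2)|$ or $|\overline{N}_G(v_4)|$ against these lower bounds for $\alpha(G)$ already exhibits $v_2$ or $v_4$ as a nice vertex of $\overline{G}$; otherwise one covers $B\cup C\cup T$ by three cliques and then handles $A\cup X\cup D$ by merging $\{a_1,a_3\}$ (and, in case~(ii), $A_2$) with $X_2$, merging $A_4$ with $X_1$, and amalgamating $X_1$, $X_2$, $D$ into $\theta(G[X_1\cup X_2])=|X_1|+|X_2|-\nu$ cliques, where $\nu$ is the matching number of $G[X_1\cup X_2]$ (by K\"onig's theorem, this block costs at most $\alpha(G[X_1\cup X_2])\le\alpha(G)$).

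I expect the main obstacle to be the handful of sub-cases in which $\alpha(G)$ is small (typically $\alpha(G)\le 6$) while $|X_1|$, $|X_2|$, and in case~(ii) $|A_2|$, are comparatively large: there the crude ``cover each block separately'' count overshoots $\alpha(G)+3$ by a constant, because $\alpha(G)$ only sees $\max\{|X_1|,|X_2|\}$ whereas the naive cover sees the sum $|X_1|+|X_2|$. Closing this gap requires amalgamating $X_1$, $X_2$ and $D$ into a \emph{single} clique-cover block keyed to a large matching of $G[X_1\cup X_2]$ --- equivalently, producing an optimal colouring of the relevant induced subgraph of $\overline{G}$ --- while simultaneously using that the $a_i$'s have almost no neighbours in the $X$'s to keep a witnessing stable set large (or, as in the proof of \cref{AiAi+2oneofemp}, forcing $|X_1|,|X_2|\le 3$ from the $\overline{H_2}$-free and $\overline{H_3}$-containing hypotheses). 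Verifying that one of ``$v_i$ nice in $\overline G$'' or ``$\theta(G)\le\alpha(G)+3$'' always applies, across the small side-cases recording which of $B_1,\dots,B_4$ are nonempty and how $a_1,a_3$ attach to $D$ and the $X$'s, is where the bulk of the routine but lengthy bookkeeping lies.
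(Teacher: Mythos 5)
Your top-level strategy coincides with the paper's: reduce via \cref{AiAi+2oneofemp} to $A_1,A_3\neq\es$, deduce $|A_1|=|A_3|=1$ (your $P_2+P_3$ argument is a correct substitute for the paper's appeal to \ref{a1b1nonnbd}), and then show that either some $v_i$ is nice in $\overline{G}$ or $\theta(G)\leq\alpha(G)+3$, using \cref{theta-BCDT} for $B\cup C\cup D\cup T$ and \cref{AiXjmatchcom} to cover $A\cup X$. However, the one construction you actually commit to in the hard case is the wrong one. By \cref{AiXjmatchcom} a vertex of $A_2$ is adjacent to all but at most one vertex of $X_1$, whereas by \ref{AXonenbd} it has at most \emph{one} neighbour in $X_2$; so ``merging $A_2$ with $X_2$'' yields essentially no cliques, and the correct pairing (which the paper uses in \ref{H3AiAi+2ancomX1A2clqcov}) is $A_2\sm A_2'$ with $X_1$ and $A_2'\cup\{a_1,a_3\}$ with $X_2$. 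Your fallback of amalgamating $X_1\cup X_2\cup D$ into a single K\"onig block of $\alpha(G[X_1\cup X_2])$ cliques cannot work either: that block alone may cost $\alpha(G)$, which together with the three cliques of \cref{theta-BCDT} exhausts the budget before $A$ is covered. Moreover, even with the correct pairing the arithmetic needs $|X_1|\leq\alpha(G)-3$, one better than the $|X_1|+2\leq\alpha(G)$ in your list of lower bounds; the paper obtains this by first showing (\ref{H2AiAi+2-B}) that $B_3,B_4\neq\es$ may be assumed, since otherwise $v_2$ is already nice in $\overline{G}$, and then using that $X_1\cup B_3\cup B_4\cup\{v_2\}$ is stable. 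Without that step the count overshoots by one.

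The second gap is that your case split (by which of $A_2,A_4$ are empty) misses the dichotomy the paper actually pivots on, namely whether $a_1$ is complete to $A_2\cup A_4$; via the parity observation \ref{Ai123} this determines whether $\{a_1,a_2,a_3\}$ is a clique-coverable pair of edges or a stable set, and the two outcomes are treated by entirely different covers (a perfect-graph argument on $X_1\cup X_2\cup C\cup D$ in the first, the matching covers above in the second). Finally, the residual sub-cases you flag as ``routine but lengthy bookkeeping'' --- the bounds $|X_1|,|X_2|\leq 3$ from \ref{theta-XA}, the refinement from $\theta(G)\leq 9$ down to $\theta(G)\leq 8$ when $\alpha(G)=5$ in \ref{H3AiAi+2-theta}, and the three-part partition of $X_2$ in \ref{H3AiAi+2ancomX2A1A3clq} --- constitute most of the paper's two-page proof and are not routine consequences of the ingredients you list. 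As it stands the proposal is a plausible outline whose only committed step fails, so the essential content of the lemma remains unproved.
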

\begin{proof}
 First we observe the following:

 \vspace{-0.2cm}
 \begin{claim1}\label{Ai123}  If there are vertices, say $p\in A_{i}$,  $q\in A_{i+1}$, and $r\in A_{i+2}$, then
 $pq, qr\in E(G)$ or $pq, qr\notin E(G)$.
 \end{claim1}

 \vspace{-0.25cm}
 \no{\it Proof of \ref{Ai123}}.~If $pq\in E(G)$, and $qr\notin E(G)$ (say), then $\{r,v_{i+2},v_i,p,q\}$ induces a $P_2+P_3$.
 $\Diamond$

By \cref{AiAi+2oneofemp}, we may assume that $A_1,A_3\neq \es$. Then, by \ref{a1b1nonnbd}, we let $A_1:=\{a_1\}$ and  $A_3:=\{a_3\}$.
First suppose that $\{a_1\}$ is complete to $A_2\cup A_4$.  Then, by \ref{A1A2onenbd}, we have $|A_2|\leq 1$ and $|A_4|\leq 1$.
Also, by \ref{Ai123},  $\{a_3\}$ is complete to $A_4$. So, by \ref{AiB+2com},  \cref{BTcom} and \cref{BTcard},  $A_2\cup B_3\cup \{a_1\}$,  $A_4\cup B_1\cup \{a_3\}$, and $B_2\cup B_4\cup T$ are cliques. Also, by \ref{Xstset}, \ref{XDcom} and \cref{BDcom}, we conclude that $G[X_1\cup X_2\cup C\cup D]$ is a perfect graph, as it is a join of a bipartite graph and a complete graph. Thus, $\theta(G)\leq \alpha(G)+3$, and we are done. So we may assume that $\{a_1\}$ is not complete to $A_2\cup A_4$, and let $a_2\in A_2$ be such that $a_1a_2\notin E(G)$. So, by \ref{Ai123}, $\{a_1,a_2,a_3\}$ is  a stable set. We consider two cases based on the set $A_4$.

\medskip
\no{\bf Case~1} $A_4\neq \es$.
 	
 \smallskip
 By \ref{a1b1nonnbd}, $A_2\sm \{a_2\}=\es$ and $|A_4|=1$. Let $A_4:=\{a_4\}$. So, by \ref{Ai123}, $A = \{a_1,a_2,a_3,a_4\}$ is  a stable set. Next, we claim that:

\vspace{-0.2cm}
 \begin{claim1}\label{theta-XA}
$|X_1|\leq 3$ and $|X_2|\leq 3$. Hence, $\theta(G[X_1\cup \{a_1,a_3\}])\leq 3$ and $\theta(G[X_2\cup \{a_2,a_4\}])\leq 3$.
\end{claim1}

\vspace{-0.25cm}
 \no{\it Proof of \ref{theta-XA}}.~Suppose $|X_1|\geq 4$.  Then, by \cref{AiXjmatchcom}, there are vertices $p,q,r\in X_1$ such that $\{a_2\}$ is complete to $\{p,q,r\}$. Then, by \ref{AXonenbd}, we may assume that $pa_3,qa_3\notin E(G)$. But, now \ref{Xstset},  \ref{BXantcomp}, \ref{ABst} and by \ref{AiB+2com},  $\{b^*,a_3,a_2,p,q\}$ induces $P_2+P_3$. So, $|X_1|\leq 3$. Likewise, $|X_2|\leq 3$. This proves the first assertion. The second assertion follows from the first and from \cref{AiXjmatchcom}.
 $\Diamond$

 By \cref{theta-BCDT} and \ref{theta-XA},  we have $\theta(G)\leq 9$. If $\alpha(G)\geq 6$, then $\theta(G)\leq \alpha(G)+3$. So we may assume that $\alpha(G)\leq 5$. Since $T\cup \{a_1,a_3,v_2,v_4\}$ is a stable set of size $5$, we have $\alpha(G)= 5$.

 \vspace{-0.2cm}
 \begin{claim1}\label{H3AiAi+2-theta}
		$\theta(G[X_1\cup \{a_1,a_3,v_1,v_3\}])\leq 3$. Likewise,  $\theta(G[X_2\cup \{a_2,a_4,v_2,v_4\}])\leq 3$.
	\end{claim1}

\vspace{-0.25cm}
 \no{\it Proof of \ref{H3AiAi+2-theta}}.~Since $X_1\cup \{v_2,v_4\}$ is a stable set (by \ref{Xstset}), and since $\alpha(G)=5$, $|X_1|\leq 3$.
If $|X_1|\leq 1$, then $\{a_1,v_1\}, \{a_3,v_3\}, X_1$ are  cliques, and $\theta(G[X_1\cup \{a_1,a_3,v_1,v_3\}])\leq 3$; so we may assume that $|X_1|\geq 2$. Let $x_1,x_1'\in X_1$. Since $\{a_1,a_3,v_1,v_3,x_1,x_1'\}$ is a stable set of size $6$, we may assume that $a_1x_1\in E(G)$. Then since $\{a_1,v_1,x_1,v_3,a_3\}$ does not induce a co-banner, $a_3x_1\in E(G)$. So, by \ref{AXonenbd}, $\{a_1,a_3\}$ is anticomplete to $X_1\sm \{x_1\}$. Since $(X_1\sm \{x_1\})\cup \{a_1,a_3,v_2,v_4\}$ is a stable set of size at most $5$, $|X_1\sm \{x_1\}|=1$, and so $X_1\sm x_1=\{x_1'\}$. Then since  $\{a_1,v_1,x_1\}$, $\{a_3,v_3\}$, and $\{x_1'\}$ are cliques, $\theta(G[X_1\cup \{a_1,a_3,v_1,v_3\}])\leq 3$. This proves \ref{H3AiAi+2-theta}. $\Diamond$

So, by \ref{H3AiAi+2-theta} and \cref{theta-BCDT}, we have    $\theta(G)\leq 8=\alpha(G)+3$.


\medskip
\no{\bf Case~2} $A_4=\es$.
 	
 \smallskip
Let $A_2':=\{a\in A_2\mid aa_1\in E(G)\}$; so $a_2\in A_2\sm A_2'$.  By \ref{A1A2onenbd}, $|A_2'|\leq 1$. Next, we claim the following:

\vspace{-0.2cm}
 \begin{claim1}\label{H2AiAi+2-X}
We may assume that $|X_1|\geq 2$.
\end{claim1}

\vspace{-0.25cm}
 \no{\it Proof of \ref{H2AiAi+2-X}}.~If $|X_1|\leq 1$, since  $T\cup \{a_1,a_3,v_4\}$ is a stable set (by \ref{ATstset}), by \cref{BTcard}, $|\overline{N}(v_2)|=|X_1|+|T\cup \{a_1,a_3,v_4\}|+|B_3\cup B_4| \leq \alpha(G)+2$; so  $v_2$ is a nice vertex in $\overline{G}$, and we are done.  Hence we may assume that $|X_1|\geq 2$. $\Diamond$

 \vspace{-0.2cm}
 \begin{claim1}\label{H2AiAi+2-cardalp}
  $|X_1\cup \{a_1,a_3,v_4\}|\leq \alpha(G)$.
  \end{claim1}

 \vspace{-0.25cm}
 \no{\it Proof of \ref{H2AiAi+2-cardalp}}.~If $X_1\cup \{a_1,a_3,v_4\}$ is a stable set, we are done. So, we may assume that, by \ref{Xstset}, there is a vertex, say $x\in X_1$ such that $a_1x\in E(G)$. Then since  $\{a_1,a_3,v_1,x,v_3\}$ does not induce a co-banner, $a_3x\in E(G)$. Also, by \ref{AXonenbd}, $\{a_1\}$ is anticomplete to $X_1\sm \{x\}$, and $\{a_3\}$ is anticomplete to $X_1\sm \{x\}$.  So, by \ref{Xstset}, $(X_1\sm \{x\})\cup \{a_1,a_3,v_4\}$ is a stable set, and since $\{v_2\}$ is anticomplete to $(X_1\sm \{x\})\cup \{a_1,a_3,v_4\}$,   $|(X_1\sm \{x\})\cup \{a_1,a_3,v_4\}|\leq \alpha(G)-1$, and hence $|X_1\cup \{a_1,a_3,v_4\}|\leq \alpha(G)$.
$\Diamond$

\vspace{-0.2cm}
 \begin{claim1}\label{H2AiAi+2-B}
We may assume that $B_3,B_4\neq \es$.
\end{claim1}

\vspace{-0.25cm}
 \no{\it Proof of \ref{H2AiAi+2-B}}.~If $|B_3\cup B_4|\leq 1$, then, by \ref{H2AiAi+2-cardalp} and \cref{BTcard}, $|\overline{N}(v_2)|=|X_1\cup \{a_1,a_3,v_4\}|+|B_3\cup B_4|+|T| \leq \alpha(G)+2$; so  $v_2$ is a nice vertex in $\overline{G}$, and we are done.
So, by \cref{BTcard}, we conclude that $B_3,B_4\neq \es$. $\Diamond$

 \vspace{-0.2cm}
 \begin{claim1}\label{H3AiAi+2ancomX1A2clqcov}
 	$\theta(G[(A_2\sm A_2')\cup X_1])\leq \alpha(G)-3$.
\end{claim1}

\vspace{-0.25cm}
 \no{\it Proof of \ref{H3AiAi+2ancomX1A2clqcov}}.~ First, since  $X$ is anticomplete to $B$ (by \ref{BXantcomp}),   by \cref{BiBi+1} and \ref{Xstset}, $X_1\cup B_3\cup B_4 \cup \{v_2\}$ is a stable set, and so  by \ref{H2AiAi+2-B}, $|X_1|\leq \alpha(G)-3$.
 Now, if $A_2\sm A_2' =\{a_2\}$, then, by \ref{H2AiAi+2-X} and \cref{AiXjmatchcom}, $\{a_2\}$ is not anticomplete to $X_1$, and hence $\theta(G[(A_2\sm A_2')\cup X_1])\leq |X_1|\leq \alpha(G)-3$. So, we may assume that $|A_2\sm A_2'|\geq 2$.

For integers $r\geq 2$ and $k\geq 2$, let $X_1:=\{p_1,p_2,\ldots p_r\}$ and $A_2\sm A_2':=\{q_1,q_2,\ldots, q_k\}$.
If $r\geq k$. Then, by \cref{AiXjmatchcom}, we may assume that   $p_{j}q_{j}\in E(G)$, where $j\in\{1,2,\ldots,k\}$, and then  $\theta(G[(A_2\sm A_2')\cup X_1])\leq |X_1|\leq \alpha(G)-3$; so suppose that $r<k$. Then, again by \cref{AiXjmatchcom}, we may assume that   $p_{j}q_{j}\in E(G)$, where $j\in\{1,2,\ldots,r\}$, and hence $\theta(G[(A_2\sm A_2')\cup X_1])\leq k=|A_2\sm A_2'|$. Now, since  $\{a_3\}$ is anticomplete to $A_2\sm A_2'$ (by \ref{Ai123}), $(A_2\sm A_2')\cup \{a_1,a_3,v_4\}$ is a stable set, and so $|A_2\sm A_2'|\leq \alpha(G)-3$.  Thus, we conclude that $\theta(G[(A_2\sm A_2')\cup X_1])\leq \alpha(G)-3$.
$\Diamond$

\vspace{-0.2cm}
 \begin{claim1}\label{H3AiAi+2ancomX2A1A3clq}
	$\theta(G[A_2'\cup X_2\cup\{a_1,a_3\}])\leq 3$.
\end{claim1}

\vspace{-0.25cm}
 \no{\it Proof of \ref{H3AiAi+2ancomX2A1A3clq}}.~To prove the claim, we partition $X_2$ as follows: $X_2':=\{x\in X_2\mid xa_2\in E(G)\}$, $X_2'':=\{x\in X_2\mid  xa_1\in E(G), xa_2\notin E(G)\}$  and $X_2''':=\{x\in X_2\mid xa_1,xa_2\notin E(G)\}$. By \ref{AXonenbd}, $|X_2'|\leq 1$, and by \cref{AiXjmatchcom}, $|X_2'''|\leq 1$. By \ref{a1a2x}, $A_2'\cup X_2'''$ is a clique.  Since for any $x\in X_2'$, $\{x,a_2,v_2,v_3,a_3\}$ does not induce a co-bannner, $X_2'\cup \{a_3\}$ is a clique. Also, if there are vertices, say  $x,x'\in X_2''$, then, for any $b\in B_4$, by \ref{ABst} and \ref{AiB+2com}, $\{a_2,b,x,a_1,x'\}$  induces a $P_2+P_3$; so $|X_2''|\leq 1$, and hence $X_2''\cup \{a_1\}$ is a clique.
So we conclude that $\theta(G[A_2'\cup X_2\cup \{a_1,a_3\}])\leq 3$.
$\Diamond$

\smallskip	
Now, by \ref{H3AiAi+2ancomX1A2clqcov} and \ref{H3AiAi+2ancomX2A1A3clq}, we have $\theta(G[A\cup X])\leq \alpha(G)$, and so,  by \cref{theta-BCDT}, we have $\theta(G)\leq \alpha(G)+3$.
 This proves \cref{H3-AiAi+2ancomforali}.
\end{proof}

\begin{lemma}\label{H2-AiAi+2K22case}
		If $A_i\cup A_{i+2}$ induces a $K_{2,2}$, for some $i\in \{1,2,3,4\}$, then $\overline{G}$ is a good graph.
	\end{lemma}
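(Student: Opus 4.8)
The plan is to follow the template used in the earlier lemmas of this subsection: we work in the complement and show that either $\overline{G}$ has a universal vertex, a pair of comparable vertices, a nice vertex, or else $\theta(G)\le\alpha(G)+3$. By the hypothesis we may fix $i=1$ and vertices $a_1,a_1'\in A_1$, $a_3,a_3'\in A_3$ with $A_1\cup A_3$ inducing a $K_{2,2}$; in particular $\{a_1,a_3\}$, $\{a_1,a_3'\}$, $\{a_1',a_3\}$, $\{a_1',a_3'\}$ are edges while $a_1a_1'$, $a_3a_3'$ are nonedges. Since we are assuming (as in \cref{H3-AiAi+2ancomforali}) that the previous cases do not apply, we may also assume $\overline G$ has no universal or comparable vertex. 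First I would pin down the small pieces using the $R$-rules: by \ref{a1b1nonnbd} each of $a_1,a_1'$ has at most one nonneighbour in $A_3$, so $|A_3|=2$ forces symmetry, and applying \ref{a1b1nonnbd} again gives $|A_1|=|A_3|=2$ exactly; then by \ref{a1a3cmnnbda2}(a) at most one vertex of $A_2$ and at most one vertex of $A_4$ is anticomplete to $\{a_1,a_3\}$ (and similarly for the other diagonal edges), which combined with \ref{A1A2onenbd} should bound $|A_2|$ and $|A_4|$ by a small constant. Likewise, by \ref{AXonenbd} and \ref{a1a3cmnnbda2}(b) the sets $X_1$, $X_2$ are bounded, and the $K_{2,2}$ on $A_1\cup A_3$ lets me invoke \ref{a1a3cmnnbda2}(c): every vertex of $D\cup X_2$ is adjacent to one of $a_1,a_3$ and also to one of $a_1',a_3'$.

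The heart of the argument is a clique cover of $G[A\cup X]$ (together with the already-available bound $\theta(G[B\cup C\cup D\cup T])\le 3$ from \cref{theta-BCDT}) that uses at most $\alpha(G)$ cliques; if instead $\alpha(G)$ turns out to be large enough relative to the number of cliques used elsewhere, the bound $\theta(G)\le\alpha(G)+3$ follows directly, and if some $A_j$ or $X_j$ is empty or a singleton we expect $v_2$ (or $v_4$) to be a nice vertex in $\overline G$ exactly as in the claims of \cref{H3-AiAi+2ancomforali,AiAi+2oneofemp}. Concretely, I would show that $A_1\cup D$ can be covered using the fact that (after discarding one exceptional vertex) $D$ is a clique and each of $a_1,a_1'$ is complete to a large part of $D$ by \ref{abtnbBD}, so that $\{a_1\}\cup D_1$ and $\{a_1'\}\cup D_2$ (a suitable partition of $D$) are cliques; and that $X_2$ is covered by at most two or three cliques built around $a_1,a_3,a_1',a_3'$ using the adjacency facts above plus \ref{Xstset}. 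Symmetrically $A_2\cup A_4\cup X_1$ gets covered, with $A_2\cup A_4$ contributing a bounded number of cliques (each $A_i$ being a stable set of bounded size, by \ref{ATstset}) and $X_1$ absorbed into cliques through its neighbours in $A_1\cup A_3$. Finally I would count: the cliques used for $G[A\cup X]$ should number at most $\alpha(G)$, since $T\cup\{a_1,v_2,v_4\}$ together with a second independent vertex of $A_3$ (or of $A_1$) already gives $\alpha(G)\ge 5$, and each small exceptional set contributes at most a constant; when the tally is tight one repeats the refinement used in \cref{H3-AiAi+2ancomforali} (splitting off a single vertex $x\in X_1$ adjacent to both $a_1$ and $a_3$, using \ref{AXonenbd} to make the rest of $X_1$ anticomplete to $\{a_1,a_3\}$, hence part of a large stable set) to save one clique.

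The main obstacle I anticipate is controlling the interaction between $X_1$ and $A_2\cup A_4$ (and symmetrically $X_2$ and $A_1\cup A_3$, though there the $K_{2,2}$ helps a lot): the $R$-rules give that each vertex of $A_2$ is adjacent to exactly one of a consecutive pair in $A_1$ via \ref{a1a2x}, but when $A_2$ and $X_1$ are both of size $2$ or $3$ one has to argue, as in \ref{H3AiAi+2ancomX1A2clqcov}, that a near-perfect matching between them exists so that $\theta(G[A_2\cup X_1])\le\max\{|A_2|,|X_1|\}$, and then bound that maximum by $\alpha(G)-3$ using a large stable set containing it — here the presence of the extra vertices $a_1',a_3'$ (which $A_2\cup A_4$ may or may not see) requires a careful case split. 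Everything else is a bookkeeping exercise in assembling cliques and comparing the total against $\alpha(G)+3$, handling the boundary cases (some part empty) by exhibiting a nice vertex in $\overline G$.
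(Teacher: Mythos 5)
Your overall framework (show a nice vertex in $\overline G$ or a clique cover certifying $\theta(G)\le\alpha(G)+3$) is the right one for this subsection, but the proposal has a genuine gap: it never finds the structural collapse that actually makes this case easy, and it leaves the hard part of its own plan unresolved. The decisive observations, both missing from your sketch, are these. First, co-banner-freeness forces $A_2\cup A_4=\es$: if $a\in A_2$, then by \ref{A1A2onenbd} it has at most one neighbour in $\{p_1,p_2\}\subseteq A_1$ and at most one in $\{q_1,q_2\}\subseteq A_3$, say $ap_1,aq_1\notin E(G)$; then $P_2+P_3$-freeness applied to $\{a,v_2,p_1,q_1,p_2\}$ and to $\{a,v_2,q_1,p_1,q_2\}$ forces $ap_2,aq_2\in E(G)$, and now $\{a,p_2,q_2,v_3,v_4\}$ is a co-banner. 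Second, $X_2=\es$: a vertex $x\in X_2$ may be assumed adjacent to $p_1,q_1$ by \cref{AiXjmatchcom} and to $p_2$ by \ref{a1a3cmnnbda2}:(c), whence $\{p_1,v_1,p_2,q_1,x\}$ induces a $\overline{P_2+P_3}$. Once $A_2\cup A_4\cup X_2=\es$, the vertex $v_1$ satisfies $\overline N(v_1)=A_3\cup T\cup B_2\cup B_3\cup\{v_3\}$, and since $A_3\cup T\cup\{v_2,v_4\}$ is stable and $|B_2\cup B_3|\le 2$ by \cref{BTcard}, $v_1$ is a nice vertex in $\overline G$; no clique cover of $A\cup X$ is needed at all.

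Two further concrete problems with the proposal as written. The deduction ``applying \ref{a1b1nonnbd} again gives $|A_1|=|A_3|=2$ exactly'' is unjustified: \ref{a1b1nonnbd} limits the number of \emph{non}neighbours a vertex of $A_1$ has in $A_3$, which does not bound $|A_1|$ or $|A_3|$ (and the paper never bounds them). And the step you yourself flag as the main obstacle --- covering $A_2\cup A_4\cup X_1$ and $A_1\cup A_3\cup X_2\cup D$ by few enough cliques and comparing the total with $\alpha(G)$, including the case analysis over which pieces are empty --- is precisely where the proof would have to live if one did not first eliminate $A_2\cup A_4$ and $X_2$; as it stands this is announced but not carried out, so the argument is incomplete even on its own terms. (A small additional slip: $a_1$ is complete to $A_3\cap\{q_1,q_2\}$ in the $K_{2,2}$, so the stable set $T\cup\{a_1,v_2,v_4\}$ cannot be enlarged by a vertex of the $K_{2,2}$ in $A_3$, and the claimed bound $\alpha(G)\ge 5$ does not follow.)
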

\begin{proof}	We may assume that $i=1$. By \ref{ATstset}, we may assume that there are vertices, say $p_1,p_2\in A_1$ and $q_1, q_2\in A_3$ such that $\{p_1,p_2\}$ is complete to $\{q_1,q_2\}$. Then we claim the following.

\vspace{-0.2cm}
 \begin{claim1}\label{H2-K22-A2A4}
 $A_2\cup A_4=\es$.
\end{claim1}

\vspace{-0.25cm}
 \no{\it Proof of \ref{H2-K22-A2A4}}.~Suppose, up to symmetry, there is a vertex, say $a\in A_2$.  By \ref{A1A2onenbd}, we may assume that $ap_1,aq_1\notin E(G)$. Then since $\{a,v_2,p_1,q_1,p_2\}$ does not induce a $P_2+P_3$, $ap_2\in E(G)$. Likewise, $aq_2\in E(G)$. But then $\{a,p_2,q_2,v_3,v_4\}$ induces a co-banner.
 $\Diamond$
	
 \vspace{-0.2cm}
 \begin{claim1}\label{H2-K22-X2}
$X_2=\es$.
\end{claim1}

\vspace{-0.25cm}
 \no{\it Proof of \ref{H2-K22-X2}}.~Suppose there is a vertex, say $x\in X_2$. Then, by \cref{AiXjmatchcom}, we may assume $xp_1,xq_1\in E(G)$. Also, by \ref{a1a3cmnnbda2}:(c), we may assume that $xp_2\in E(G)$. Then $\{p_1,v_1,p_2,q_1,x\}$ induces a $\overline{P_2+P_3}$.
  $\Diamond$

\smallskip	
	Now, by \ref{H2-K22-A2A4} and \ref{H2-K22-X2}, $|\overline{N}(v_1)|=|A_3\cup T\cup B_2\cup B_3\cup \{v_3\}|$. Since $A_3\cup T\cup \{v_2,v_4\}$ is a stable set (by \ref{ATstset}),    $|A_3\cup T|\leq \alpha(G)-2$. So, by \cref{BTcard},   $|\overline{N}(v_1)|=|A_3\cup T|+|B_2\cup B_3|+|\{v_3\}|\leq (\alpha(G)-2)+3< \alpha(G)+2$. This implies that $v_1$ is a nice vertex in $\overline{G}$, and hence $\overline{G}$ is a good graph. This proves \cref{H2-AiAi+2K22case}. \end{proof}
	
\begin{lemma}\label{H2-AiAi+2K13case}
If $A_i\cup A_{i+2}$ induces a $K_{1,3}$, for some $i\in \{1,2,3,4\}$, then  $\overline{G}$ is a good graph.
\end{lemma}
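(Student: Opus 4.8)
The plan is to follow exactly the same template as in \cref{H2-AiAi+2K22case}: use the existence of a $K_{1,3}$ inside some $A_i\cup A_{i+2}$ to severely restrict the remaining structure, and then exhibit a nice vertex in $\overline{G}$. First I would set $i=1$ and, using \ref{ATstset} (which says each $A_j$ is a stable set), place the three leaves of the $K_{1,3}$ in the larger part: say $p\in A_1$ and $q_1,q_2,q_3\in A_3$ with $p$ complete to $\{q_1,q_2,q_3\}$. The first order of business is to show $A_2\cup A_4=\es$ and $X_2=\es$, arguing as in \ref{H2-K22-A2A4} and \ref{H2-K22-X2}: for a putative vertex $a\in A_2$, \ref{A1A2onenbd} forces $a$ to miss at least two of $q_1,q_2,q_3$, and then combining this with the non-edges to $p$ produces either a $P_2+P_3$ (when $a$ is anticomplete to $\{p,q_j,q_k\}$, using \ref{ATstset}) or a $\overline{P_2+P_3}$/co-banner; a symmetric argument kills $A_4$, and \cref{AiXjmatchcom} together with \ref{a1a3cmnnbda2}\ref{X2DA1A3} handles $X_2$.

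Once $A_2=A_4=X_2=\es$, I would look at $\overline{N}(v_1)$ (equivalently $\overline{N}_G(v_1)=A_3\cup B_2\cup B_3\cup T\cup\{v_3\}$, since $X_2=\es$ removes the $X_2$ contribution and $A_4=\es$ removes it too). By \ref{ATstset}, $A_3\cup T\cup\{v_2,v_4\}$ is a stable set, so $|A_3\cup T|\le\alpha(G)-2$; by \cref{BTcard}, $|B_2|\le1$, $|B_3|\le1$, $|T|=1$ already counted, giving $|\overline{N}_G(v_1)|\le(\alpha(G)-2)+1+1+1=\alpha(G)+1\le\alpha(G)+2$. Hence $d_{\overline{G}}(v_1)\le\alpha(G)+2=\omega(\overline{G})+2$, so $v_1$ is a nice vertex in $\overline{G}$, and therefore $\overline{G}$ is good. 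That completes the lemma.

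The main obstacle, as usual in this paper's case analysis, is the bookkeeping of the small parts: I need to be careful that the $K_{1,3}$ really can be taken with its three leaves all in a single $A_j$ — the alternative, one vertex in $A_1$ and one in $A_3$ with the remaining two leaves also split, cannot occur because $A_1$ is a stable set and $A_3$ is a stable set, so the only bipartition of a $K_{1,3}$ compatible with both being stable is center-in-one-part, leaves-in-the-other; this is where \ref{ATstset} does the real work and should be invoked explicitly. A secondary subtlety is making sure the $P_2+P_3$ exhibited when $a\in A_2$ is anticomplete to two of the $q_j$'s is genuinely induced: one takes $\{a,v_2,q_j,p,q_k\}$ — here $v_2q_j,v_2q_k\notin E(G)$ since $q_j,q_k\in A_3$, $ap\notin E(G)$ trivially, $aq_j,aq_k\notin E(G)$ by assumption, $q_jq_k\notin E(G)$ by \ref{ATstset}, $pq_j,pq_k\in E(G)$, $av_2\notin E(G)$ (as $a\in A_2$ is nonadjacent to $v_2$... wait, $a\in A_2$ means $N(a)\cap C=\{v_2,v_3\}$, so actually $av_2\in E(G)$) — so more carefully one uses $\{a,v_2\}$ as the $P_2$ only when $a$ is anticomplete to $p,q_j,q_k$ and picks the $P_3$ as $q_j\,p\,q_k$, which is a path; since $\{a,v_2\}$ is anticomplete to $\{p,q_j,q_k\}$ (because $p\in A_1$ is nonadjacent to $v_2$, and each $q\in A_3$ is nonadjacent to $v_2$), this is an induced $P_2+P_3$. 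I would double-check these adjacencies against the definitions of $A_1,A_2,A_3$ before finalizing, but no deep idea is needed beyond the pattern already established in the preceding two lemmas.
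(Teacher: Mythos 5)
There is a genuine gap: your claim that $A_2\cup A_4=\es$ (and $X_2=\es$) does not hold in the $K_{1,3}$ case, and the argument you sketch cannot establish it. In \cref{H2-AiAi+2K22case} the emptiness of $A_2\cup A_4$ is forced because a vertex $a\in A_2$ has, by \ref{A1A2onenbd}, at most one neighbour in each of $A_1$ and $A_3$; with \emph{two} vertices on each side of the $K_{2,2}$ one can pick a non-neighbour $p_1$ and a non-neighbour $q_1$, force $ap_2,aq_2\in E(G)$ via $P_2+P_3$, and then $\{a,p_2,q_2,v_3,v_4\}$ is a co-banner. In the $K_{1,3}$ configuration there is only one vertex $p_1$ on the $A_1$ side, so the case $ap_1\in E(G)$ survives: a vertex $a\in A_2$ adjacent to $p_1$ and to at most one of $q_1,q_2,q_3$ creates no forbidden subgraph (the sets $\{a,p_1,q_j,v_3,v_4\}$ only induce paths). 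You implicitly acknowledge this in your own adjacency check, where the $P_2+P_3$ you exhibit requires $a$ to be anticomplete to $\{p,q_j,q_k\}$; the complementary case is exactly the one you never close. The paper's proof accordingly proves only that $\{p_1\}$ is complete to $A_2\cup A_4$, whence $|A_2|,|A_4|\le 1$ by \ref{A1A2onenbd}, and only $|X_2|\le 1$ (a single $x\in X_2$ nonadjacent to $p_1$ but adjacent to two of the $q_j$'s is not excluded).

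This breaks your final count: with $|A_2|,|A_4|,|X_2|\le 1$ rather than $=0$, one gets $|\overline{N}_G(v_1)|\le(\alpha(G)-2)+|B_2\cup B_3|+|A_2|+|A_4|+|X_2|+1\le\alpha(G)+4$, which is not enough to make $v_1$ a nice vertex in $\overline{G}$. The paper instead abandons the nice-vertex route here and produces a clique cover: $A_1\cup A_2\cup B_3$, $A_4\cup B_1$, $B_2\cup D\cup\{v_2,v_3\}$, $B_4\cup\{v_1,v_4\}$ and $X_2$ are five cliques covering $V(G)\sm(A_3\cup T\cup X_1)$, while $A_3\cup T\cup X_1$ induces a bipartite graph anticomplete to $\{v_2,v_4\}$, so $\theta(G)\le(\alpha(G)-2)+5=\alpha(G)+3$, which is condition $(e)$ of goodness for $\overline{G}$. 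To repair your proof you would either need to rule out the surviving vertices of $A_2\cup A_4\cup X_2$ (which is not possible in general) or switch to a covering argument of this kind.
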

\begin{proof}	We may assume that $i=1$.  By \ref{ATstset}, we may assume that there are vertices, say $p_1\in A_1$ and $q_1, q_2, q_3\in A_3$ such that $\{p_1\}$ is complete to $\{q_1,q_2,q_3\}$.  By \cref{H2-AiAi+2K22case}, we may assume that
$G[A_1\cup A_3]$ is $K_{2,2}$-free.
Then we claim the following.

\vspace{-0.2cm}
 \begin{claim1}\label{H2-K13-A1}
$A_1\sm \{p_1\}=\es$.
\end{claim1}

\vspace{-0.25cm}
 \no{\it Proof of \ref{H2-K13-A1}}.~If there is a vertex, say $p_2\in A_1\sm \{p_1\}$, then, by \ref{a1b1nonnbd}, we may assume that $p_2q_1, p_2q_2\in E(G)$, and then $\{p_1,p_2,q_1,q_2\}$ induces a $K_{2,2}$ in $G[A_1\cup A_3]$, a contradiction.
 $\Diamond$
	
 \vspace{-0.2cm}
 \begin{claim1}\label{H2-K13-A124}
$\{p_1\}$ is complete to $A_2$. Likewise, $\{p_1\}$ is complete to $A_4$.
\end{claim1}

\vspace{-0.25cm}
 \no{\it Proof of \ref{H2-K13-A124}}.~If there is a vertex, say $a\in A_2$ such that $ap_1\notin E(G)$, then, by \ref{A1A2onenbd}, we may assume that $aq_1,aq_2\notin E(G)$, and then $\{a,v_2,q_1,p_1,q_2\}$ induces a $P_2+P_3$.
 $\Diamond$

 \vspace{-0.2cm}
 \begin{claim1}\label{H2-K13-X2}
 $|X_2|\leq 1$.
\end{claim1}

\vspace{-0.25cm}
 \no{\it Proof of \ref{H2-K13-X2}}.~If there are vertices, say  $x_2,x_2'\in X_2$, then, by \cref{AiXjmatchcom}, we may assume that $p_1x_2\in E(G)$ and $q_1x_2,q_2x_2\in E(G)$, and then $\{p_1,q_1,v_3,q_2,x_2\}$ induces a $\overline{P_2+P_3}$.
 $\Diamond$

		\smallskip

	  By \ref{H2-K13-A124} and by \ref{A1A2onenbd}, $|A_2|\leq 1$ and $|A_4|\leq 1$. So, by \ref{H2-K13-A1}, \ref{H2-K13-A124}, \ref{AiB+2com} and by \cref{BTcard}, $A_1\cup A_2\cup B_3$ and $A_4\cup B_1$  are cliques.
	Also, by \cref{BTcard} and \cref{BDcom}, $B_2\cup D\cup \{v_2,v_3\}$ and $B_4\cup \{v_1,v_4\}$ are cliques. So, by \ref{H2-K13-X2}, we conclude that $\theta(G-(A_3\cup T\cup X_1)) \leq 5$.
	Moreover, by \ref{ATstset} and \ref{Xstset}, $A_3\cup T\cup X_1$ induces a bipartite graph, and is anticomplete to $\{v_2,v_4\}$; so $\theta(G[A_3\cup T\cup X_1])\leq  \alpha(G)-2$. Hence $\theta(G)\leq\theta(G[A_3\cup T\cup X_1])+5\leq \alpha(G)+3$. This proves \cref{H2-AiAi+2K13case}.
\end{proof}

\begin{lemma}\label{H3-AiAi+2notanticom}
	If $A_i$ is not anticomplete to $A_{i+2}$, for some $i\in \{1,2,3,4\}$, then $\overline{G}$ is a good graph.
\end{lemma}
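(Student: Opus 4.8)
By symmetry we may take $i=1$ and fix an edge $pq$ with $p\in A_1$ and $q\in A_3$. Recall that each $A_j$ is a stable set (\ref{ATstset}), that $|B_j|\le 1$ and $T=\{t^{\ast}\}$ (\cref{BTcard}), that $D$ is a clique and $B$ is complete to $D$ (\cref{BDcom}), that $X$ is complete to $D\cup T$ (\ref{XDcom} and \cref{XTcom}), and that $\theta(G[B\cup C\cup D\cup T])\le 3$ (\cref{theta-BCDT}). The plan is to show that either some vertex is a nice vertex of $\overline G$ or $\theta(G)\le\alpha(G)+3$.

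\textbf{Bounding $|A|$.} By \ref{a1b1nonnbd}, $q$ has at most one nonneighbour in $A_1$; so if $|A_1|\ge 4$ then $q$ is adjacent to three vertices of $A_1$, and since $A_1$ is stable these three vertices together with $q$ induce a $K_{1,3}$ with centre in $A_3$, whence $\overline G$ is good by \cref{H2-AiAi+2K13case}. Hence $|A_1|\le 3$, and symmetrically $|A_3|\le 3$. For $A_2$: by part~(a) of \ref{a1a3cmnnbda2} at most one vertex of $A_2$ is anticomplete to $\{p,q\}$, and by \ref{A1A2onenbd} each of $p$ and $q$ has at most one neighbour in $A_2$; so $|A_2|\le 3$, and symmetrically $|A_4|\le 3$. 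Thus $|A|\le 12$. I would also record the cross–relations: by \cref{AiXjmatchcom}, $X_2$ is almost complete to $A_1\cup A_3$ and $X_1$ is almost complete to $A_2\cup A_4$ (each vertex missing at most one on the other side), and $|X_1|,|X_2|\le\alpha(G)-2$ (from the stable sets $X_1\cup\{v_2,v_4\}$ and $X_2\cup\{v_1,v_3\}$).

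\textbf{A nice vertex, or a clique cover.} Since $D\subseteq N_G(v_j)$ for every $j$, one has $\overline N_G(v_1)=\{v_3\}\cup A_2\cup A_3\cup A_4\cup B_2\cup B_3\cup X_2\cup T$ and $\overline N_G(v_4)=\{v_2\}\cup A_1\cup A_2\cup A_3\cup B_1\cup B_2\cup X_1\cup T$, and with $|A|\le 12$, $|B_j|\le 1$, $|T|=1$ this gives $|\overline N_G(v_1)|\le|X_2|+13$ and $|\overline N_G(v_4)|\le|X_1|+13$. So if $|X_2|\le\alpha(G)-11$ then $v_1$ is a nice vertex of $\overline G$, and if $|X_1|\le\alpha(G)-11$ then $v_4$ is; hence we may assume $\alpha(G)-10\le|X_1|,|X_2|\le\alpha(G)-2$. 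In this regime I would cover $G$ by at most $\alpha(G)+3$ cliques. The bulk is $G[D\cup X_1\cup X_2]$, a join of the clique $D$ with the bipartite graph between the stable sets $X_1$ and $X_2$; this bipartite graph has few non–edges (for instance, if more than $10$ vertices of $X_1$ were anticomplete to $X_2$ their union with $X_2$ would be a stable set of size exceeding $\alpha(G)$, a contradiction), so a matching argument matching almost all of $X_1$ to $X_2$ and distributing $D$ over the matched pairs yields $\theta(G[D\cup X_1\cup X_2])\le\max(|X_1|,|X_2|)+O(1)\le\alpha(G)+O(1)$; the $O(1)$ remaining vertices of $A$, $B$, $C$, $T$ are then absorbed into these cliques or added as a bounded number of extra ones, using the almost–completeness of $A_1\cup A_3$ to $X_2$, of $A_2\cup A_4$ to $X_1$, and \cref{theta-BCDT}.

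\textbf{Main obstacle.} The crux is the last step: forcing every additive constant down so that the total does not exceed $\alpha(G)+3$. When the leftover $A/B/C/T$–vertices cannot all be absorbed — which should happen only for finitely many ``extremal'' configurations of $X_1,X_2$ and the small sets $A_2,A_4$ — one needs a short supplementary case analysis (on whether $X_1$ is complete to $X_2$, on the adjacency of $B$ to $X_1\cup X_2$, and on whether $X_1$ has many vertices anticomplete to $X_2$, the last of these forcing $\alpha(G)$ below an absolute constant) in which one instead exhibits a nice vertex of $\overline G$, or three pairwise disjoint cliques of $G$ whose deletion lowers $\alpha(G)$ by $2$, so that $\overline G$ is a nice graph.
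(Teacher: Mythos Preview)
Your proposal is incomplete, and the ``Main obstacle'' paragraph is not a technicality but the whole proof: as written, you never reach $\theta(G)\le\alpha(G)+3$, only $\theta(G)\le\alpha(G)+O(1)$ with an unspecified constant, and the sketch for closing the gap (matching in the bipartite graph between $X_1$ and $X_2$, then ``absorbing'' the bounded leftover) is not justified. In particular, your claim that the $X_1$--$X_2$ bipartite graph has ``few non-edges'' is not established: showing that at most ten vertices of $X_1$ are anticomplete to $X_2$ says nothing about vertices of $X_1$ that have \emph{some} neighbour in $X_2$ but miss many others, so no matching of the required size is guaranteed. Absorbing twelve $A$-vertices, four $C$-vertices, four $B$-vertices and $t^{\ast}$ into an additive $+3$ is likewise not carried out.

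The decisive observation you are missing is that both $|X_1|$ and $|X_2|$ are at most $3$, not merely at most $\alpha(G)-2$. For $X_2$: by \ref{a1a3cmnnbda2}\ref{X2DA1A3} every $x\in X_2$ is adjacent to at least one of $p,q$, so $X_2$ splits into $X_2'=\{x:xp,xq\in E\}$, $X_2''=\{x:xp\in E, xq\notin E\}$, $X_2'''=\{x:xq\in E, xp\notin E\}$; two vertices in $X_2'$ would give a $\overline{P_2+P_3}$ on $\{p,x,v_2,x',q\}$, and \cref{AiXjmatchcom} bounds each of $X_2'',X_2'''$ by one. Matching these three singletons with $\{p,q\}$ and with the at most two vertices in each of $A_1\setminus\{p\}$, $A_3\setminus\{q\}$ gives $\theta(G[A_1\cup A_3\cup X_2])\le 3$. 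A symmetric argument (partitioning $X_1$ by adjacency to $p,q$ and using \ref{AXonenbd} and \cref{XTcom}) yields $|X_1|\le 3$ and $\theta(G[A_2\cup A_4\cup X_1])\le 3$. Together with $\theta(G[B\cup C\cup D\cup T])\le 3$ this gives $\theta(G)\le 9$ outright. Since $\{p,t^{\ast},v_2,v_4\}$ is stable, $\alpha(G)\ge 4$, so only the cases $\alpha(G)\in\{4,5\}$ remain, and these are finite checks (for $\alpha(G)=4$ one gets $|A_i|\le 1$ and $|X_j|\le 2$, forcing $\theta\le 7$; for $\alpha(G)=5$ one has $|V(G)|\le 23$ and a short analysis gives $\theta\le 8$). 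This is what the paper does; your detour through nice vertices and matchings is unnecessary once $|X_j|\le 3$ is in hand.
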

\begin{proof}
We may assume that $i=1$. Let $a_1\in A_1$ and $a_3\in A_3$ be such that $a_1a_3\in E(G)$.
We define $A_1':=\{a\in A_1\sm \{a_1\}\mid aa_3\in E(G)\}$, $A_1'':=\{a\in A_1\sm \{a_1\}\mid aa_3\notin E(G)\}$, $A_3':=\{a\in A_3\sm \{a_3\}\mid aa_1\in E(G)\}$ and $A_3'':=\{a\in A_3\sm \{a_3\}\mid aa_1\notin E(G)\}$. Then $A_1\sm \{a_1\}= A_1'\cup A_1''$ and $A_3\sm \{a_3\}= A_3'\cup A_3''$.
By \cref{H2-AiAi+2K13case}, $|A_1'|\leq 1$ and $|A_3'|\leq 1$. By \ref{a1b1nonnbd}, $|A_1''|\leq 1$ and $|A_3''|\leq 1$.  Now we claim the following:

\vspace{-0.2cm}
 \begin{claim1}\label{H3A1A3notancomclq}
	$\theta(G[A_1\cup A_3\cup X_2])\leq 3$.
\end{claim1}
	
\vspace{-0.25cm}
 \no{\it Proof of \ref{H3A1A3notancomclq}}.~To prove the claim, by \ref{a1a3cmnnbda2}:\ref{X2DA1A3}, first we partition $X_2$ as follows: $X_2':=\{x\in X_2\mid a_1x,a_3x\in E(G)\}$, $X_2'':=\{x\in X_2\mid a_1x\in E(G),a_3x\notin E(G)\}$ and $X_2''':=\{x\in X_2\mid a_3x\in E(G),a_1x\notin E(G)\}$. Now if there are vertices, say $x,x'\in X_2'$, then $\{a_1,x,v_2,x',a_3\}$ induces a $\overline{P_2+P_3}$; so $|X_2'|\leq 1$, and hence $X_2'\cup\{a_1,a_3\}$ is a clique. Also, by \cref{AiXjmatchcom}, $|X_2''|\leq 1$ and $|X_2'''|\leq 1$. Moreover,  for any $x\in X_2''$ and $a\in A_3''$, since $\{a,v_3,x,a_1,v_1\}$ does not induce a $P_2+P_3$, $A_3''$ is complete to $X_2''$. So by \ref{a1a3cmnnbda2}:\ref{X2DA1A3}, $A_1'\cup A_3''\cup X_2''$ is a clique. Likewise, $A_1''\cup A_3'\cup X_2'''$ is also a clique. Hence $\theta(G[A_1\cup A_3\cup X_2])\leq 3$. $\Diamond$

\vspace{-0.2cm}
 \begin{claim1}\label{H3A2A4ancomclq}
	 $\theta(G[A_2\cup A_4\cup X_1])\leq 3$.
\end{claim1}

\vspace{-0.25cm}
 \no{\it Proof of \ref{H3A2A4ancomclq}}.~If  $A_2$ is not anticomplete to $A_4$, then as in \ref{H3A1A3notancomclq}, we have $\theta(G[A_2\cup A_4\cup X_1])\leq 3$. So we may assume that $A_2$ is anticomplete to $A_4$.   We partition $X_1$  as follows: $X_1':=\{x\in X_1\mid a_1x\in E(G)\}$, $X_1'':=\{x\in X_1\mid a_1x\notin E(G),a_3x\in E(G)\}$ and $X_1''':=\{x\in X_1\mid a_1x,a_3x\notin E(G)\}$. By \ref{AXonenbd}, $|X_1'|\leq 1$ and $|X_1''|\leq 1$.
 If there are vertices $x,x'\in X_1'''$, by \cref{XTcom},    $\{a_1,a_3,x,t^*,x'\}$ induces a $P_2+P_3$; so $|X_1'''|\leq 1$. Hence $|X_1|\leq 3$. If $A_2,A_4\neq \es$, then, by \ref{a1b1nonnbd}, $|A_2|\leq 1$ and $|A_4|\leq 1$, and so, by \cref{AiXjmatchcom}, $\theta(G[A_2\cup A_4\cup X_1])\leq 3$. If $A_4=\es$ (up to symmetry), then, by \ref{a1a3cmnnbda2}:(a) and \ref{A1A2onenbd}, $|A_2|\leq 3$, and again by \cref{AiXjmatchcom}, $\theta(G[A_2\cup A_4\cup X_1])\leq 3$.  $\Diamond$

Now by \cref{theta-BCDT}, \ref{H3A1A3notancomclq} and  \ref{H3A2A4ancomclq}, $\theta(G)\leq 9$. If $\alpha(G)\geq 6$, we have $\theta(G)\leq \alpha(G)+3$. So it is enough to prove the lemma for $\alpha(G)\leq 5$ . Since $T\cup \{a_1,v_2,v_4\}$ is a stable set of size $4$ (by \ref{ATstset} and since $T\neq \es$), $\alpha(G)\geq 4$. Recall that, by \ref{Xstset},   $X_1\cup \{v_2,v_4\}$ and $X_2\cup\{v_1,v_3\}$ are stable sets, and
  by \ref{ATstset}, $A_i\cup T\cup \{v_{i+1},v_{i-1}\}$ is a stable set, for each $i$.
Then:

\vspace{-0.2cm}
 \begin{claim1}\label{H3alphafour}
	If $\alpha(G)= 4$, then $\theta(G)\leq 7$.
\end{claim1}

\vspace{-0.25cm}
 \no{\it Proof of \ref{H3alphafour}}.~Since $\alpha(G)= 4$, $|X_1|\leq 2$, $|X_2|\leq 2$, and  $|A_i|\leq 1$ for each $i$. For $j\in \{1,2\}$, if there are nonadjacent vertices $a\in A_j$ and $a'\in A_{j+2}$, by \ref{ATstset}, $T\cup \{a,a',v_{j+1},v_{j-1}\}$ is a stable set of size of  at least $5$, $A_j$ is complete to $A_{j+2}$. Then, by \ref{a1a3cmnnbda2}:\ref{X2DA1A3}, $\theta(G[A_1\cup A_3\cup X_2])\leq 2$ and $\theta(G[A_2\cup A_4\cup X_1])\leq 2$. So, by \cref{theta-BCDT}, we conclude that $\theta(G)\leq 7$.   $\Diamond$

By \ref{H3alphafour}, we may assume that $\alpha(G)=5$. As earlier, $|X_1|\leq 3, |X_2|\leq 3$, and $|A_i|\leq 2$ for each $i$. Hence, $|V(G)|\leq 23$ and $\alpha(G)=5$. So, as in \cref{AiAi+2oneofemp} or as in \cref{H3-AiAi+2ancomforali}:~Case~1, it is not hard to verify that $\theta(G)\leq 8$, and we omit the details.
\end{proof}

Now, we are in a position to prove the main result of this section, and is given below.

\begin{thm}\label{thm:caseH3}
 If $G$ is a ($P_2+P_3$, $\overline{P_2+P_3}$, $K_{2}+K_{3}$, co-banner, $\overline{H_2}$)-free graph that contains an $\overline{H_3}$, then $\overline{G}$ is a good graph.
\end{thm}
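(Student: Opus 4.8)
The plan is to recognize that, after the preparatory work of this subsection, the theorem is just a two-case split. First I would dispose of the degenerate cases directly from the definition of a \emph{good} graph: if $\overline{G}$ has a universal vertex, it is good by clause $(b)$, and if $\overline{G}$ has a pair of comparable vertices, it is good by clause $(a)$. So from now on I may assume $\overline{G}$ has neither; but this is precisely the standing hypothesis under which \cref{H3-Prop1,H3-Prop2,AiAi+2oneofemp,H3-AiAi+2ancomforali,H2-AiAi+2K22case,H2-AiAi+2K13case,H3-AiAi+2notanticom} were proved, after fixing an induced $\overline{H_3}$ labelled as in Figure~\ref{fig:partfig2} and partitioning $V(G)\sm C$ as in \cref{genprop}.

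Next I would split according to the adjacency between opposite classes $A_i$ and $A_{i+2}$. If $A_i$ is not anticomplete to $A_{i+2}$ for some $i\in\{1,2,3,4\}$, then \cref{H3-AiAi+2notanticom} already yields that $\overline{G}$ is a good graph. In the remaining case, $A_i$ is anticomplete to $A_{i+2}$ for every $i\in\{1,2,3,4\}$, and then \cref{H3-AiAi+2ancomforali} yields that $\overline{G}$ is a good graph. These two cases are exhaustive, so the proof is complete.

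I do not expect any genuine obstacle in the theorem statement itself: all of the work has been front-loaded into the lemmas, and that is where the real bookkeeping lives. \cref{H3-Prop1} pins down $|B_i|\le 1$, $|T|=1$, the completeness relations among $B$, $D$, $T$, and the clique-cover bound $\theta(G[B\cup C\cup D\cup T])\le 3$; \cref{H3-Prop2} controls the near-matchings between the $A_j$'s and the $X_k$'s; \cref{AiAi+2oneofemp}, \cref{H2-AiAi+2K22case} and \cref{H2-AiAi+2K13case} handle the cases where some $A_i\cup A_{i+2}$ is small, or induces a $K_{2,2}$ or a $K_{1,3}$; and \cref{H3-AiAi+2ancomforali} together with \cref{H3-AiAi+2notanticom} assemble these pieces into clique covers witnessing $\theta(G)\le \alpha(G)+3$, falling back on a near-exhaustive check when $\alpha(G)\le 5$ (equivalently, when $\overline{G}$ has bounded clique number and few vertices). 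The only genuinely ``new'' content of the theorem is the trivial observation that, once the universal-vertex and comparable-pair cases are cleared away, the dichotomy ``some $A_i$ meets $A_{i+2}$'' versus ``every $A_i$ misses $A_{i+2}$'' is complete, so one of the two lemmas always applies.
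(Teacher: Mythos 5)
Your proposal is correct and matches the paper's own proof essentially verbatim: after discharging the universal-vertex and comparable-pair cases (under which the preparatory lemmas were proved), the paper also concludes by splitting on whether some $A_i$ is not anticomplete to $A_{i+2}$ and invoking \cref{H3-AiAi+2notanticom} or \cref{H3-AiAi+2ancomforali} accordingly. No differences worth noting.
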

\begin{proof}
Let $G$ be a ($P_2+P_3$, $\overline{P_2+P_3}$, $K_{2}+K_{3}$, co-banner, $\overline{H_2}$)-free graph such  that $\overline{G}$ has no universal vertex or a pair of comparable vertices. Suppose that $G$ contains an $\overline{H_3}$ an $H_1$  with vertices and edges as in Figure~\ref{fig:partfig2}. Let $C:=\{v_1,v_2,v_3,v_4\}$ and we partition $V(G)\sm C$ as in Section~\ref{genprop}. We split the proof into two cases depending on the edges with one end in $A_i$ and the other in $A_{i+2}$, where $i\in \{1,2,3,4\}$, and the theorem follows from \cref{H3-AiAi+2ancomforali} and \cref{H3-AiAi+2notanticom}.
 \end{proof}

\subsection{($P_2+P_3$, $\overline{P_2+P_3}$,  $K_{2,3}$, banner,  $H_3$)-free graphs that contain a $C_4$}

Let $G$ be a ($P_2+P_3$, $\overline{P_2+P_3}$, $K_{2,3}$, banner, $H_2$, $H_3$)-free graph which has   no universal  or a pair of comparable vertices. Suppose that $G$ contains a $C_4$, say with vertex-set $C:=\{v_1,v_2,v_3,v_4\}$ and edge-set $\{v_1v_2,v_2v_3,v_3v_4,v_4v_1\}$.   We partition $V(G)\sm C$ as in Section~\ref{genprop},  and we use the properties in Section~\ref{genprop}. Clearly, since $G$ is $K_{2,3}$-free, $X=\es$, and since $G$ is banner-free, $A=\es$.  Moreover, the graph $G$ has some more properties which we give in \cref{final-Prop,lem:BiBi+2emp,lem:BiBi+2com} below.

 \begin{lemma}\label{final-Prop}For  $i\in \{1,2,3,4\}$, the following hold:

\vspace{-0.3cm}
\begin{lemmalist}
\item\label{BiBi+1comp}    $B_i$ is complete to $B_{i+1}\cup B_{i-1}$.
\item\label{Biclq}  If $B_{i+1}\neq \es$, then $B_i$ is a clique.
\end{lemmalist}
\end{lemma}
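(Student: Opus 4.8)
The plan is to establish \cref{BiBi+1comp} first and then deduce \cref{Biclq} from it. For \cref{BiBi+1comp}, note that the assertion that $B_i$ is complete to $B_{i-1}$ is the same as the assertion that $B_{i-1}$ is complete to $B_{(i-1)+1}$, so it suffices to prove that $B_i$ is complete to $B_{i+1}$ for every $i$. Suppose this fails, and pick $b\in B_i$ and $b'\in B_{i+1}$ with $bb'\notin E(G)$; note $b\neq b'$ and $b,b'\notin C$, so $C\cup\{b,b'\}$ consists of six vertices. Using $N(b)\cap C=\{v_i,v_{i+1}\}$ and $N(b')\cap C=\{v_{i+1},v_{i+2}\}$, I would verify that $G[C\cup\{b,b'\}]$ is isomorphic to $H_3$. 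The cleanest way is to pass to the complement $\overline{G[C\cup\{b,b'\}]}$: there $\{b,b',v_i,v_{i+2}\}$ induces a $4$-cycle with cyclic order $b,b',v_i,v_{i+2}$, the vertex $v_{i-1}$ is adjacent exactly to the consecutive pair $\{b,b'\}$ on that cycle, and $v_{i+1}$ is adjacent only to $v_{i-1}$ — which is precisely the structure of an $\overline{H_3}$ as in Figure~\ref{fig:partfig2}. Hence $G[C\cup\{b,b'\}]$ is an induced $H_3$, contradicting $H_3$-freeness, and \cref{BiBi+1comp} follows.

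For \cref{Biclq}, assume $B_{i+1}\neq\es$, fix $b'\in B_{i+1}$, and suppose for contradiction that $B_i$ is not a clique, so there exist $b_1,b_2\in B_i$ with $b_1b_2\notin E(G)$. By \cref{BiBi+1comp}, $b'b_1,b'b_2\in E(G)$; combined with $b_1b_2\notin E(G)$ and $b'v_i\notin E(G)$, the set $\{b_1,v_i,b_2,b'\}$ induces a $4$-cycle in the cyclic order $b_1,v_i,b_2,b'$. Moreover $v_{i+2}$ is adjacent to $b'$ and to none of $b_1,b_2,v_i$, so $v_{i+2}$ is a pendant vertex at $b'$. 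Thus $G[\{b_1,b_2,b',v_i,v_{i+2}\}]$ is a banner, contradicting banner-freeness, and therefore $B_i$ is a clique.

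The bookkeeping of adjacencies between $b,b',b_1,b_2$ and $C$ is routine, being just the definition of the classes $B_j$ together with the standing facts $A=\es$ and $X=\es$ in this subsection. The one place needing care is the identification in \cref{BiBi+1comp} of the six-vertex induced subgraph as an $H_3$ (equivalently, of its complement as an $\overline{H_3}$) rather than a superficially similar six-vertex graph such as $H_2$; a degree-sequence check ($(2,2,2,3,3,4)$ for $H_3$ versus $(1,2,2,2,2,3)$ for $H_2$) removes any ambiguity. I do not expect any other difficulty.
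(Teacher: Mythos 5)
Your proof is correct and takes essentially the same route as the paper: for \cref{BiBi+1comp} the paper likewise observes that a non-edge between $B_i$ and $B_{i+1}\cup B_{i-1}$ makes $C\cup\{b,b'\}$ induce an $H_3$ (you merely verify the isomorphism explicitly, which checks out against the description of $\overline{H_3}$ used elsewhere in the paper), and for \cref{Biclq} both arguments extract a banner from the $C_4$ on $\{b_1,v_i,b_2,b'\}$. The only cosmetic difference is the pendant vertex: you attach $v_{i+2}$ at $b'$, while the paper attaches $v_{i-1}$ at $v_i$; both are valid.
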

\begin{proof}
$(i)$:~If there are nonadjacent vertices, say $b\in B_i$ and $b'\in B_{i+1}\cup B_{i-1}$, then $C\cup \{b,b'\}$ induces an $H_3$. $\diamond$

\no{$(ii)$}:~If there are nonadjacent vertices, say $b,b'\in B_i$, then, for any $b'' \in B_{i+1}$, by \cref{BiBi+1comp}, $\{b,v_i,b',$ $b'',v_{i-1}\}$ induces a banner.
\end{proof}

 \begin{lemma}\label{lem:BiBi+2emp}
	If  $B_{i}$ and $B_{i+2}$ are empty, for some $i\in \{1,2,3,4\}$, then  $G$ is a good graph.
\end{lemma}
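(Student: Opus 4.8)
The plan is to push the partition structure as far as the ``no comparable vertices'' hypothesis permits and then either locate a nice vertex or verify that $G$ is a nice graph. First I would record the basic reductions. Since $G$ is $K_{2,3}$-free we have $X=\es$, and since $G$ is banner-free we have $A=\es$; so, relabelling so that $B_1=B_3=\es$, we get $V(G)=B_2\cup B_4\cup C\cup D\cup T$. Here $N(v_1)=B_4\cup D\cup\{v_2,v_4\}$ and $N(v_3)=B_2\cup D\cup\{v_2,v_4\}$, and $v_1v_3\notin E(G)$; hence if $B_2=\es$ then $N(v_3)\subseteq N(v_1)$, and if $B_4=\es$ then $N(v_1)\subseteq N(v_3)$, either way giving a pair of comparable vertices, a contradiction. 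So $B_2\neq\es$ and $B_4\neq\es$ (and, as $b\in B_2$ gives a triangle $\{b,v_2,v_3\}$, also $\omega(G)\ge 3$). Next, for $t\in T$ we have $N(t)\subseteq B_2\cup B_4\cup D$ (as $T$ is stable, anticomplete to $C$, and $A=X=\es$), and by \ref{abtnbBD} both $N(t)\cap(D\cup B_2)$ and $N(t)\cap(D\cup B_4)$ are cliques; so if some $t\in T$ has no neighbour in $B_2$ or none in $B_4$, then $N(t)$ itself is a clique, so $\{t\}\cup N(t)$ is a clique and $d_G(t)\le\omega(G)-1$, making $t$ a nice vertex. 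Hence I may assume every vertex of $T$ has a neighbour in both $B_2$ and $B_4$.

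The second step is to pin down $B_2,B_4$ and their interaction with $D$. By \ref{prop-K23}:\ref{a1b1b3nbd} each vertex of $B_2$ has at most one neighbour in $B_4$, and conversely, so the edges between $B_2$ and $B_4$ form a matching. A short $P_2+P_3$ argument shows that $B_2\setminus N(c)$ is a clique for every $c\in B_4$ (if $b,b'$ were nonadjacent there, then $\{c,v_4,b,v_2,b'\}$ would induce a $P_2+P_3$), and symmetrically $B_4\setminus N(b)$ is a clique for every $b\in B_2$. Combined with the matching, this gives: if $B_2$ has two nonadjacent vertices $b,b'$, then every $c\in B_4$ is adjacent to exactly one of $b,b'$, forcing $|B_4|\le 2$; and symmetrically, if $B_4$ is not a clique then $|B_2|\le 2$. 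So I am left with: either $B_2$ and $B_4$ are both cliques, or one of them has at most two vertices. Also, since $\{v_2,v_3\}$ is complete to $B_2\cup D$ and $\{v_1,v_4\}$ is complete to $B_4\cup D$, every clique of $G[B_2\cup D]$, of $G[B_4\cup D]$ and of $G[D]$ has size at most $\omega(G)-2$; since a vertex of $B_4$ has at most one neighbour in $B_2$, a clique using two vertices of one of $B_2,B_4$ avoids the other; and using \ref{bibi+2cmnnbd} (for a matching edge $bc$, $N(b)\cap D=N(c)\cap D$) together with \ref{abtnbBD}, one gets $\omega(G[B_2\cup B_4\cup D])\le\omega(G)-1$, with every clique of that graph of size exactly $\omega(G)-1$ equal to $\{b,c\}\cup K'$ for a matching edge $bc$ and a clique $K'=N(b)\cap D=N(c)\cap D$ of $D$ of size $\omega(G)-3$.

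To finish, I would set $S_1:=T\cup\{v_1,v_3\}$ (a stable set, by \ref{ATstset}) and $S_2:=\{v_2,v_4\}$; these are disjoint stable sets covering $C\cup T$, so $G$ is a nice graph as soon as I produce a stable set $S_3\subseteq B_2\cup B_4\cup D$ meeting every clique $\{b,c\}\cup K'$ described above. When $\omega(G)=3$ this is immediate: then $\omega(G[B_2])\le 1$ so $B_2,B_4$ are stable and $B_2,B_4$ are anticomplete to $D$, so $G[B_2\cup B_4\cup D]$ is essentially a matching, and taking $S_3$ to be the $B_2$-endpoints of the matching edges works; the same idea handles the cases where one of $B_2,B_4$ is small. \emph{The main obstacle is the remaining regime, where $B_2$ and $B_4$ are large cliques}: then their matched endpoints form cliques, so no single stable set can separate many matching edges, and one must instead destroy the cliques $\{b,c\}\cup K'$ through their common $D$-neighbourhoods $K'$, using that (by \ref{Dper}) $G[D]$ is perfect with $\omega(G[D])\le\omega(G)-2$, i.e.\ $D$ splits into at most $\omega(G)-2$ cliques in $\overline{G}$ (equivalently, into at most $\omega(G)-2$ stable sets). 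Controlling how the various $K'$ overlap — where the interplay of \ref{abtnbBD}, \ref{bibi+2cmnnbd} and the complete multipartite structure of $D$ has to be exploited, via a small case split according to whether $B_2,B_4$ are cliques and how many ``heavy'' matching edges there are — is the crux; the leftover small-$\omega(G)$ configurations can then be dispatched by a direct colouring controlled by $\chi(G[D])\le\omega(G)-2$.
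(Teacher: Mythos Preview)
Your setup is correct as far as it goes—the reductions to $A=X=\es$, the nonemptiness of $B_2$ and $B_4$, the matching structure between them, and the description of the $(\omega(G)-1)$-cliques in $G[B_2\cup B_4\cup D]$ are all sound. However, the proposal has a genuine gap precisely where you say ``the main obstacle'' lies: you never actually produce the stable set $S_3$ in the case where $B_2$ and $B_4$ are cliques and $D\neq\es$, and your sketch (``destroy the cliques $\{b,c\}\cup K'$ through their common $D$-neighbourhoods'') does not point to a construction that works. This is the entire content of the lemma; everything else is routine.

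The paper resolves this case with a specific and nontrivial choice that you are missing. Relabelling to match your indices: if some $d\in D$ is complete to $B_2$ or to $B_4$, take $S_3:=\{d\}$; otherwise take $S_3$ to be a \emph{maximum} stable set of $G[B_2\cup B_4\cup D]$ that meets both $D$ and $B_2\cup B_4$. One then checks that any clique $K$ of size $\geq\omega(G)-1$ in $G[(B_2\cup B_4\cup D)\setminus S_3]$ must meet both $B_2$ and $B_4$, and derives a contradiction using \ref{bibi+2cmnnbd} (to move $d$ into the clique and add $\{v_2,v_3\}$), together with \ref{prop-K23}:\ref{a1b1b3nbd} and the maximality of $S_3$ in the second branch. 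The argument is short but not automatic; ``controlling how the various $K'$ overlap'' via the complete-multipartite structure of $D$ is not what makes it work.

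Two smaller points. First, your detour through $T$ (forcing each $t\in T$ to have neighbours in both $B_2$ and $B_4$) is unnecessary: $T$ is absorbed whole into $S_1=T\cup\{v_1,v_3\}$, so nothing about $N(t)$ matters. Second, your dichotomy ``both cliques, or one of $B_2,B_4$ has at most two vertices'' is coarser than needed. The paper shows directly, using banner-freeness, that once there is a single edge between $B_2$ and $B_4$ both sets are cliques (if $b,b'\in B_2$ are nonadjacent and $c\in B_4$ is adjacent to $b$, then $\{b,c,v_4,v_3,b'\}$ induces a banner or a $K_{2,3}$). So there is no separate ``one side small but not a clique'' case to handle; the only split is ``anticomplete'' (immediate) versus ``both cliques'' (the case above). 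The $D=\es$ subcase of the latter is then dispatched by observing that $G[B_2\cup B_4\cup C]$ is co-bipartite, giving $\chi(G)\le\omega(G)+1$ directly rather than via a nice-graph argument.
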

\begin{proof} We may assume that $B_2\cup B_4=\es$. If $B_1$ is anticomplete to $B_3$, then we define $S_1:=\{v_1,v_3\}$, $S_2:=\{v_2\}$ and $S_3:=T\cup \{v_4\}$. Then clearly, $S_1, S_2$ and $S_3$ are stable sets such that $\omega(G-(S_1\cup S_2\cup S_3))\leq \omega(G)-2$, and so $G$ is a nice graph. So, we may assume that, there are vertices, say $b_1\in B_1$ and $b_3\in B_3$ such that $b_1b_3\in E(G)$. Suppose that there are nonadjacent vertices, say $b,b'\in B_1$. If $b,b'\neq b_1$, then by \ref{prop-K23}:\ref{a1b1b3nbd},  $\{b_3,v_4,b,v_2,b'\}$ induces a $P_2+P_3$, and if $b=b_1$, then $\{b_1,b_3,v_4,v_1,b'\}$ induces a banner or a $K_{2,3}$. So, we conclude that $B_1$ is a clique. Likewise, $B_3$ is  a clique.
Now, if $D= \es$, then, by \ref{prop-K23}:\ref{a1b1b3nbd}, $G[B_1\cup B_3\cup C]$ is the  complement of a bipartite graph, and then, by \ref{ATstset}, we have $\chi(G)\leq \omega(G)+3$; so $D\neq \es$. If there is a vertex, say $d\in D$ such that $\{d\}$ is complete to either $B_1$ or $B_3$, then we let $S_1:=\{d\}$, otherwise let $S_1$ be the maximum stable set in $G[B_1\cup B_3\cup D]$ such that $D\cap S_1\neq \es$ and $(B_1\cup B_3)\cap S_1\neq \es$. Let $S_2:=T\cup \{v_1,v_3\}$ and $S_3:=\{v_2,v_4\}$. Then $S_1, S_2$ and $S_3$ are stable sets. We claim that for any maximum clique $Q$ in $G-(S_1\cup S_2\cup S_3)$,  we have $|Q|\leq \omega(G)-2$.  Suppose not, and let $K$ be a maximum clique  in $G-(S_1\cup S_2\cup S_3)$ such that $|K|\geq \omega(G)-1$.
Since $\{v_1,v_2\}$ is complete to $B_1\cup D$, and $\{v_3,v_4\}$ is complete to $B_3\cup D$, we may  assume that, $K\cap B_1, K\cap B_3\neq \es$. By \ref{prop-K23}:\ref{a1b1b3nbd}, we let $K\cap B_1:=\{b_1'\}$ and $K\cap B_3:=\{b_3'\}$.
If $D\cap S_1$ is not anticomplete to $K$, then, by \ref{bibi+2cmnnbd}, for any $d\in (D\cap S_1)\cap N(b_1')$, $(K\sm \{b_3'\})\cup \{d,v_1,v_2\}$ is a clique of size at least $\omega(G)+1$, a contradiction; so $D\cap S_1$ is anticomplete to $K$. Moreover, if $B_1\cap S_1=\es$, then by \ref{prop-K23}:\ref{a1b1b3nbd}, $S_1\cup \{b_1'\}$ is a stable set which  contradicts the choice of $S_1$; so $B_1\cap S_1\neq \es$. Likewise, $B_3\cap S_1\neq \es$. Then for any $b_1\in B_1\cap S_1$, $b_3\in B_3\cap S_1$ and $d\in D\cap S_1$, by \ref{prop-K23}:\ref{a1b1b3nbd}, $\{b_3,b_3',b_1,v_1,d\}$ induces a $P_2+P_3$, a contradiction. So $G$ is a nice graph, and that $G$ is a good graph.
  \end{proof}

\begin{lemma}\label{lem:BiBi+2com}
	If $B_{i}$ is complete to $B_{i+2}$, for each $i\in \{1,2,3,4\}$,  then  $G$ is a good graph.
\end{lemma}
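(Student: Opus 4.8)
The plan is to prove that $G$ is a good graph by a case analysis built on the structure the hypothesis forces. First I would record that $X=\emptyset$ (since $G$ is $K_{2,3}$-free) and $A=\emptyset$ (since $G$ is banner-free), so $V(G)=B\cup C\cup D\cup T$, and that we may assume $G$ has neither a universal vertex nor a pair of comparable vertices (otherwise $G$ is already good). Combining the hypothesis with \ref{BiBi+1comp}, every two distinct $B_i,B_j$ are complete to each other. By \ref{Dper}, $G[D]$ is $(K_2+K_1)$-free, hence complete multipartite with $\omega(G[D])\le\omega(G)-2$ parts; since $D$ is complete to $C$ and $G[C]$ is the complete multipartite graph $K_{2,2}$, the graph $G[C\cup D]$ is complete multipartite with at most $\omega(G)$ parts, in particular perfect, so $\chi(G[C\cup D])=\omega(G[C\cup D])\le\omega(G)$. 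Finally, invoking \cref{lem:BiBi+2emp} we may assume that no pair $B_i,B_{i+2}$ is entirely empty, so $B_1\cup B_3\ne\emptyset\ne B_2\cup B_4$; fix without loss of generality a vertex in $B_1$.

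Next I would establish the quantitative ingredients that make the target bound possible: (a) $\omega(G[B_i\cup D])\le\omega(G)-2$ for each $i$, by adjoining the edge $\{v_i,v_{i+1}\}$, which is complete to $B_i\cup D$, to a maximum clique; (b) for every $v\in B\cup T$, $N(v)\cap D$ is a clique of $G[D]$ by \ref{abtnbBD}, hence meets every part of $D$ in at most one vertex; (c) by \ref{Biclq}, $B_i$ is a clique whenever $B_{i+1}\ne\emptyset$, so at most one of $B_1,B_3$ and at most one of $B_2,B_4$ can fail to be a clique, and a non-clique $B_i$ forces $B_{i+1}=\emptyset$; (d) for $t\in T$, $N(t)\cap B$ is a clique, by \ref{abtnbBD} together with the cross-completeness of the $B_i$'s. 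I would then show, by forbidden-subgraph chasing around the $C_4$ together with a nonempty $B_i$ (and using that $D$ is complete multipartite), that $T$ is either anticomplete to $B$ or so tightly attached that one of $v_2,v_4$ is a nice vertex; the target of this step is to reduce to the case where $T$ can be placed, as a whole, into one stable colour class, e.g.\ $T\cup\{v_1,v_3\}$.

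Then I would split into cases by the number of nonempty $B_i$'s. When at most two of the $B_i$ are nonempty, or more generally when $\omega(G[B\cup D])\le\omega(G)-1$, I would show $G$ is a nice graph: take $S_1\supseteq T\cup\{v_1,v_3\}$, $S_2\supseteq\{v_2,v_4\}$, and $S_3$ a stable set meeting $B\cup D$ chosen — as in the proof of \cref{lem:BiBi+2emp} — so that $G-(S_1\cup S_2\cup S_3)$ drops into a single $G[B_i\cup D]$-type piece, whose clique number is $\le\omega(G)-2$ by (a). When $B$ is a clique and $\omega(G[B\cup D])\in\{\omega(G)-1,\omega(G)\}$ — which by (a) forces the witnessing clique to straddle two "opposite" or three consecutive $B_i$'s — I would instead bound $\chi(G)$ directly: colour $G[C\cup D]$ with $\le\omega(G)$ colours, one per part; note that for each $b\in B$ its neighbourhood in $C\cup D$ is a clique of $G[C\cup D]$ (its two incident $C$-vertices together with at most one vertex from each of at most $\omega(G)-2$ parts of $D$); and then extend the colouring over the clique $B$ and the stable set $T$ using at most three more colours via a Hall-type matching argument fed by the clique bound (a) and the control of $T$ from step (d).

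The hard part will be exactly this last coloured-extension step. The Hall condition for the clique $B$ amounts to showing $|B'|+|\mathcal{J}(B')|\le\omega(G)+1$ for every subclique $B'\subseteq B$, where $\mathcal{J}(B')$ is the set of parts of $D$ met by every vertex of $B'$; it goes through smoothly when all vertices of $B'$ meet each such part in the same vertex (one then finds a clique of size $|B'|+|\mathcal{J}(B')|$), but the obstacle is that distinct vertices of $B'$ may meet the same part of $D$ in distinct vertices, and the induced configuration this produces ($K_2\vee P_4$, or a wheel) is \emph{not} among the forbidden graphs. Overcoming this — by a finer analysis of these "inconsistent" configurations, by appealing to the $H_2$- and $H_3$-freeness, or by instead exhibiting a nice vertex or a comparable pair in the offending subcases — is where the real work of the lemma lies; everything else is bookkeeping once the structural facts above are in hand.
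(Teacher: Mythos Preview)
Your proposal is a plan, not a proof, and you explicitly flag that you do not know how to close the Hall-type extension step. That is indeed a genuine gap, but more importantly the whole detour is unnecessary: you have overlooked the single observation that collapses the problem.

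Under the hypothesis that $B_i$ is complete to $B_{i+2}$, apply \ref{prop-K23}:\ref{a1b1b3nbd}: any vertex of $B_i$ has at most one neighbour in $B_{i+2}$. Hence if $B_i$ and $B_{i+2}$ are both nonempty and complete to each other, then $|B_i|\le 1$ and $|B_{i+2}|\le 1$. Since (by non-comparability of $v_1,v_3$ and $v_2,v_4$, up to rotation) one may take $B_1,B_3\neq\emptyset$, this already gives $|B_1|=|B_3|=1$. Invoking \cref{lem:BiBi+2emp} one may also take $B_2\neq\emptyset$, whence by \cref{Biclq} every $B_i$ is a clique. Now there are only two tiny cases. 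If $B_4\neq\emptyset$ as well, then all four $|B_i|\le 1$, and the five stable sets $B_i\cup\{v_{i+2}\}$ ($i=1,\dots,4$) together with $T$ cover $V(G)\setminus D$, so $\chi(G)\le \chi(G[D])+5\le\omega(G)+3$ by \ref{Dper}. If $B_4=\emptyset$, take $S_1=B_1\cup\{v_3\}$, $S_2=B_3\cup\{v_1\}$, $S_3=T\cup\{v_2,v_4\}$; these are stable, and $G-(S_1\cup S_2\cup S_3)=G[B_2\cup D]$ has clique number at most $\omega(G)-2$ since $\{v_2,v_3\}$ is complete to $B_2\cup D$. Thus $G$ is nice.

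So the ``hard part'' you anticipated never arises: once the $B_i$'s have size at most one, there is no matching argument to run and no inconsistent configurations to chase. Your items (a)--(d) and the complete-multipartite description of $G[C\cup D]$ are correct but unused; the lemma is a half-page bookkeeping exercise after the $|B_i|\le 1$ observation.
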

\begin{proof}
 Since $N(v_i)=B_i\cup B_{i-1}\cup D\cup \{v_{i+1},v_{i-1}\}$, $v_i$ and $v_{i+2}$ are not comparable vertices, we may assume that $B_1$ and $B_3$ are nonempty. Then, by \ref{prop-K23}:\ref{a1b1b3nbd}, $|B_1|\leq 1$ and $|B_3|\leq 1$.  By \cref{lem:BiBi+2emp}, we may assume that $B_2\neq \es$. So by \cref{Biclq}, we may assume that $B_i$ is a clique, for each $i$. First suppose that $B_4\neq \es$. So, again by \ref{prop-K23}:\ref{a1b1b3nbd}, $|B_i|\leq 1$, for each $i$. For $i\in \{1,2,3,4\}$, we define $W_i:=B_{i}\cup \{v_{i+2}\}$ and $W_5:=T$. Then clearly $W_i$'s are stable sets, and hence, by \ref{Dper},  we have $\chi(G)\leq \omega(G)+3$. So we may assume that $B_4=\es$, and we define  $S_1:=B_1\cup \{v_3\}$, $S_2:=B_3\cup \{v_1\}$ and $S_3:=T\cup \{v_2,v_4\}$. Then $S_1, S_2$ and $S_3$ are three stable sets. Now if there  is a clique, say $Q\in G-(S_1\cup S_2\cup S_3)$ such that $|Q|>\omega(G)-2$, then  $Q\cap (B_2\cup D)\neq \es$, and then $Q\cup \{v_2,v_3\}$ is a clique of size $\omega(G)+1$, a contradiction. So $\omega(G-(S_1\cup S_2\cup S_3)) \leq \omega(G)-2$, and hence $G$ is a nice graph. Thus $G$ is a good graph.
\end{proof}

\begin{thm}\label{thm:c4}
	If $G$ is a ($P_2+P_3$, $\overline{P_2+P_3}$, $K_{2,3}$)-free graph that contains a $C_4$, then $G$ is a good graph.
\end{thm}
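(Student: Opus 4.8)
The plan is to peel off the small obstructions one at a time, invoking the partial results already established, and then to settle the single leftover case with the structural lemmas of this subsection. Since $G$ is $K_{2,3}$-free by hypothesis, \cref{thm:k23} is not needed. First, if $G$ contains a banner, then $G$ is good by \cref{thm:banner}. So assume $G$ is $(K_{2,3},\text{banner})$-free. If $G$ contains an $H_2$, pass to $\overline{G}$: it is $(P_2+P_3,\overline{P_2+P_3})$-free, and since $\overline{K_{2,3}}=K_2+K_3$ and $\overline{\text{banner}}=\text{co-banner}$ it is $(K_2+K_3,\text{co-banner})$-free, and it contains an $\overline{H_2}$; hence \cref{thm:5apple} applies with $\overline{G}$ in the role of its hypothesis graph and tells us that $\overline{\overline{G}}=G$ is good. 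Likewise, if $G$ is $(K_{2,3},\text{banner},H_2)$-free and contains an $H_3$, then $\overline{G}$ satisfies the hypotheses of \cref{thm:caseH3} (it is in addition $\overline{H_2}$-free and contains an $\overline{H_3}$), so again $G$ is good.

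This leaves the case where $G$ is $(P_2+P_3,\overline{P_2+P_3},K_{2,3},\text{banner},H_2,H_3)$-free and contains a $C_4$. If $G$ has a universal vertex or a pair of comparable vertices then it is good by definition, so assume it has neither. Then $K_{2,3}$-freeness gives $X=\es$ and banner-freeness gives $A=\es$, so $V(G)=B\cup C\cup D\cup T$, and we may freely use the properties of \cref{genprop} — in particular \ref{prop-K23}, which makes the edges between $B_i$ and $B_{i+2}$ a matching — together with \cref{final-Prop}, so that $B_i$ is complete to $B_{i+1}\cup B_{i-1}$ and $B_i$ is a clique whenever $B_{i+1}$ or $B_{i-1}$ is nonempty.

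Now I would split on the two opposite pairs $(B_1,B_3)$ and $(B_2,B_4)$. The absence of comparable vertices among $\{v_1,v_3\}$ and among $\{v_2,v_4\}$ forces that no two consecutive sets $B_i,B_{i+1}$ are simultaneously empty: if $B_i=B_{i-1}=\es$ then $N(v_i)=D\cup\{v_{i-1},v_{i+1}\}\subseteq N(v_{i+2})$, making $v_i$ and $v_{i+2}$ comparable. Hence either some opposite pair $B_i,B_{i+2}$ is entirely empty — in which case $G$ is good by \cref{lem:BiBi+2emp} — or at most one $B_i$ is empty. In the latter case, if every $B_i$ is complete to $B_{i+2}$ then $G$ is good by \cref{lem:BiBi+2com}; otherwise one argues directly, using that each $B_i$ is a clique (\cref{Biclq}), that $B_i$--$B_{i+2}$ is a matching (\ref{prop-K23}), and that consecutive $B_i$'s are complete to each other (\cref{BiBi+1comp}), to produce either five stable sets covering $V(G)\sm D$ — whence $\chi(G)\le\omega(G)+3$ by \ref{Dper} — or three stable sets witnessing that $G$ is nice.

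The hard part is precisely this last step: checking that \cref{lem:BiBi+2emp,lem:BiBi+2com}, supplemented by the short direct argument for the "neither opposite pair empty, not all opposite pairs complete" configurations, really do exhaust every arrangement of the $B_i$'s in the fully reduced graph. Everything else is bookkeeping — tracking the complementations so that each invoked theorem's conclusion about $\overline{G}$ becomes a statement about $G$, and verifying in each branch that the outcome lands in one of the cases $(a)$--$(e)$ defining a good graph.
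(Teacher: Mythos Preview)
Your reduction strategy is exactly the paper's: invoke \cref{thm:banner}, \cref{thm:5apple}, \cref{thm:caseH3} (the latter two via complementation, which you handle correctly) to assume $G$ is (banner, $H_2$, $H_3$)-free, observe $A=X=\es$, assume no universal or comparable vertices, and then appeal to \cref{lem:BiBi+2emp,lem:BiBi+2com}. Up to this point your proposal matches the paper.

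The gap is what you call the ``short direct argument'' for the residual configuration --- some $b_1\in B_1$, $b_3\in B_3$ with $b_1b_3\notin E(G)$ and a $b_2\in B_2$ complete to both. This is not short; it is the bulk of the paper's proof of \cref{thm:c4}. The difficulty is not the $B_i$'s but $D$: one must partition $D$ into six pieces $D_1,D_1',D_2,D_2',D_3,D_3'$ according to adjacency with $\{b_1,b_2,b_3\}$, prove size and adjacency constraints among them (e.g.\ $D_1\cup D_2'$ stable, $|D_3'|\le 1$), and then run a three-case analysis (on whether $D_1\cup D_1'$, $D_2\cup D_2'$ are empty) with several subclaims in each. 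Your sketch --- ``produce either five stable sets covering $V(G)\sm D$, or three stable sets witnessing that $G$ is nice'' --- does not engage with $D$ at all, and the properties you list (\cref{BiBi+1comp}, \cref{Biclq}, \ref{prop-K23}) are about $B$ only. In several subcases the paper needs to show that a carefully chosen $S_1$ meeting $D$ drops the clique number by two, and this requires \ref{bibi+2cmnnbd} and ad hoc arguments about how specific $D_j$'s interact with $B_1\cup B_3$. You have correctly located where the work is, but you have not done it, and nothing in your outline suggests the mechanism that actually makes it go through.
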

\begin{proof}
Let $G$ be a ($P_2+P_3$, $\overline{P_2+P_3}$, $K_{2,3}$)-free graph. Suppose that $G$ contains a $C_4$, say with vertex-set $C:=\{v_1,v_2,v_3,v_4\}$ and edge-set $\{v_1v_2,v_2v_3,v_3v_4,v_4v_1\}$.    We partition $V(G)\sm C$ as in Section~\ref{genprop},  and we use the properties in Section~\ref{genprop}. By Theorems~\ref{thm:banner}, \ref{thm:5apple} and \ref{thm:caseH3}, we may assume that $G$ is (banner, $H_2,H_3$)-free.  As earlier, since $G$ is $K_{2,3}$-free, $X=\es$, and since $G$ is banner-free, $A=\es$. We may assume that $G$ has   no universal   or a pair of comparable vertices, and we use the  \cref{final-Prop,lem:BiBi+2emp,lem:BiBi+2com}. Recall that, by \cref{BiBi+1comp}, for each $i\in \{1,2,3,4\}$, $B_i$ is complete to $B_{i+1}$. By \cref{lem:BiBi+2emp,lem:BiBi+2com}, we may assume that there are vertices $b_1\in B_1$, $b_2\in B_2$ and $b_3\in B_3$ such that $b_1b_2,b_2b_3\in E(G)$ and $b_1b_3\notin E(G)$. Then, by \cref{Biclq}, for each $i\in \{1,2,3,4\}$, $B_i$ is a clique.
If $D=\es$, then since $B_1\cup B_2\cup \{v_2\}$ and $B_3\cup B_4\cup \{v_4\}$ are cliques, $B\cup \{v_2,v_4\}$ induces a complement of a bipartite graph, and hence $\chi(G)\leq \chi(G[B\cup \{v_2,v_4\}])+\chi(T\cup \{v_1,v_3\}) \leq \omega(G)+1$. So we may assume that $D\neq \es$.
Now we claim the following:

\begin{claim}\label{C4b1bdcmnbdB2com}
	Any vertex in $D$ which is complete to $\{b_1,b_3\}$, is complete to $B_2\cup B_4$. Also, any vertex in $D$ which is anticomplete to $\{b_1,b_3\}$, is anticomplete to $B_2\cup B_4$.
\end{claim}

\vspace{-0.2cm}
\no{\it Proof of \cref{C4b1bdcmnbdB2com}}.~Let $d\in D$. If $db_1,db_3\in E(G)$, and there  is a vertex $b\in B_2$ (up to symmetry) such that $db\notin E(G)$, then $\{b_1,b,v_3,d,b_3\}$ induces a $\overline{P_2+P_3}$. If $db_1,db_3\notin E(G)$, and there  is a vertex $b'\in B_2$ (up to symmetry) such that $db'\in E(G)$, then $\{b_1,b',d,v_1,b_3\}$ induces a banner. $\Diamond$

Let $B':=\{b_1,b_2,b_3\}$.   To proceed further, we let:

\begin{tabular}{ll}
~~~$D_1:=\{d\in D\mid N(d)\cap B'=\{b_1\}\}$, &~~~$ D_1':=\{d\in D\mid N(d)\cap B'=\{b_3\}\}$,\\
~~~$D_2:=\{d\in D\mid N(d)\cap B'=\{b_1,b_2\}\}$, &~~~$ D_2':=\{d\in D\mid N(d)\cap B'=\{b_2,b_3\}\}$,\\
~~~$D_3:=\{d\in D\mid N(d)\cap B'=B'\}$, ~~ and  &~~~$ D_3':=\{d\in D\mid N(d)\cap B'=\es\}$.
   \end{tabular}

\no Then, by \cref{C4b1bdcmnbdB2com},  $D=\cup_{j=1}^3(D_j\cup D_j')$, and, by \ref{abtnbBD}, $D_1,D_1',D_2,D_2'$ and $D_3$ are cliques. Moreover:

\begin{claim}\label{D1D2stset}
 $D_1\cup D_2'$ is a stable set. Likewise, $D_2\cup  D_1'$ is a stable set.	
\end{claim}

\vspace{-0.2cm}
\no{\it Proof of \cref{D1D2stset}}.~If there are vertices, say $d,d'\in D_1$, then $\{b_1,b_2,v_3,d,d'\}$ induces a $\overline{P_2+P_3}$; so $|D_1|\leq 1$. By using a similar argument, we have $|D_2'|\leq 1$. If there are adjacent vertices, say $d\in D_1$ and $d'\in D_2'$, then $\{b_1,d,d',b_2,v_1\}$ induces a $\overline{P_2+P_3}$. So $D_1\cup D_2'$ is a stable set. Likewise, $D_2\cup  D_1'$ is also a  stable set. $\Diamond$

\begin{claim}\label{D4card}
	$| D_3'|\leq 1$.
\end{claim}

\vspace{-0.2cm}
\no{\it Proof of \cref{D4card}}.~If there are vertices, say $d_1,d_2\in  D_3'$, then, since $\{d_1,d_2,b_1,b_2,b_3\}$ does not induce a $P_2+P_3$, $d_1d_2\notin E(G)$, and then $\{b_1,b_2,d_1,v_4,d_2\}$ induces a $P_2+P_3$.   $\Diamond$

Now we prove the theorem in three cases as follows:

\smallskip
\no{\bf Case~1}~ {\it Suppose that   $D_1\cup  D_1'\neq \es$.}

 \smallskip
Up to symmetry, we may assume that there is a vertex, say $d\in D_1$. Then:

\begin{claim}\label{C4B2B4card}
	$|B_2|\leq 2$ and $|B_4|\leq 2$.
\end{claim}

\vspace{-0.2cm}
\no{\it Proof of \cref{C4B2B4card}}.~If there are vertices, say $b,b'\in B_2\sm \{b_2\}$, then since $\{b,b_3,v_4,d,b'\}$ does not induce a $\overline{P_2+P_3}$, we may assume that $bd\notin E(G)$, and then $\{b_1,b_2,v_3,d,b\}$ induces a $\overline{P_2+P_3}$; so $|B_2|\leq 2$. If there are vertices, say $p,q,r\in B_4$, then, since $\{b_3,v_3,d,p,q\}$ does not induce a  $\overline{P_2+P_3}$, we may assume that $pd\notin E(G)$, and then we get a contradiction as in the proof for $|B_2|\leq 2$.   $\Diamond$

So, by \ref{prop-K23}:\ref{a1b1b3nbd} and \cref{C4B2B4card}, we have  $\chi(G[B_2\cup B_4])\leq 2$.

\smallskip
\no{\bf Case~1.1}~ {\it Suppose that $B_1\sm\{b_1\}\neq \es$.}

Now we have the following:
\begin{claim}\label{C4B1B3D1chrom}
	$\chi(G[B_1\cup B_3\cup D_3\cup  D_3'])\leq \omega(G)-|B_2|-1$. Likewise, $\chi(G[B_1\cup B_3\cup D_3\cup D_3'])\leq \omega(G)-|B_4|-1$.
\end{claim}

\vspace{-0.2cm}
\no{\it Proof of \cref{C4B1B3D1chrom}}.~Since for any $b\in (B_1\sm\{b_1\})\cup (B_3\sm N(b_1))$, $\{b_1,b_2,v_3,d,b\}$ does not induce a  $\overline{P_2+P_3}$, $\{d\}$ is anticomplete to  $(B_1\sm\{b_1\})\cup (B_3\sm N(b_1))$. Thus, if there are nonadjacent vertices, say $b'\in B_1\sm\{b_1\}$ and $b''\in B_3\sm N(b_1)$, then $\{b_1,b_2,v_3,d,b',b''\}$ induces an $\overline{H_3}$; so $B_1\sm \{b_1\}$ is complete to $B_3\sm N(b_1)$.
Then since $b_3\in B_3\sm N(b_1)$, by \ref{prop-K23}:\ref{a1b1b3nbd}, $|B_1\sm \{b_1\}|=1$ (so $|B_1|=2$), and $B_3\sm N(b_1)=\{b_3\}$.

Hence, by \ref{bibi+2cmnnbd}, $D_3$ is complete to $B_1\sm \{b_1\}$. So, by \cref{C4b1bdcmnbdB2com}, $D_3\cup B_1\cup B_2\cup \{v_2\}$ is a clique. By  \cref{D4card}, $ D_3'\cup \{b_1,b_3\}$  is a stable set.  By \ref{prop-K23}:\ref{a1b1b3nbd}, $(B_1\sm \{b_1\})\cup (B_3\sm \{b_3\})$ is a stable set.
   So, $\chi(G[B_1\cup B_3\cup D_3\cup D_3'])\leq \omega(G[D_3])+\chi(G[ D_3'\cup \{b_1,b_3\}])+ \chi(G[(B_1\sm \{b_1\})\cup (B_3\sm \{b_3\})]) =(\omega(G[D_3\cup B_1\cup B_2\cup \{v_2\}])-|B_1|-|B_2|-1)+2\leq \omega(G)-2-|B_2|+1=\omega(G)-|B_2|-1$.  Likewise, $\chi(G[B_1\cup B_3\cup D_3])\leq \omega(G)-|B_4|-1$. This proves \cref{C4B1B3D1chrom}. $\Diamond$

   By \cref{D1D2stset},
$\chi(G[D_1\cup  D_1'\cup D_2\cup D_2'])\leq 2$. So by \cref{C4B1B3D1chrom}, $\chi(G[B_1\cup B_3\cup D])\leq \omega(G)-|B_2|+1$ and $\chi(G[B_1\cup B_3\cup D])\leq \omega(G)-|B_4|+1$.
Now if $|B_2|= 1$ and $|B_4|\leq 1$, then, by \ref{ATstset}, $B_2\cup\{v_1\}$, $B_4\cup \{v_3\}$ and $T\cup\{v_2,v_4\}$ are stable sets, and hence we conclude that $\chi(G)\leq \omega(G)+3$. So by \cref{C4B2B4card}, we may assume that either  $|B_2|=2$ or $|B_4|=2$. Then $\chi(G[B_1\cup B_3\cup D])\leq \omega(G)-1$. Recall that $\chi(G[B_2\cup B_4])\leq 2$. Since $T\cup \{v_2,v_4\}$ and $\{v_1,v_3\}$ are stable sets (by \ref{ATstset}), we get $\chi(G)\leq \omega(G)+3$.

\smallskip
\no{\bf Case~1.2}~ {\it Suppose that $B_1\sm \{b_1\}=\es$.}

\smallskip
If $B_3\sm \{b_3\}=\es$, then since $\{b_1,v_3\}$, $\{b_3,v_1\}$ and $T\cup \{v_2,v_4\}$ are stable sets (by \ref{ATstset}), and since $\chi(G[B_2\cup B_4])\leq 2$,  by \ref{Dper},  $\chi(G)\leq \chi(G-D)+\chi(G[D])\leq 5+(\omega(G)-2)=\omega(G)+3$. So we assume that $B_3\sm\{b_3\}\neq \es$. Then, we may assume that $D_1'=\es$. For, otherwise, if there is a vertex $d_1\in D_1'$, then $b_3d_1\in E(G)$, $b_1d_1,b_2d_1\notin E(G)$ and  $B_3\sm \{b_3\}\neq \es$, and thus this case is similar to that of Case~1.1. 	
	Also:

 \begin{claim}\label{B4card}
 We may assume that $|B_4|\leq 1$.
 \end{claim}

   \vspace{-0.2cm}
\no{\it Proof of \cref{B4card}}.~If there are vertices, say $b_4,b_4'\in B_4$, then by \ref{prop-K23}:\ref{a1b1b3nbd}, we may assume $b_2b_4\notin E(G)$, then since $\{b_1,b_2,b_3,b_4,d\}$ does not induce a banner, $b_4d\in E(G)$. Thus, we conclude that $b_3d\notin E(G)$, $b_2d\notin E(G)$, $b_2b_4\notin E(G)$, and $B_4\sm \{b_4\}\neq \es$, and again the case is similar to that of Case~1.1.  $\Diamond$

By \ref{ATstset} and \cref{B4card},  $\{b_1,v_4\}$, $B_4\cup \{v_2\}$ and $T\cup \{v_1,v_3\}$ are stable sets.  By \cref{D1D2stset},  $D_1\cup \{b_2\}$ is a stable set. By \cref{C4b1bdcmnbdB2com,D4card,C4B2B4card}, $(B_2\sm \{b_2\})\cup  D_3'$ is a stable set. Hence $\chi(G[B_1\cup B_2\cup B_4\cup D_1\cup  D_3'])\leq 5$.
Now $D\sm (D_1\cup  D_3')$ is complete to $\{b_2\}$. Then, by \ref{abtnbBD}, $D\sm (D_1\cup  D_3')$ is a clique. Thus $G[B_3\cup (D\sm (D_1\cup  D_3'))]$ is the complement of a bipartite graph, and hence a perfect graph. Since $\{v_3,v_4\}$ is complete to $B_3\cup (D\sm (D_1\cup  D_3'))$, $\chi(G[B_3\cup (D\sm (D_1\cup D_3'))])\leq \omega(G)-2$.
Hence $\chi(G)\leq \omega(G)+3$.

\smallskip
\no{\bf Case~2}~ {\it Suppose that $D_1\cup D_1'=\es$  and $D_2\cup D_2'\neq \es$.}

Up to symmetry, we may assume that there is a vertex, say $d\in D_2$. Then $b_1d,b_2d\in E(G)$, and $b_3d\notin E(G)$. Now we have the following:

\begin{claim}\label{B2b2}
We may assume that $B_2=\{b_2\}$.
\end{claim}

\vspace{-0.2cm}
\no{\it Proof of \cref{B2b2}}.~If there is a vertex, say $b_2'\in B_2\sm\{b_2\}$, then, since $\{b_2,b_3,v_4,d,b_2'\}$ does not induce a  $\overline{P_2+P_3}$, $b_2'd\notin E(G)$. Now, we see that there is a vertex  $d\in D$ such that $b_1d\in E(G)$, $b_2'd,b_3d\notin E(G)$, and the case is similar to that of Case~1.  $\Diamond$

\begin{claim}\label{B4compB2}We may assume $B_4$ is complete to $\{b_2\}$.\end{claim}
	
\vspace{-0.2cm}
\no{\it Proof of \cref{B4compB2}}.~Suppose there is a vertex, say $b_4\in B_4$ such that $b_2b_4\notin E(G)$.  Then since $\{b_1,b_2,b_3,b_4,d\}$ does not induce a  $\overline{P_2+P_3}$,  $b_4d\notin E(G)$.
 Now, we see that there is a vertex  $d\in D$ such that $b_2d\in E(G)$, $b_3d,b_4d\notin E(G)$, and the case is similar to that of Case~1.  $\Diamond$

 \begin{claim}\label{B1B3antcomD3}We may assume that $B_1\cup B_3$ is anticomplete to $ D_3'$.\end{claim}
	
\vspace{-0.2cm}
\no{\it Proof of \cref{B1B3antcomD3}}.~Suppose there are adjacent vertices, say $b\in B_1$ and $d\in  D_3'$. Then, by \ref{bibi+2cmnnbd}, $bb_3\notin E(G)$. Now, we see that there is a vertex  $d\in D$ such that $bd\in E(G)$, $b_2d,b_3d\notin E(G)$, and the case is similar to that of Case~1. So $B_1$ is anticomplete to $ D_3'$. Likewise, $B_3$ is anticomplete to $ D_3'$. $\Diamond$

\begin{claim}\label{dcomB1B3}
	$\{d\}$ is complete to $(B_1\sm N(b_3))\cup (B_3\sm \{b_3\})\cup (D\sm \{d\})$.
\end{claim}

\vspace{-.2cm}
\no{\it Proof of \cref{dcomB1B3}}.~Suppose there is a vertex, say $p\in (B_1\sm N(b_3))\cup (B_3\sm \{b_3\})\cup (D\sm \{d\})$ such that $pd\notin E(G)$. If $p\in B_1\sm N(b_3)$, then $\{p,b_2,d,v_1,b_3\}$ induces a banner.
If  $p\in B_3\sm \{b_3\}$,  then $\{b_2,b_3,v_4,d,p\}$ induces a  $\overline{P_2+P_3}$. Since $\{b_2\}$ is complete to $D\sm  D_3'$, by \ref{abtnbBD}, $D\sm  D_3'$ is a clique; so $\{d\}$  is complete to $D\sm  D_3'$. So, we see that $p\in D_3'$. But then $\{d,v_1,p,v_3,b_3\}$ induces a banner.
This proves \cref{dcomB1B3}. $\Diamond$

First suppose that $B_4\neq \es$. By \ref{prop-K23}:\ref{a1b1b3nbd} and \cref{B4compB2}, $|B_4|=1$. By \ref{bibi+2cmnnbd} and \cref{B4compB2}, $B_4$ is complete to $D\sm  D_3'$. Now we define three stable sets, namely $S_1:=\{b_2,v_4\}$, $S_2:=B_4\cup \{v_2\}$ and $S_3:=T\cup \{v_1,v_3\}$. We claim that for any maximum clique $Q$ in $G- (S_1\cup S_2\cup S_3)$,  $|Q|\leq \omega(G)-2$. Suppose not, and let $K$ be a maximum clique  in $G- (S_1\cup S_2\cup S_3)$ such that $|K|\geq \omega(G)-1$. Since  for $j\in\{1,3\}$, $\{v_j,v_{j+1}\}$ is complete to $B_j\cup D$, we may assume that $K\cap B_1\neq \es$ and $K\cap B_3\neq \es$.  Then, by \cref{B1B3antcomD3}, $K\cap D_3'=\es$. Since $B_4\cup \{b_2\}$ is complete to $B_1\cup B_3\cup (D\sm  D_3')$, we see that $K\cup B_4\cup \{b_2\}$ is a clique of size $\omega(G)+1$, a contradiction. Hence $G$ is nice. So $\omega(G- (S_1\cup S_2\cup S_3)\leq \omega(G)-2$, and hence $G$ is a nice graph.
So we may assume that $B_4=\es$. Now we define three stable sets $S_1:=\{b_3,d\}$, $S_2:=\{b_2,v_1\}$, and $S_3:=T\cup \{v_2,v_4\}$, and we claim the following:

\begin{claim}\label{C4p2B2Dcom}
	For any maximum clique $Q$ in $G-(S_1\cup S_2\cup S_3)$, we have $|Q|\leq \omega(G)-2$.
\end{claim}

\vspace{-0.2cm}
\no{\it Proof of \cref{C4p2B2Dcom}}.~Suppose not, and let $K$ be a maximum clique in $G- (S_1\cup S_2\cup S_3)$ such that  $|K|\geq \omega(G)-1$. If   $K\cap B_1=\es$, then, by \cref{dcomB1B3}, $K\cup \{v_4,d\}$ is a clique of size at least $\omega(G)+1$, a contradiction; so $K\cap B_1\neq \es$. If $K\cap B_3\neq \es$, then $K\cap B_3$ is complete to $\{d\}$ (by \cref{dcomB1B3}) and $K\cap B_1$ is complete to $\{d\}$ (by \ref{bibi+2cmnnbd}), and then $K\cup\{b_2,d\}$ is a clique of size at least $\omega(G)+1$, a contradiction; so $K\cap B_3=\es$. Then $|K\cup \{v_1,v_2\}|\geq \omega(G)+1$, a contradiction.
	This proves \cref{C4p2B2Dcom}. $\Diamond$

Hence, by \cref{C4p2B2Dcom},  $G$ is a good graph.

\smallskip
\no{\bf Case~3}~ {\it Suppose that $D_1\cup D_1' \cup D_2\cup D_2'=\es$.}

 \smallskip
Suppose that there are nonadjacent vertices, say $b\in B_1$ and $d\in D_3$, then, by \ref{prop-K23}:\ref{a1b1b3nbd}, $bb_3\notin E(G)$. Now, we see that there is a vertex  $d\in D$ such that $b_2d,b_3d\in E(G)$, $bd\notin E(G)$, and the case is similar to that of Case~2.  So, $B_1$ is complete to $D_3$. Likewise, $B_3$ is complete to $D_3$.  Since $B_1\cup B_2$ and $B_3\cup B_4$ are cliques, $G[B]$ induces the complement of a bipartite graph.  Also, by \cref{C4b1bdcmnbdB2com}, $D_3$ is complete to $B$. So, $G[B\cup D_3]$ induces a perfect graph. Since, by \ref{ATstset} and \cref{D4card}, $ D_3'$, $\{v_2,v_4\}$ and $T\cup \{v_1,v_3\}$ are stable sets, we conclude that $\chi(G)\leq \omega(G)+3$. \end{proof}

\medskip
\no{\bf Proof of \cref{thm:structure}}. The proof  follows from  Theorem~\ref{thm:k23}  and Theorem~\ref{thm:c4}. \hfill{$\Box$}

\section{Proof of \cref{thm:p2p3-bnd}}\label{sec:col}

In this section, we give a proof of \cref{thm:p2p3-bnd}. We will use the following lemmas.

 \begin{lemma}\label{lem:pec}
  If $G$ is a nice graph, then  $\chi(G)\leq \lfloor\frac{3}{2} \omega(G) \rfloor-1$.
 \end{lemma}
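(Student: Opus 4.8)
The plan is to prove the inequality by induction on $\omega(G)$, run jointly with the induction that establishes \cref{thm:p2p3-bnd}, so that for any $(P_2+P_3,\overline{P_2+P_3})$-free graph $H$ with $\omega(H)<\omega(G)$ we may use $\chi(H)\le\max\{\omega(H)+3,\lfloor\frac{3}{2}\omega(H)\rfloor-1\}$. Put $k:=\omega(G)$, fix pairwise disjoint stable sets $S_1,S_2,S_3$ witnessing that $G$ is nice, and let $W:=V(G)\setminus(S_1\cup S_2\cup S_3)$, so that $\omega(G[W])\le k-2$. The one coloring step is the trivial estimate $\chi(G)\le\chi(G[W])+3$ (color $G[W]$ optimally, then use a fresh color on each $S_i$), so the whole problem reduces to bounding $\chi(G[W])$.

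For $k\ge 10$ this finishes at once. The graph $G[W]$ is again $(P_2+P_3,\overline{P_2+P_3})$-free with $\omega(G[W])\le k-2$. If $\omega(G[W])\le 2$ then $\chi(G[W])\le 4$ — by Randerath et al.~\cite{RST02} when $\omega(G[W])=2$, trivially otherwise — giving $\chi(G)\le 7\le\lfloor\frac{3}{2}k\rfloor-1$. If $\omega(G[W])\ge 3$, then by the inductive hypothesis and the monotonicity of $x\mapsto\max\{x+3,\lfloor\frac{3}{2}x\rfloor-1\}$,
\[
\chi(G[W])\le\max\{\omega(G[W])+3,\ \lfloor\tfrac{3}{2}\omega(G[W])\rfloor-1\}\le\max\{k+1,\ \lfloor\tfrac{3}{2}k\rfloor-4\}=\lfloor\tfrac{3}{2}k\rfloor-4,
\]
where the last equality holds because $\lfloor\frac{3}{2}k\rfloor-4\ge k+1$ precisely when $k\ge 10$; adding $3$ yields $\chi(G)\le\lfloor\frac{3}{2}k\rfloor-1$.

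The cases $3\le k\le 9$ form the main obstacle. Here $\omega(G[W])\le k-2\le 7$ is too small for the inductive hypothesis to outrun the target: the bound $\chi(G)\le\chi(G[W])+3$ only gives something of order $k+4$, which overshoots $\lfloor\frac{3}{2}k\rfloor-1$ by a bounded amount, and no appeal to larger $\omega$ is available. Closing this gap needs structural information beyond niceness, and indeed \cref{lem:pec} is invoked only inside \cref{thm:structure}, where $G$ contains a $C_4$ and, in every subcase that produces a nice graph, avoids further induced subgraphs. The plan for these finitely many $k$ is to use those extra excluded subgraphs together with the small value of $\omega(G[W])$ to show that $G[W]$ is perfect; then $\chi(G[W])=\omega(G[W])\le k-2$, so $\chi(G)\le k+1\le\lfloor\frac{3}{2}k\rfloor-1$ for every $k\ge 4$. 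The case $k=3$ is the only remaining one, and since then $\omega(G[W])\le 1$ and the above yields merely $\chi(G)\le 4$, it must be disposed of by a direct argument on the $C_4$-partition of Section~\ref{genprop} — noting that for $\omega(G)=3$ the sets $D$, $A_i\cup T$, $B_i$ and $X_j$ are all stable and interact very restrictively — showing that such a nice graph is $3$-colorable.
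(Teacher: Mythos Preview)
The paper's proof is nothing like yours: it is a three-line self-referential induction on $|V(G)|$, writing $\chi(G)\le\chi(G-S)+3\le(\lfloor\tfrac32\omega(G-S)\rfloor-1)+3\le\lfloor\tfrac32\omega(G)\rfloor-1$ and invoking the lemma's own inductive hypothesis on $G-S$, with no case split on $\omega(G)$ and no appeal to \cref{thm:p2p3-bnd}. You are right to be uneasy here --- the paper never checks that $G-S$ is again nice, and in fact the statement as written is false in isolation: $C_5$ is nice (three disjoint stable sets cover it, so $\omega(C_5-S)=0\le\omega(C_5)-2$), yet $\chi(C_5)=3>2=\lfloor\tfrac32\cdot2\rfloor-1$. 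The paper's argument is really only meaningful as one step inside a joint induction establishing the $\max$-bound of \cref{thm:p2p3-bnd}, which is exactly the framework you set up.

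But your proposal does not close the argument either. For $3\le k\le 9$ you offer only a plan: show that $G[W]$ is perfect by exploiting the extra forbidden subgraphs available in whichever subcase of \cref{thm:structure} declared $G$ nice, and dispose of $k=3$ ``by a direct argument on the $C_4$-partition''. Neither step is carried out. Turning the perfection claim into a proof would mean revisiting every place in \cref{thm:banner}, \cref{lem:BiBi+2emp}, \cref{lem:BiBi+2com}, and \cref{thm:c4} where $G$ is declared nice, identifying $W$ explicitly, and proving perfection case by case --- and you give no evidence this is even true (in \cref{lem:BiBi+2emp}, for instance, $W$ sits inside $B_1\cup B_3\cup D$ with no perfection asserted anywhere). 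Your $k\ge10$ argument is correct, but the finitely many small-$k$ cases are precisely where the difficulty lives, and as written you have not resolved them; the proposal is a sketch rather than a proof.
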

  \begin{proof}  Let $G$ be a nice graph. Then $G$ has three pairwise disjoint stable sets, say $S_1,S_2$ and $S_3$, such that $\omega(G-(S_1\cup S_2\cup S_3))\leq \omega(G)-2$. Let $S:=S_1\cup S_2\cup S_3$.  We prove the lemma by induction on $|V(G)|$.   Since $\chi(G)\leq \chi(G-S)+\chi(G[S])$, by induction hypothesis, $\chi(G) \leq  (\lfloor\frac{3}{2}\omega(G-S) \rfloor-1)+3 \leq  (\lfloor\frac{3}{2} (\omega(G)-2) \rfloor-1) +3 \leq \lfloor\frac{3}{2} \omega(G) \rfloor-1$.
  \end{proof}

  \begin{lemma}\label{lem:col:nice-ver}
  If $G$ has a nice vertex, then $\chi(G)\leq \omega(G)+3$.
  \end{lemma}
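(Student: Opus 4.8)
The statement to prove is Lemma~\ref{lem:col:nice-ver}: if $G$ has a nice vertex, then $\chi(G)\leq \omega(G)+3$. Recall that a nice vertex $v$ satisfies $d_G(v)\leq \omega(G)+2$. The natural approach is induction on $|V(G)|$ combined with greedy colouring. First I would dispose of the trivial case where $G$ is not $(P_2+P_3,\overline{P_2+P_3})$-free or does not meet the hypotheses of the ambient structural results; but in fact the cleanest route is to prove the statement for all graphs possessing a vertex of degree at most $\omega(G)+2$, because the greedy argument does not actually need the forbidden-subgraph hypotheses once we know one such vertex exists --- however, the subtlety is that deleting $v$ may lower $\omega$, so we must be careful about which $\chi$-bound we are allowed to invoke inductively.

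The key step: let $v$ be a nice vertex of $G$, and consider $G' := G - v$. If $\omega(G') = \omega(G)$, then by induction $\chi(G')\leq \omega(G')+3 = \omega(G)+3$; since $d_G(v)\leq \omega(G)+2 < \omega(G)+3 \geq \chi(G')+$ wait --- this needs the colour-count to exceed the degree, so we want $\chi(G') \geq d_G(v)+1$ is not guaranteed. The correct greedy bound is: $v$ can be coloured with one of the colours in a palette of size $\max\{\chi(G'), d_G(v)+1\}$, so $\chi(G)\leq \max\{\chi(G'),\,d_G(v)+1\}\leq \max\{\omega(G)+3,\,\omega(G)+3\} = \omega(G)+3$. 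The remaining case is $\omega(G') = \omega(G)-1$, i.e.\ $v$ lies in every maximum clique. Then by induction $\chi(G')\leq \omega(G')+3 = \omega(G)+2$, and $\chi(G)\leq \max\{\chi(G'),\,d_G(v)+1\}\leq \max\{\omega(G)+2,\,\omega(G)+3\} = \omega(G)+3$ again. So both cases close immediately, and actually the hypothesis $\omega(G')\geq\omega(G)-1$ always holds. The base case ($|V(G)|=1$, or more conveniently $\omega(G)\leq 1$) is immediate since then $\chi(G)\leq 1\leq\omega(G)+3$.

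**The main obstacle.** The only delicate point is making the induction hypothesis available: we are inducting on $|V(G)|$ within the class of graphs for which the conclusion $\chi\leq\omega+3$ is what we want, and $G'=G-v$ need not have a nice vertex, so we cannot directly reapply the lemma to $G'$. The resolution is that we are not proving a statement \emph{about graphs with a nice vertex} by induction, but rather we need the \emph{unconditional} bound $\chi(G')\leq\omega(G')+3$ for the smaller graph. This is precisely one of the outcomes collected in the definition of a \emph{good} graph (clause $(e)$), and \cref{thm:structure} (together with \cref{thm:k23}, handling the $C_4$-containing case, and a separate routine treatment of $C_4$-free, hence $(P_2+P_3)$-free-complement-free graphs, i.e.\ perfect-adjacent cases) will ultimately give that every $(P_2+P_3,\overline{P_2+P_3})$-free graph satisfies $\chi\leq\omega+3$ or $\chi\leq\lfloor\tfrac32\omega\rfloor-1$. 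So in the final assembly this lemma is really a one-line greedy observation: if $v$ is a nice vertex then removing it and colouring greedily costs at most $\max\{\chi(G-v),\,d_G(v)+1\}$ colours, and since $d_G(v)+1\leq\omega(G)+3$, it suffices that $\chi(G-v)\leq\omega(G)+3$. I would therefore phrase the proof as: colour $G-v$, then extend to $v$ using the degree bound, citing whichever colouring statement for $G-v$ is in force at the point of application --- concretely, the bound $\chi(G)\leq\max\{\chi(G-v),\,d_G(v)+1\}$ holds for every graph and every vertex, so the lemma reduces to $d_G(v)\leq\omega(G)+2$, which is the definition of a nice vertex. I expect this lemma's proof to be two or three lines; the real work lies elsewhere in \cref{sec:col}.
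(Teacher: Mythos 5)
Your proof is correct and is essentially the paper's: the paper's entire argument for this lemma is the same greedy extension --- take a colouring of $G-u$ and give $u$ a colour (possibly new) that is missing from its at most $\omega(G)+2$ neighbours, so that $\chi(G)\le\max\{\chi(G-u),\,\omega(G)+3\}$. Your concern about where the bound $\chi(G-u)\le\omega(G)+3$ comes from is well placed: the paper's proof invokes ``induction on $|V(G)|$'' even though $G-u$ need not have a nice vertex (and the statement is false for arbitrary graphs, e.g.\ an isolated vertex added to a triangle-free graph of large chromatic number), so the lemma is only sound as part of the simultaneous induction running through \cref{sec:col}; your diagnosis and resolution of this point is the right one.
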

  \begin{proof}
  Suppose that $G$ has a nice vertex, say $u$. We prove the lemma by induction on $|V(G)|$.  Now, since $d_G(u)\leq \omega(G)+2$, we can
take any $\chi(G)$-coloring of $G- u$ and extend it to a
$\chi(G)$-coloring of $G$, using for $u$ a
 color (possibly new) that does not appear in $N_G(u)$.
  \end{proof}

 \begin{lemma}\label{lem:col-nice}
If $G$ is a good graph, then $\chi(G)\leq \max \{\omega(G)+3, \lfloor\frac{3}{2} \omega(G) \rfloor-1 \}$.
\end{lemma}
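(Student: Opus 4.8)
The plan is to prove the inequality by induction on $|V(G)|$, splitting according to which of the five conditions (a)--(e) in the definition of a \emph{good} graph is witnessed by $G$. Throughout this section the graphs are $(P_2+P_3,\overline{P_2+P_3})$-free, so I will also use the remark that $\chi(H)\le 4\le\omega(H)+3\le\max\{\omega(H)+3,\lfloor\tfrac{3}{2}\omega(H)\rfloor-1\}$ whenever $\omega(H)\le 2$, by the theorem of Randerath et al.; this supplies the base of the induction.

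Three of the five cases are immediate, since in each of them $\chi(G)$ is already bounded by one of the two quantities inside the maximum: if (e) holds then $\chi(G)\le\omega(G)+3$; if (c) holds then $\chi(G)\le\omega(G)+3$ by \cref{lem:col:nice-ver}; and if (d) holds then $\chi(G)\le\lfloor\tfrac{3}{2}\omega(G)\rfloor-1$ by \cref{lem:pec}. So it remains to treat (a) and (b), in both of which I will pass to a smaller graph.

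Suppose (b) holds, say $u$ is a universal vertex of $G$. Then $\chi(G)=\chi(G-u)+1$ and $\omega(G)=\omega(G-u)+1$, and $G-u$ is again $(P_2+P_3,\overline{P_2+P_3})$-free and strictly smaller, so $\chi(G-u)\le\max\{\omega(G-u)+3,\lfloor\tfrac{3}{2}\omega(G-u)\rfloor-1\}$ — by the induction hypothesis when $\omega(G-u)\ge3$ (re-invoking \cref{thm:structure} to see that $G-u$ is again good if it still contains a $C_4$, the $C_4$-free members of the class being handled separately), and by the remark above when $\omega(G-u)\le2$. Adding $1$ and substituting $\omega(G-u)=\omega(G)-1$ gives $\chi(G)\le\max\{\omega(G)+3,\ \lfloor\tfrac{3\omega(G)-3}{2}\rfloor\}\le\max\{\omega(G)+3,\ \lfloor\tfrac{3}{2}\omega(G)\rfloor-1\}$, using $\lfloor\tfrac{3\omega-3}{2}\rfloor\le\lfloor\tfrac{3\omega-2}{2}\rfloor=\lfloor\tfrac{3}{2}\omega\rfloor-1$. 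Now suppose (a) holds, say $u,v$ are nonadjacent with $N(u)\subseteq N(v)$. Colour $G-u$ optimally and give $u$ the colour of $v$; this is proper, because every neighbour of $u$ lies in $N(v)$ and so avoids the colour of $v$, whence $\chi(G)=\chi(G-u)$. Also $\omega(G)=\omega(G-u)$, since any clique of $G$ through $u$ remains a clique of the same size when $u$ is replaced by $v$. Applying the bound to $G-u$ exactly as in case (b) gives $\chi(G)=\chi(G-u)\le\max\{\omega(G)+3,\lfloor\tfrac{3}{2}\omega(G)\rfloor-1\}$.

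The only step that is not bookkeeping is justifying the reduction in cases (a) and (b): one must know that the smaller graph $G-u$ still satisfies the target inequality. As $G-u$ again lies in the hereditary class under study, this follows by combining the induction hypothesis with \cref{thm:structure} (which re-certifies $G-u$ as good once it contains a $C_4$) and a separate elementary treatment of the $C_4$-free members of the class (and of clique number at most $2$). With that in hand, the proof comes down to the two one-vertex colour-extension arguments above together with the floor inequality $\lfloor\tfrac{3\omega-3}{2}\rfloor\le\lfloor\tfrac{3}{2}\omega\rfloor-1$.
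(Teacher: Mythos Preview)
Your proof is correct and takes the same route as the paper: induction on $|V(G)|$, with cases (c), (d), (e) dispatched via \cref{lem:col:nice-ver}, \cref{lem:pec}, and the definition, and cases (a), (b) handled by deleting a vertex and applying the bound to $G-u$. Your explicit justification that $G-u$ again satisfies the bound (via \cref{thm:structure}, \cref{thm:p2p3c4}, and the $\omega\le 2$ remark) makes precise what the paper leaves implicit when it simply writes ``by induction hypothesis''; in effect the paper treats \cref{lem:pec,lem:col:nice-ver,lem:col-nice} as steps inside one global induction for \cref{thm:p2p3-bnd}, and your write-up spells that structure out.
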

\begin{proof}
Let $G$ be a good graph. If $\chi(G)\leq \omega(G)+3$ or  $G$ is a nice graph or if $G$ has a nice vertex, then, by \cref{lem:pec,lem:col:nice-ver},  we are done.   So we may assume that either $G$ has a universal vertex or $G$ has a pair of comparable vertices.   Now we prove the lemma by induction on $|V(G)|$.
If $G$ has a universal vertex, say $u$, then $\omega(G-u)=\omega(G)-1$, and then $\chi(G)= \chi(G-u)+1\leq \max \{\omega(G-u)+3, \lfloor\frac{3}{2} \omega(G-u) \rfloor-1 \}+1 \leq \max \{\omega(G)+3, \lfloor\frac{3}{2} \omega(G) \rfloor-1\}$, and we are done. Next, if $G$ has a pair of comparable vertices, say $u$ and $v$, such that $N(u)\subseteq N(v)$ (say), then $\chi(G) =\chi(G-u)$ and
$\omega(G) = \omega(G-u)$; so we can
take any $\chi(G)$-coloring of $G- u$ and extend it to a
$\chi(G)$-coloring of $G$, using for $u$ the color of $v$, and we conclude the proof.
\end{proof}

  We will also use the following result.
\begin{thm}[\cite{KMa18}]\label{thm:p2p3c4}
If $G$ is a ($P_6$, $C_4$)-free graph, then $\chi(G)\leq  \lceil\frac{5}{4} \omega(G) \rceil$. Moreover, the bound is tight.
\end{thm}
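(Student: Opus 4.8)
The plan is to reduce the problem to a structural dichotomy — either $G$ is perfect, or $G$ is essentially a blow-up of a $5$-cycle — and to read off the bound in each case. \emph{First I would decompose $G$ by clique cutsets.} If $K$ is a clique cutset separating $V(G)$ into nonempty parts $V_1,V_2$, set $G_i=G[V_i\cup K]$; then $\omega(G)=\max_i\omega(G_i)$ and $\chi(G)=\max_i\chi(G_i)$ (take optimal colourings of the $G_i$ and permute colours so they agree on $K$). Since each $G_i$ is again $(P_6,C_4)$-free, induction on $|V(G)|$ lets me assume that $G$ has no clique cutset; I would also delete one of any two comparable vertices $u,v$ with $N(u)\subseteq N(v)$, as this changes neither $\chi$ nor $\omega$.

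\emph{Next I would dispose of the perfect case.} A $(P_6,C_4)$-free graph with no induced $C_5$ is perfect: every odd hole of length at least $7$ contains an induced $P_6$, so $C_5$ is the only possible odd hole; dually $\overline{C_5}=C_5$ while every antihole $\overline{C_\ell}$ with $\ell\ge 7$ contains an induced $C_4$, so $C_5$ is also the only possible odd antihole. Hence a $(P_6,C_4,C_5)$-free graph has neither an odd hole nor an odd antihole and is perfect by the Strong Perfect Graph Theorem, giving $\chi(G)=\omega(G)\le\lceil\tfrac54\omega(G)\rceil$. It remains to treat the case that $G$ contains an induced $C_5$, say on $v_1,\dots,v_5$ in cyclic order.

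\emph{The main work, and the step I expect to be hardest,} is to show that after the above reductions $G$ is a blow-up $C_5[K_{n_1},\dots,K_{n_5}]$. I would partition $V(G)\setminus\{v_1,\dots,v_5\}$ by the trace $N(u)\cap\{v_1,\dots,v_5\}$ of each outside vertex $u$. A short check using $C_4$-freeness shows that whenever $u$ is adjacent to two vertices of the $C_5$ at distance two it must also be adjacent to the vertex between them; consequently the only admissible traces are $\varnothing$, a single vertex, two consecutive vertices, three consecutive vertices, or all five vertices (in particular no trace has exactly four vertices). I would then invoke $P_6$-freeness to show that the $C_5$ dominates $G$ — a vertex with empty trace joined to the $C_5$ by a short path creates an induced $P_6$ — and that the traces assemble the graph into five parts $V_1,\dots,V_5$ with $V_i$ complete to $V_{i\pm1}$ and anticomplete to $V_{i\pm2}$; the absence of clique cutsets and of comparable vertices then forces each $V_i$ to be a clique. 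The delicate points are the vertices with a large trace (three consecutive vertices, or the full $C_5$) and the interaction between different induced $C_5$'s, which is where the bulk of the case analysis lives and which I regard as the principal obstacle.

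\emph{Finally I would colour the blow-up.} For $G=C_5[K_{n_1},\dots,K_{n_5}]$ with $n=\sum_i n_i$, any stable set meets at most two (necessarily nonconsecutive) parts and takes one vertex from each, so $\alpha(G)=2$ and $\chi(G)\ge n/2$, while $\omega(G)=\max_i(n_i+n_{i+1})$. Since $2n=\sum_i(n_i+n_{i+1})\le 5\,\omega(G)$, we get $\lceil n/2\rceil\le\lceil\tfrac54\omega(G)\rceil$; as a $C_5$-blow-up admits an optimal colouring with $\max\{\omega(G),\lceil n/2\rceil\}$ colours, this yields $\chi(G)\le\lceil\tfrac54\omega(G)\rceil$. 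Tightness is witnessed by $C_5[K_t]$, which is $(P_6,C_4)$-free with $\omega=2t$ and $\chi=\lceil 5t/2\rceil=\lceil\tfrac54\omega\rceil$.
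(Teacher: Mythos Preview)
The paper does not prove this theorem at all; it is quoted verbatim from Karthick and Maffray~\cite{KMa18} and is merely invoked as a black box in the proof of \cref{thm:p2p3-bnd}. So there is nothing to compare your argument against here, and your task is really to reconstruct a nontrivial external result.

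Unfortunately your proposed reconstruction has a genuine gap. The structural dichotomy you aim for --- ``after removing clique cutsets and comparable vertices, a $(P_6,C_4)$-free graph containing a $C_5$ is a blow-up $C_5[K_{n_1},\dots,K_{n_5}]$'' --- is false. The $5$-wheel $W_5$ (a $C_5$ together with a universal vertex) is $(P_6,C_4)$-free, contains an induced $C_5$, has no clique cutset, and has no pair of comparable vertices (your notion requires nonadjacency, so a universal vertex is never comparable to anything). Yet $W_5$ is not a $C_5$ blow-up, because in any $C_5[K_{n_1},\dots,K_{n_5}]$ with all $n_i\ge 1$ every vertex has a nonneighbour. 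More generally, taking any $C_5$ blow-up and joining a universal vertex (or a clique of universal vertices) keeps the graph $(P_6,C_4)$-free and atom-like but destroys the blow-up structure. Your case analysis explicitly allows the ``all five vertices'' trace, and that is precisely where the argument breaks: such vertices cannot be absorbed into any $V_i$.

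The actual proof in~\cite{KMa18} is substantially more elaborate; the atoms of $(P_6,C_4)$-free graphs admit a richer structure than pure $C_5$ blow-ups, and the $\lceil \tfrac{5}{4}\omega\rceil$ bound is obtained via a finer structural description together with a tailored colouring argument. If you want to repair your approach, you would at minimum need an additional reduction that peels off universal (or near-universal) vertices before asserting the blow-up structure, and even then further configurations arise that require separate treatment.
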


\smallskip

\no{\bf Proof of \cref{thm:p2p3-bnd}}. Let $G$ be a ($P_2+ P_3$, $\overline{P_2+ P_3}$)-free graph. By \cref{thm:p2p3c4}, since $P_2+P_3$ is an induced subgraph of $P_6$, we may assume that $G$ contains a $C_4$. Then, by \cref{thm:structure}, $G$ is a good graph, and hence the proof follows from  \cref{lem:col-nice}. $\hfill{\Box}$

\bigskip
\no{\bf Acknowledgement}. We thank Professor S.\,A.\,Choudum for his valuable suggestions.

{\small

}

\end{document}